\documentclass[11pt,reqno,b5paper]{amsproc}
\usepackage{color,anysize,float}
\usepackage{citeref}
\usepackage[noadjust]{cite}

\usepackage{enumerate}

\usepackage[colorlinks,urlcolor=blue,citecolor=blue,linkcolor=blue]{hyperref}

\usepackage{graphicx}%

\marginsize{15mm}{15mm}{1cm}{1cm}
\renewcommand{\bibitempages}[1]{}

\newtheorem{thm}{\sc Theorem}[section]

\newtheorem{lem}[thm]{\sc Lemma}
\newtheorem{prop}[thm]{\sc Proposition}
\newtheorem{remark}{Remark}[section]
\theoremstyle{definition}

\theoremstyle{remark}

\numberwithin{equation}{section}
\allowdisplaybreaks
\everymath{\displaystyle}

\begin{document}
	
	\title[
	Schrödinger--Bopp--Podolsky System with Indefinite Potential]
	{On the Schrödinger--Bopp--Podolsky system with indefinite potential: ground states, multiplicity and exponential decay}
	\author[T. Xiao, F. Wang, L.-F. Yin]{Ting Xiao\,$^\mathrm{a}$, Fan Wang\,$^\mathrm{a}$, Li-Feng Yin\,$^\mathrm{b*}$\vspace{-1em}}
	\dedicatory{$^\mathrm{a}$Department of Mathematics, Southwest Jiaotong University\\
		Chengdu, 611756, China \\$^\mathrm{b*}$School of Mathematical Sciences, Sichuan Normal University\\
		Chengdu, 610066, China}
	\thanks{$^*$Corresponding author. \\
		$^\dag$Emails: \tt 19960394045@163.com; wangf767@swjtu.edu.cn; yin136823@163.com}
	\maketitle
	\begin{abstract}
		In this paper, we study the Schrödinger--Bopp--Podolsky system
		\begin{equation*}
			\begin{cases}
				-\Delta u + V(x)u + \phi u = f(x,u), & \text{in } \mathbb{R}^3, \\
				-\Delta \phi + a^2 \Delta^2 \phi = 4\pi u^2, & \text{in } \mathbb{R}^3.
			\end{cases}
		\end{equation*}
		We consider the case where the potential \(V\) is indefinite so that the Schrödinger operator \(-\Delta + V\) has a finite-dimensional negative space. Under suitable assumptions on the potential \(V\) and nonlinearity $f(x,u)$, we prove the existence of nontrivial solutions via a local linking argument and Morse theory. Moreover, these solutions are shown to decay exponentially at infinity. Additionally, a ground state solution is obtained by minimization techniques. Finally, if \(f(x,u)\) is odd with respect to \(u\), we obtain an unbounded sequence of solutions using the symmetric mountain pass theorem.\\
		
	\noindent{\bf Keywords:} Schrödinger--Bopp--Podolsky system, indefinite potential, local linking argument, Morse theory\\
	{\bf Mathematics Subject Classification:} 35A15, 35B38, 49J45
\end{abstract}

	\section{Introduction}
	In this paper, we consider the Schrödinger field system coupled with its own electromagnetic field in the purely electrostatic case of the Bopp--Podolsky electromagnetic theory, described by the following system (see \cite{MR3957980})
	\begin{equation}\label{eq:2}
		\begin{cases}
			i\hbar \frac{\partial \psi}{\partial t} = -\frac{\hbar^2}{2m} \Delta \psi + U(x)\psi + q^2\phi \psi - \tilde{f}(x, |\psi|)\psi, \quad (t,x) \in \mathbb{R}^+ \times \mathbb{R}^3, \\
			-\Delta \phi + a^2 \Delta^2 \phi = 4\pi |\psi|^2, & 
		\end{cases}
	\end{equation}
	where $\hbar$ is the Planck constant, $m$ is the meson mass, $q>0$ is the charge, $\psi(t,x):\mathbb{R}^+ \times \mathbb{R}^3 \to \mathbb{C}$ is a complex-valued function and $a>0$ is the Bopp--Podolsky parameter that has the dimension of inverse mass. For simplicity, we set $\hbar^2 = 2m$ and $q=1$ in what follows. Further physical details of this model can be found in \cite{MR3689587,MR3998534,PhysRevD.90.085012,MR3707699}. In particular, system \eqref{eq:2} admits standing wave solutions of the form \[\psi(t,x)=e^{-i\omega t/\hbar}u(x),\] where $\omega \in \mathbb{R}$ denotes the frequency. Then system \eqref{eq:2} can be transformed into the following  nonlinear Schr\"odinger--Bopp--Podolsky (SBP) system
	\begin{equation}\label{eq:4}
		\begin{cases}
			-\Delta u + V(x)u + \phi u = f(x,u), & \text{in } \mathbb{R}^3, \\
			-\Delta \phi + a^2 \Delta^2 \phi = 4\pi u^2, & \text{in } \mathbb{R}^3,
		\end{cases}
	\end{equation}
	where
	\[V(x) = U(x)-\omega   \ \text{,} \  f(x,t)=\tilde{f}(x, |t|)t.\]
	
	System \eqref{eq:4} admits a variational structure. Under appropriate assumptions, it is well established that one can associate with it the energy functional $\Phi: H^1(\mathbb{R}^3)\times\mathcal{D} \to \mathbb{R}$. For the definition and properties of $\mathcal{D}$, see Section 2.
	\begin{equation*}\label{f2.2}
		\Phi(u,\phi)=\frac{1}{2}\int \left(|\nabla u|^2+V(x)u^2\right)+\frac{1}{2}\int\phi u^2-\frac{1}{16\pi}\int|\nabla \phi|^2-\frac{a^2}{16\pi}\int|\Delta \phi|^2-\int F(x,u)
	\end{equation*}
	with $ F(x,t) = \int_0^t f(x,\tau) \, d\tau$, so that $(u,\phi)$ solves \eqref{eq:4} if and only if it is a critical point of $\Phi$.
	However, owing to the presence of the $\nabla\phi$ term, the functional $\Phi$ is strongly unbounded from below and above under any conditions on $V$. Hence, to solve this problem, d'Avenia and Siciliano \cite{MR3957980} adopted a reduction
	procedure. For any fixed $u(x) \in H^1(\mathbb{R}^3)$, the Riesz Theorem guarantees a unique solution of the second equation in system \eqref{eq:4} in $\mathcal{D}$, namely $\phi=\phi_u=\mathcal{K}*|u|^2$, where $*$ represents the convolution in $\mathbb{R}^3$ and
	\[\mathcal{K}(x)=\frac{1-e^{-\frac{|x|}{a}}}{|x|}.\]
	Then we define the  functional $J:H^1(\mathbb{R}^3)\to \mathbb{R}$ by
	\begin{equation}\label{f2.3}
		J(u):=\Phi(u,\phi_{u})=\frac{1}{2}\int \left(|\nabla u|^2+V(x)u^2\right)+\frac{1}{4}\int\phi_u u^2-\int F(x,u),
	\end{equation}
	as shown in \cite{MR3957980}, if $u$ is a critical point of $J$ then $(u, \phi_u)$ is a solution of \eqref{eq:4}.

	In the past few years, the existence of nontrivial solutions for the following equation 
	\begin{equation}\label{eq:sec1}
		\begin{cases}
			-\Delta u + V(x)u + q^2\phi u = g(u), & \text{in } \mathbb{R}^3, \\
			-\Delta \phi + a^2 \Delta^2 \phi = 4\pi u^2, & \text{in } \mathbb{R}^3,
		\end{cases}
	\end{equation}
has been extensively studied via variational methods. In these studies, $V(x)$ was taken either as a constant or non-constant. In the first case, d'Avenia and Siciliano \cite{MR3957980} considered $g(u)=|u|^{p-2}u$ with $V(x)=V_\infty$. 
They employed the splitting lemma and monotonicity trick to prove that system \eqref{eq:sec1} admits a nontrivial solution if $p \in (2,6)$ and $q$ is sufficiently small, and also if $p \in (3,6)$ and $q \neq 0$; when $p \ge 6$, by applying the Pohožaev identity, they showed that system \eqref{eq:sec1} has no nontrivial solutions. Moreover, they also showed that, in the radial case, the solutions they found tend to solutions of the classical Schr\"odinger--Poisson system as $a \to 0$.
Subsequently, Siciliano and Silva \cite{MR4119258} complemented and improved the results in \cite{MR3957980} via the so-called “fibering method”. More precisely, they showed that the system \eqref{eq:sec1} admitted no solution when $p \in (2,3]$ and $q$ was sufficiently large, whereas it possessed two radial solutions when $q$ was sufficiently small. For more results, the reader is referred to \cite{Liu2022,MR4119258,MR4982514,MR5014952}. 

In the second case, $V(x)$ is not a constant. Yang et al. \cite{MR4143620} considered $g(u)=\lambda f(u) +|u|^4u$ without imposing any growth conditions or the Ambrosetti–Rabinowitz condition and assumed $V\in C^1(\mathbb{R}^3,\mathbb{R})$, $V_0 = \inf_{x \in \mathbb{R}^3} V(x) > 0$. Under appropriate assumptions on $g(u)$, they established the existence of a nontrivial solution of system \eqref{eq:sec1} by using a cut-off function, the mountain pass theorem and Moser iteration.
 Teng and Yan \cite{MR4198483} considered the case $g(u)=Q(x)|u|^{p-1}u$ with $p\in (3,5)$, $\lim_{|x|\to \infty}Q(x)=Q_\infty >0$ and assumed $V(x)=V_\infty+a(x)$, where $\lim_{|x|\to\infty}V(x)=V_\infty>0$ and $a(x)\in L^{\frac{3}{2}}(\mathbb{R}^3)$, $a(x)>0$. By developing some energy estimates and using a topological argument involving the barycenter function, they obtained the existence of a positive bound state solution of system \eqref{eq:sec1} for $p\in(3,5)$ under appropriate assumptions on $g(u)$. More recently, Chen et al. \cite{ChenLiRadulescuTang2021} developed a more general framework for the non-autonomous case and proved the existence of a ground state solution of system \eqref{eq:sec1} under mild assumptions on $V(x)$ and $g(u)$. Their approach relied on some original analytic techniques and new estimates of the ground state energy, with $V(x) \in C(\mathbb{R}^3,[0,+\infty))$ and $V_\infty=\lim_{|y|\to\infty}V(y)=\sup_{x\in\mathbb{R}^3}V(x)>0$ (the precise assumptions on $V$ are omitted here for brevity). Furthermore, Zheng et al. \cite{MR4944207} considered that $V_0=\inf_{x\in \mathbb{R}^3}V(x)>0$ and $g(u)=f(x,u)$ satisfied asymptotically cubic conditions rather than super‑cubic conditions. They proved the existence of a radial ground state sign‑changing solution $u$ of system \eqref{eq:sec1} with exactly two nodal domains. Moreover, they further showed that the energy of any radial sign‑changing solution of system \eqref{eq:sec1} was strictly greater than twice the least energy. For more results, the reader is referred to \cite{MR4626835,MR4519835,MR4050783,MR4217993}. 
 
 In those papers, it is required that the potential is positive definite, that is 
 \begin{equation*}
 V_0 = \inf_{x \in \mathbb{R}^3} V(x) > 0.
 \end{equation*}
 In those cases, the zero function is a local minimum point of the functional, so the mountain pass theorem can be used.
However, since $\omega\in\mathbb{R}$ can take arbitrarily large positive values, $V(x)$ may become negative. In this case, $V(x)$ is no longer positive definite but indefinite, and the zero function is not a local minimizer of the energy functional. Consequently, the classical mountain pass theorem is not applicable. Moreover, in this case the functional $J$ would not enjoy the general linking geometry due to the term $\phi_{u}$. Hence, the classical linking theorem is also not applicable. To our knowledge, no such results exist for the SBP system with an indefinite potential. However, when $a = 0$, the SBP system \eqref{eq:4} reduces to the Schr\"odinger--Poisson system, for which several works have investigated the existence of nontrivial solutions in the presence of an indefinite potential (see \cite{MR4119256,MR4081540,MR3656292}). As pointed out in \cite{MR4119256}, to overcome these difficulties and the fact that the Sobolev embedding $H^1(\mathbb{R}^3) \hookrightarrow L^2(\mathbb{R}^3)$ is not compact, Chen and Liu \cite{MR4119256} assumed that $V(x)$ was coercive to restore compactness. Then the local linking theory of Li and Willem \cite{LI19956} was applied to obtain critical points of the functional. Furthermore, Liu and Wu \cite{MR3656292} considered the case of more general $V(x)$ so that the above‑mentioned compact embedding may not hold. Therefore, stronger assumption on $f$ was imposed to restore compactness, namely,
	\[
	\lim_{|x| \to \infty} \sup_{0 < |t| \leq r} \left| \frac{f(x,t)}{t} \right| = 0, \quad \forall r > 0,
	\]
	then the  Morse theory was used to obtain critical points of the functional.
	
	In this paper, we consider the SBP system \eqref{eq:4} with an indefinite potential. Our approach is inspired by the work of Liu and Wu \cite{MR3656292} on 4-superlinear Schr\"odinger--Poisson systems.	We impose the following assumptions about the potential $V$ and the nonlinear term $f(x,u)$:
	\begin{itemize}
		\item[($V_1$)]\label{cond:v} \( V \in C(\mathbb{R}^3) \) is a bounded function  such that the quadratic form
		\[
		Q(u) = \int \left( |\nabla u|^2 + V(x)u^2 \right) dx
		\]
		is non-degenerate and the negative space of \( Q \) is finite-dimensional.
\end{itemize}
		
		Under this assumption, we may choose an equivalent norm $\|\cdot\|$ on \(X=H^1(\mathbb{R}^3)\) such that
		\[
		Q(u)=\|u^+\|^2-\|u^-\|^2,\quad u=u^++u^-,\quad u^\pm\in X^\pm,
		\]
		where \(X^+\) and \(X^-\) are the positive and negative spaces of \(Q\) respectively, $X=X^+\oplus X^-$. 
	\begin{itemize}
		\item[($V_2$)]\label{cond:v2} There exists $M_1>0$ and $R_1>0$ such that $V(x)\ge M_1$ for $|x| \ge R_1$.
		
		\item[$(f_1)$]\label{cond:f1} \( f \in C(\mathbb{R}^3 \times \mathbb{R}) \) and there exists $C>0$ such that
		\[
		|f(x,t)|\le C(|t|+|t|^5)
		\]and satisfies
		\[
		\lim_{|t| \to 0} \frac{f(x,t)}{t} = 0, \ \text{uniformly in}\ x \in \mathbb{R}^3.
		\]
		\item[$(f_2)$]\label{cond:f2} For \((x,t) \in \mathbb{R}^3 \times \mathbb{R} \setminus \{0\}\), we have \( 0 < 4F(x,t) \leq tf(x,t) \), moreover, for almost all \( x \in \mathbb{R}^3 \)
		\begin{equation}\label{eq:5}
			\lim_{|t| \to \infty} \frac{F(x,t)}{t^4} = +\infty.
		\end{equation}
		\item[$(f_3)$]\label{cond:f4} For any \( r > 0 \), we have
		\begin{equation*}
			\lim_{|x| \to \infty} \sup_{0 < |t| \leq r} \left| \frac{f(x,t)}{t} \right| = 0.
		\end{equation*}
		
		\item[$(f_4)$]\label{cond:f5} For some \(3\le s\le 5\), \( p\ge\frac{3}{2}\), and \(q\ge 6\) we have \( a \in L^\infty(\mathbb{R}^3) \cap L^p(\mathbb{R}^3), b \in L^\infty(\mathbb{R}^3) \cap L^q(\mathbb{R}^3) \) such that
		\begin{equation}\label{eq:7}
			|f(x,t)| \leq a(x)|t| + b(x)|t|^{s-1}.
		\end{equation}
	\end{itemize}
	
	The main results in this paper are as follows.
	\begin{thm}\label{th1.1}
		Suppose that $(V_1)$ and $(f_1)-(f_3)$ are satisfied, then system \eqref{eq:4} has at least one nontrivial solution.
	\end{thm}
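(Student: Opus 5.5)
Our plan is to follow Liu and Wu \cite{MR3656292}: first establish that $J$ satisfies the Cerami condition, then compute critical groups at $0$ and at infinity, and finally invoke Morse theory. The standard properties of $\phi_u$ from \cite{MR3957980}, recalled in Section 2, will be used throughout: $\phi_u\ge0$, $\int\phi_u u^2\le C\|u\|^4$, the map $u\mapsto\int\phi_uu^2$ is weakly lower semicontinuous, and $u\mapsto\phi_u$ is compact-type continuous in the sense that $\int\phi_{u_n}u_n(u_n-u)\to0$ whenever $u_n\rightharpoonup u$. Under $(V_1)$ and $(f_1)$ the functional $J$ is $C^1$ on $X=X^+\oplus X^-$, with $\dim X^-<\infty$.

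\textbf{Step 1: boundedness of Cerami sequences.} Let $(u_n)$ satisfy $J(u_n)\to c$ and $(1+\|u_n\|)J'(u_n)\to0$. Evaluating $4J(u_n)-\langle J'(u_n),u_n\rangle$ gives $\|u_n^+\|^2-\|u_n^-\|^2+\int(f u_n-4F)\le C$, where $f(x,u_n)u_n-4F(x,u_n)\ge0$ by $(f_2)$. Suppose $\|u_n\|\to\infty$ and set $w_n=u_n/\|u_n\|$. There are two cases.
\begin{enumerate}
\item If $w_n\rightharpoonup w\neq0$, then on the set $\{w\ne0\}$ we have $F(x,u_n)/\|u_n\|^4\to\infty$ by \eqref{eq:5}. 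Since $J(u_n)/\|u_n\|^4\to0$ while the quadratic part divided by $\|u_n\|^4$ tends to $0$ and the Poisson term divided by $\|u_n\|^4$ is bounded, Fatou's lemma yields a contradiction.
\item If $w\equiv0$, then $w_n^-\to0$ because $\dim X^-<\infty$. Here we use the Jeanjean-type trick: choose $t_n\in[0,1]$ maximizing $J(tu_n)$, and compare $J(t_nu_n)\ge J(Rw_n)$, which tends to $R^2/2$. The right-hand side becomes large as $R\to\infty$ once we know $\int F(x,Rw_n)\to0$. This last fact should follow from $(f_1)$ and $(f_3)$: weak convergence $w_n\rightharpoonup0$ together with $(f_3)$ gives $\int F(x,Rw_n)\to0$, because $(f_3)$ makes the nonlinearity small far away while local compactness handles bounded sets. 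On the other hand, the Ambrosetti--Rabinowitz-type inequality $4F\le tf$ makes $J(tu)-\frac14\langle J'(tu),tu\rangle$ nondecreasing in $t$ up to a quadratic correction. We control that correction using $w_n^-\to0$, which gives boundedness of $J(t_nu_n)$ and hence a contradiction.
\end{enumerate}
We expect this step to be the main obstacle. The reason is that the indefinite quadratic part $-\|u^-\|^2$ and the quartic Poisson term break the usual monotonicity argument, so the finite-dimensionality of $X^-$ must be used carefully.

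\textbf{Step 2: compactness.} Given a bounded Cerami sequence, pass to a subsequence with $u_n\rightharpoonup u$. Condition $(f_3)$ together with $(f_1)$ gives $\int f(x,u_n)(u_n-u)\to0$: on $|x|\ge R$, $(f_3)$ with $r$ large bounds the low part, while the high part is controlled by $(f_1)$ and the Sobolev bounds; on $|x|\le R$ we use the compact embedding. Combined with the Poisson term property and the strong convergence $u_n^-\to u^-$ in finite dimensions, this yields $\|u_n^+-u^+\|\to0$.

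\textbf{Step 3: critical groups.} At $0$, $(f_1)$ gives $J(u)=\frac12(\|u^+\|^2-\|u^-\|^2)+o(\|u\|^2)$, and the Poisson term is $O(\|u\|^4)$. We verify local linking with respect to $X=X^-\oplus X^+$: on $X^-$ near $0$ we get $J<0$ for $u\ne0$, and on $X^+$ small we get $J>0$. Since $J$ is $C^1$, the local linking theorem of Liu gives $C_k(J,0)\ne0$ for $k=\mu:=\dim X^-$. At infinity, $(f_2)$ implies that for every $u\ne0$ we have $J(tu)\to-\infty$ as $t\to\infty$, faster than $t^4$ because $F$ is superquartic while the Poisson term is only quartic. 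Moreover, $\frac{d}{dt}J(tu)<0$ whenever $J(tu)\le-A$ for $A$ large; this follows from $4F\le tf$ via $\langle J'(u),u\rangle\le4J(u)-\|u^+\|^2+\|u^-\|^2$, after adjusting the argument to absorb the quadratic term. Consequently $J^{-A}$ is a deformation retract of $X\setminus\{0\}$, which is contractible since $\dim X=\infty$, so $C_k(J,\infty)=0$ for all $k$. Because $C_\mu(J,0)\ne0=C_\mu(J,\infty)$, the standard Morse-theoretic result produces a critical point $u\ne0$. Then $(u,\phi_u)$ is a nontrivial solution of \eqref{eq:4}.
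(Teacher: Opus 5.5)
Your overall architecture matches the paper's: bounded sequences, compactness from $(f_3)$, local linking at $0$ for $X=X^-\oplus X^+$ giving $C_{\dim X^-}(J,0)\neq0$, and $C_*(J,\infty)=0$ from a lemma of the form ``$J(u)\le-A\Rightarrow \frac{d}{dt}J(tu)|_{t=1}<0$''. Using Cerami instead of (PS) sequences is harmless.

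The genuine gap is in the step you call the main obstacle, the case $w=0$ of Step~1: the Jeanjean-type argument does not go through under the hypotheses. At an interior maximum, $J(t_nu_n)=\frac{t_n^2}{4}Q(u_n)+\int\bigl(\frac14 f(x,t_nu_n)t_nu_n-F(x,t_nu_n)\bigr)$. To bound this you need $s\mapsto\frac14 f(x,s)s-F(x,s)$ to be nondecreasing in $|s|$ (essentially $f(x,s)/s^3$ increasing), but $(f_2)$ only says it is nonnegative. Also, $\int F(x,Rw_n)\to0$ is unjustified: $(f_1)$ allows $F\sim|t|^6$ on bounded sets, and $w_n\rightharpoonup0$ does not give $w_n\to0$ in $L^6_{loc}$.

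None of this is needed. You already wrote $Q(u_n)+\int\bigl(f(x,u_n)u_n-4F(x,u_n)\bigr)\le C$. The Poisson term is homogeneous of degree four, so it cancels exactly in $4J(u)-\langle J'(u),u\rangle$ and obstructs nothing. If $w=0$, then $w_n^-\to0$ in the finite-dimensional $X^-$. Hence $Q(u_n)=\|u_n\|^2\bigl(\|w_n^+\|^2-\|w_n^-\|^2\bigr)\ge\frac12\|u_n\|^2$ for large $n$, a contradiction. This is Lemma~\ref{lem3.2}, and it also works for (PS) sequences, with $C+o(1)\|u_n\|$ in place of $C$.

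Two more repairs are needed. First, the ``standard property'' $\int\phi_{u_n}u_n(u_n-u)\to0$ whenever $u_n\rightharpoonup u$ is false on $\mathbb{R}^3$. For $u_n=w(\cdot-y_n)$ with $|y_n|\to\infty$ and $u=0$, Lemma~\ref{th2.3}(i) gives $\int\phi_{u_n}u_n^2=\int\phi_ww^2>0$. What is true is $\liminf\int\phi_{u_n}u_n(u_n-u)\ge0$ (Lemma~\ref{lem3.4}), which follows from weak lower semicontinuity of $\mathcal M$ and weak sequential continuity of $\mathcal M'$. It has the right sign and suffices in Step~2.

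Second, ``adjusting the argument to absorb the quadratic term'' at infinity really requires the blow-up argument of Lemma~\ref{lem4}. From $J(u_n)\le-n$ and $\langle J'(u_n),u_n\rangle\ge0$ you get $Q(u_n)\le-4n$; then normalize. If $v_n^-\to v^-\neq0$, the weak limit is nonzero and superquartic growth forces $\langle J'(u_n),u_n\rangle/\|u_n\|^4\to-\infty$. If $v^-=0$, then $Q(u_n)>0$ eventually, a contradiction.

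Finally, a caveat you share with the paper. $(f_1)$ allows the critical term $|t|^5$, and $H^1(B_R)\hookrightarrow L^6(B_R)$ is not compact, so your Step~2 claim $\int f(x,u_n)(u_n-u)\to0$ fails as stated. For instance, take $V\equiv1$ and $f=e^{-|x|^2}|t|^4t$, which satisfy $(V_1)$ and $(f_1)$--$(f_3)$. Truncated Aubin--Talenti bubbles concentrating at $0$ then form a (PS) sequence at level $\frac13S^{3/2}$ ($S$ the Sobolev constant) with no convergent subsequence. The paper's appeal to \cite{Bartsch2004} likewise tacitly needs subcritical growth.
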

	
	\begin{thm}\label{th1.2}
		Suppose that $(V_1)$, $(f_1)$, $(f_2)$ and $(f_4)$ are satisfied, then system \eqref{eq:4} also has at least one nontrivial solution.
	\end{thm}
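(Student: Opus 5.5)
The plan is to follow the same scheme as for Theorem~\ref{th1.1}: a local linking at the origin combined with a Morse-theoretic computation of critical groups. The only part that must change is the compactness argument, where $(f_3)$ is replaced by the integrability in $(f_4)$.

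\emph{Regularity and local linking.} By $(f_1)$, $J\in C^1(X,\mathbb{R})$, since $u\mapsto\int\phi_u u^2$ is $C^1$ by the properties of $\mathcal{K}$ from \cite{MR3957980}. Next I would show that $J$ has a local linking at $0$ with respect to $X=X^+\oplus X^-$.

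On $X^+$, $(f_1)$ gives $|F(x,t)|\le\varepsilon t^2+C_\varepsilon|t|^6$. Since $\phi_u\ge0$, this yields
\[
J(u)\ge\tfrac12\|u\|^2-\varepsilon C\|u\|^2-C\|u\|^6>0
\]
for $0<\|u\|\le\rho$.

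On $X^-$, which is finite dimensional, all norms are equivalent. We also have $\int\phi_u u^2\le C\|u\|^4$ and $F\ge0$. Hence
\[
J(u)\le-\tfrac12\|u\|^2+C\|u\|^4\le0
\]
for small $\|u\|$. Shrinking $\rho$, and using that $0$ is an isolated critical point (otherwise we are done), Liu's theorem gives $C_q(J,0)\ne0$ with $q=\dim X^-$. In the case $\dim X^-=0$, $0$ is a strict local minimum and $C_0(J,0)\ne0$.

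\emph{Compactness.} I would show that $J$ satisfies the Cerami condition. For boundedness of a Cerami sequence $(u_n)$, compute
\[
4J(u_n)-J'(u_n)u_n=\|u_n^+\|^2-\|u_n^-\|^2+\int\bigl(f(x,u_n)u_n-4F(x,u_n)\bigr)\ge\|u_n^+\|^2-\|u_n^-\|^2,
\]
using $(f_2)$. The $\phi$-terms cancel exactly because of the quartic homogeneity $\phi_{tu}=t^2\phi_u$.

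Suppose $\|u_n\|\to\infty$ and set $w_n=u_n/\|u_n\|$. The inequality above gives $\|w_n^-\|^2\ge\frac12-o(1)$. Because $X^-$ is finite dimensional, $w_n^-\to w^-\neq0$ strongly, so $w\ne0$. Dividing $J(u_n)$ by $\|u_n\|^4$, the quadratic and $\phi$ parts stay bounded (using $\int\phi_w w^2\le C\|w\|^4$). Meanwhile, by Fatou and the superquartic condition in $(f_2)$, $\int F(x,u_n)/\|u_n\|^4\to+\infty$ on $\{w\ne0\}$. This is a contradiction.

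Convergence is where $(f_4)$ is used. Take $u_n\rightharpoonup u$. Write
\[
\langle J'(u_n)-J'(u),u_n-u\rangle\to0 .
\]
The negative component converges strongly. The nonlocal term $\int(\phi_{u_n}u_n-\phi_u u)(u_n-u)$ is bounded by $\|\phi_{u_n}u_n\|_{L^{3/2}\text{-dual}}$-type estimates. I would rather handle it through $\phi_{u_n}\to\phi_u$ in $L^\infty$ locally and $\phi_{u_n}$ bounded in $L^\infty$; since $\mathcal K\le 1/a$, we get $\|\phi_u\|_\infty\le C\|u\|_2^2$.

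The $f$-term is $\int a(x)|u_n||u_n-u|+b(x)|u_n|^{s-1}|u_n-u|$. Since $a\in L^p\cap L^\infty$ with $p\ge3/2$, the weight $a$ makes multiplication compact from $H^1$ into $L^2$-pairings. Similarly $b\in L^q\cap L^\infty$ with $q\ge6$ and $s\le5$: by H\"older, the tails of $a$ and $b$ in $\{|x|>R\}$ are small, and local compactness handles bounded sets. Thus $\int f(x,u_n)(u_n-u)\to0$. Hence $\|u_n^+-u^+\|\to0$.

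\emph{Conclusion and main obstacle.} By $(f_2)$, $J$ is anticoercive on finite-dimensional subspaces containing $X^-$. This gives $C_k(J,\infty)=0$ for all $k$, via the standard argument with the vector field along rays (as in Liu--Wu \cite{MR3656292}), using $\frac{d}{dt}J(tu)<0$ for large $J$-negative levels. Comparing $C_q(J,0)\ne0$ with $C_*(J,\infty)=0$ through the Morse inequalities gives a nontrivial critical point $u$, and then $(u,\phi_u)$ solves \eqref{eq:4}.

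I expect the main obstacle to be the convergence step. Specifically, one must check that the pairing $\int\phi_{u_n}u_n(u_n-u)\to0$ and the Hölder exponent bookkeeping for $b|u_n|^{s-1}|u_n-u|$ with $s=5$ works. The endpoint $s=5$, $q=6$ is the delicate one: there $b|t|^4\cdot|t|$ gives exponents $1/6+4/6+1/6=1$, which is borderline. In that case I would first try splitting $b=b\chi_{B_R}+b\chi_{B_R^c}$, using $\|b\chi_{B_R^c}\|_{6}\to0$ for the tail and the $L^\infty$ bound plus local compactness for the bounded part.
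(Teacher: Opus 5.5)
Your architecture matches the paper's. Local linking at $0$ for $X=X^+\oplus X^-$ gives $C_{\dim X^-}(J,0)\neq0$, the ray argument (Lemmas \ref{lem4} and \ref{group1}) gives $C_*(J,\infty)=0$, and $(f_3)$ is replaced by the tail/local splitting of Lemma \ref{lem5.1}. Your boundedness argument is correct and slightly more economical than the case split in Lemma \ref{lem3.2}: the identity $4J(u_n)-\langle J'(u_n),u_n\rangle=Q(u_n)+\int(f(x,u_n)u_n-4F(x,u_n))$ forces $w^-\neq0$, and you then divide $J(u_n)$ by $\|u_n\|^4$. Your handling of the $f$-term, including the endpoint $s=5$, $q=6$, is also fine.

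The genuine gap is the nonlocal term in the convergence step. You plan to prove $\int\phi_{u_n}u_n(u_n-u)\to0$ from local convergence of $\phi_{u_n}$ and the bound $\|\phi_{u_n}\|_\infty\le C\|u_n\|_2^2$. This cannot work. Under $(V_1)$ there is no compact embedding, and on $\{|x|>R\}$ you only get $C\|u_n\|_{L^2(|x|>R)}\|u_n-u\|_{L^2(|x|>R)}$, which need not be small. In fact the claim is false for general weakly convergent sequences: for $u_n=u+w(\cdot-y_n)$ with $|y_n|\to\infty$ and $w\neq0$, one gets $\int\phi_{u_n}u_n(u_n-u)\to\int\phi_w w^2>0$. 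For a (PS) sequence the limit is $0$, but only a posteriori. The missing idea is that only a sign is needed:
\[
\int(\phi_{u_n}u_n-\phi_u u)(u_n-u)=\int\phi_{u_n}(u_n-u)^2+\int(\phi_{u_n}-\phi_u)\,u\,(u_n-u).
\]
The first integral is nonnegative. The second tends to $0$, and here your tools do work: $\phi_{u_n}\to\phi_u$ pointwise with a uniform $L^\infty$ bound, so $(\phi_{u_n}-\phi_u)u\to0$ in $L^2$ by dominated convergence. Combined with $\|u_n^--u^-\|\to0$ and the $(f_4)$-compactness, this gives $\|u_n^+-u^+\|^2\le o(1)-\int\phi_{u_n}(u_n-u)^2\le o(1)$. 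This is exactly what the paper's Lemma \ref{lem3.4} supplies, namely $\liminf_n\int\phi_{u_n}u_n(u_n-u)\ge0$, obtained from weak lower semicontinuity of $\mathcal{M}$ and weak sequential continuity of $\mathcal{M}'$. That estimate is then inserted into the argument of Lemma \ref{lem3.5}.
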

	
	\begin{thm}\label{th1.4}
		Suppose that $(V_1)$, $(V_2)$ and  $(f_1)-(f_3)$ are satisfied. If $u$ is a nontrivial solution of system \eqref{eq:4}, then $u \in L^\infty(\mathbb{R}^3) \cap C^{1,\alpha}_{loc}(\mathbb{R}^3)$. Moreover, there exists positive constants $\mu$ and $C>0$ such that 
		\[
		|u(x)| \le Ce^{-\mu|x|},\ \text{for sufficiently large}\ |x|.
		\]
	\end{thm}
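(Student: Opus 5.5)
Throughout, $u\in H^1(\mathbb{R}^3)$ denotes a nontrivial critical point of $J$, i.e. a weak solution of
\[
-\Delta u+V(x)u+\phi_u u=f(x,u).
\]
We use the standard properties of $\phi_u=\mathcal{K}*u^2$: $\phi_u\ge0$, and $\phi_u\in L^\infty(\mathbb{R}^3)$ because $\mathcal{K}\le 1/a$, so that $\|\phi_u\|_\infty\le \|u\|_2^2/a$. The plan has three steps: an $L^\infty$ bound by Moser iteration, local $C^{1,\alpha}$ regularity by elliptic theory, and exponential decay by comparison with a supersolution outside a large ball.

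\textbf{Step 1 ($L^\infty$ bound).} Write the equation as $-\Delta u = g(x)$ with $g=f(x,u)-V(x)u-\phi_u u$. By $(f_1)$, the boundedness of $V$ and $\phi_u\in L^\infty$,
\[
|g|\le C(|u|+|u|^5)\le C\,(1+|u|^4)\,|u| .
\]
We run a Brezis--Kato/Moser iteration. Test the equation with $u\,u_T^{2(\beta-1)}$, where $u_T=\min\{|u|,T\}$ and $\beta>1$. The term $\phi_u u^2u_T^{2(\beta-1)}\ge0$ has a good sign and can be dropped. The troublesome term $\int |u|^4\,u^2u_T^{2(\beta-1)}$ is handled with $|u|^4\in L^{3/2}$: split $|u|^4=h_1+h_2$ with $\|h_1\|_{3/2}$ small and $h_2\in L^\infty$, and absorb the $h_1$ contribution into the Sobolev term via $\|w\|_6^2\le S\|\nabla w\|_2^2$, where $w=u\,u_T^{\beta-1}$. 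This gives $u\in L^q$ for every $q<\infty$. Then $g\in L^q_{\mathrm{loc}}$ uniformly, with bounds independent of the center, so local $W^{2,q}$ estimates on unit balls together with a standard Moser iteration yield $\|u\|_{L^\infty(B_1(y))}\le C$ for all $y$. Hence $u\in L^\infty(\mathbb{R}^3)$.

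\textbf{Step 2 ($C^{1,\alpha}_{\mathrm{loc}}$).} Now $g\in L^\infty_{\mathrm{loc}}$, so Calder\'on--Zygmund estimates give $u\in W^{2,q}_{\mathrm{loc}}$ for all $q<\infty$. Morrey's embedding then gives $u\in C^{1,\alpha}_{\mathrm{loc}}$.

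\textbf{Step 3 (exponential decay).} This is the main step. First we show $u(x)\to0$ as $|x|\to\infty$. The local estimate
\[
\|u\|_{L^\infty(B_1(y))}\le C\|u\|_{L^2(B_2(y))}
\]
holds because the coefficients are uniformly bounded (subsolution estimate for $|u|$), and the right-hand side tends to $0$ as $|y|\to\infty$ since $u\in L^2$.

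Next, by Kato's inequality, in the distributional sense
\[
\Delta|u|\ \ge\ \operatorname{sgn}(u)\,\Delta u=\Big(V+\phi_u-\frac{f(x,u)}{u}\Big)|u|\ \ge\ \Big(V-\frac{f(x,u)}{u}\Big)|u|,
\]
where the last inequality uses $\phi_u\ge0$. Set $r=\|u\|_\infty$. By $(f_3)$ applied with this $r$, together with $(V_2)$, there is $R\ge R_1$ such that
\[
V(x)-\frac{f(x,u(x))}{u(x)}\ \ge\ \frac{M_1}{2}\quad\text{for } |x|\ge R
\]
(on the set where $u\ne0$; elsewhere the inequality for $|u|$ is trivial). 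Thus $-\Delta|u|+\frac{M_1}{2}|u|\le0$ in $|x|>R$.

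Choose $\mu^2<M_1/2$ and compare $|u|$ with
\[
w(x)=C e^{-\mu|x|},
\]
which satisfies $-\Delta w+\frac{M_1}{2}w\ge \big(\frac{M_1}{2}-\mu^2-\frac{2\mu}{|x|}\big)w\ge0$ for $|x|\ge R$ after enlarging $R$. Take $C$ large enough that $w\ge\|u\|_\infty\ge|u|$ on $|x|=R$. Since $|u|\to0$ at infinity, the weak maximum principle applied to $(|u|-w)^+$ on $\{|x|>R\}$ (test function admissible since it vanishes on the sphere and at infinity) gives $|u|\le w$ for $|x|\ge R$.

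The delicate points are justifying Kato's inequality for a weak solution (fine, since $u\in W^{2,q}_{\mathrm{loc}}$ by Step 2) and the uniform-in-$y$ constants in Step 1. The critical growth $|t|^5$ is what forces the Brezis--Kato splitting in Step 1.
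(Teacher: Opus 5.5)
Your proof is correct. It shares the paper's overall skeleton: elliptic regularity, Kato's inequality for $|u|$ outside a large ball, and comparison with an exponentially decaying supersolution. The boundedness and decay-to-zero step, however, is organized differently.

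\textbf{Boundedness and decay to zero.} The paper applies Brezis--Kato only on balls (Lemma~\ref{lem5.11}). It then runs a separate Moser iteration with cut-offs on exterior regions $\{|x|\ge R\}$ (Lemma~\ref{lem5.12}), using $(V_2)$ to absorb the linear part of $f$ and the smallness of $\|u\|_{L^6(|x|>R)}$ to absorb the critical part. This yields global boundedness together with the quantitative tail bound $\|u\|_{L^\infty(|x|\ge R)}\le C\|u\|_{L^6(|x|\ge R/2)}$. You instead run Brezis--Kato once on all of $\mathbb{R}^3$, obtain $u\in L^q(\mathbb{R}^3)$ for every $q<\infty$, and then use center-independent local estimates. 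This is cleaner, and it gives $u\in L^\infty(\mathbb{R}^3)$ and $u(x)\to0$ without invoking $(V_2)$, which you need only in the final comparison.

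\textbf{The differential inequality.} The paper uses $(f_1)$ together with $u(x)\to0$, so its decay argument never uses $(f_3)$. You use $(f_3)$ with $r=\|u\|_\infty$. In that case your decay-to-zero step is actually superfluous. The comparison only requires $(|u|-w)^+\in H^1_0(\{|x|>R\})$, and this already follows from $u\in H^1$ and continuity. Conversely, since you do prove $u\to0$, you could switch to $(f_1)$ at this point and dispense with $(f_3)$, as the paper does.

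\textbf{The comparison function.} The paper cites Moroz--Van Schaftingen \cite{Moroz2013} for a radial solution of $-\Delta v+\frac{M_1}{4}v=0$ with the sharp bound $v\le C|x|^{-1}e^{-\frac{\sqrt{M_1}}{2}|x|}$. You use the explicit $w=Ce^{-\mu|x|}$ with a weak maximum principle, which is self-contained. One small point: $-\Delta w=(\frac{2\mu}{|x|}-\mu^2)w$. So the $\frac{2\mu}{|x|}$ term has the favorable sign, and no enlargement of $R$ is needed there.
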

	\begin{remark} \label{rem:1.3}
		This estimate also holds for sign-changing solutions, which partly extends the results of Theorem 1.1 in \cite{MR4889191} where the decay behavior was only considered for positive solutions.
	\end{remark}
	\begin{thm}\label{th1.5}
		Suppose that $(V_1)$ and $(f_1)-(f_3)$ are satisfied, then system \eqref{eq:4} has a ground state solution $u_0 \in X$.
	\end{thm}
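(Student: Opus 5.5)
Since Theorem \ref{th1.1} already gives a nontrivial critical point of $J$, the set
\[
\mathcal{K}=\{u\in X\setminus\{0\}: J'(u)=0\}
\]
is nonempty. I would define $c_0=\inf_{u\in\mathcal{K}}J(u)$ and prove two things: that $c_0>-\infty$, and that $c_0$ is attained.

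\textbf{Step 1: lower bound on $\mathcal{K}$.} For $u\in\mathcal{K}$, the identity $J(u)-\frac14\langle J'(u),u\rangle$ gives
\[
J(u)=\tfrac14 Q(u)+\int\Big(\tfrac14 f(x,u)u-F(x,u)\Big),
\]
and the $\phi_u$ terms cancel exactly because $\phi_u$ is quadratic in $u$. By $(f_2)$ the integral is nonnegative, so $J(u)\ge \frac14(\|u^+\|^2-\|u^-\|^2)\ge -\frac14\|u^-\|^2$. Since $X^-$ is finite dimensional, this alone is not a bound, so $\|u^-\|$ must be controlled on $\mathcal{K}$.

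For that I would reuse the Cerami/(PS) boundedness argument from the proof of Theorem \ref{th1.1}, in the form: any sequence $u_n\in\mathcal{K}$ with $J(u_n)$ bounded above is bounded. The argument runs as follows.
\begin{enumerate}
\item Suppose $\|u_n\|\to\infty$ and set $w_n=u_n/\|u_n\|$.
\item From $0=\langle J'(u_n),u_n\rangle$, divide by $\|u_n\|^4$. The terms $\int F/\|u_n\|^4$ and $\int fu_n/\|u_n\|^4$ are then controlled against $\int\phi_{u_n}u_n^2/\|u_n\|^4\le C$.
\item If $w_n\not\to0$ in $L^2_{loc}$ (after translations, using $(f_3)$ to kill the region where $|x|$ is large), superquartic growth \eqref{eq:5} and Fatou make $\int F(x,u_n)/\|u_n\|^4\to\infty$, a contradiction.
\item If $w_n$ vanishes, then via $(f_3)$ and Lions' lemma one gets $\int F(x,u_n)/\|u_n\|^2\to0$ and $\|w_n^-\|\to0$ by finite dimensionality. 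The relation $J(u_n)\ge \frac14 Q(u_n)$ then forces $Q(w_n)\to 0$ while $\|w_n^+\|\to1$, again a contradiction.
\end{enumerate}
Granting this bound, $c_0>-\infty$: if $c_0=-\infty$, pick $u_n\in\mathcal{K}$ with $J(u_n)\to-\infty$; these are bounded, hence $J(u_n)$ is bounded, which is absurd.

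\textbf{Step 2: a minimizing sequence has a convergent subsequence.} Take $u_n\in\mathcal{K}$ with $J(u_n)\to c_0$. By Step 1 it is bounded. By the compactness already established for Theorem \ref{th1.1} (the (PS) condition coming from $(f_3)$, which makes $u\mapsto\int F(x,u)$ weakly-to-strongly continuous together with its derivative, while $\phi_u$ is handled by the standard properties from Section 2), we get $u_n\to u_0$ strongly. Hence $J'(u_0)=0$ and $J(u_0)=c_0$.

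\textbf{Step 3: the limit is nontrivial.} This is the main obstacle: $u_0\neq 0$ is needed, i.e. nontrivial critical points cannot accumulate at $0$. I would argue by contradiction. From $\langle J'(u_n),u_n^+-u_n^-\rangle=0$ we get
\[
\|u_n\|^2+\int\phi_{u_n}u_n(u_n^+-u_n^-)=\int f(x,u_n)(u_n^+-u_n^-).
\]
By $(f_1)$, $|f(x,t)|\le\varepsilon|t|+C_\varepsilon|t|^5$. The nonlocal term is $O(\|u_n\|^4)$ and the right side is at most $\varepsilon C\|u_n\|^2+C\|u_n\|^6$. So if $\|u_n\|\to0$, then
\[
\|u_n\|^2\le \varepsilon C\|u_n\|^2+o(\|u_n\|^2),
\]
a contradiction once $\varepsilon$ is chosen small. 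Hence $\|u_n\|\ge\delta>0$, so $u_0\ne0$ and $u_0$ is a ground state. If the bound in Step 1 proves delicate, I would first try to reduce it to the Cerami boundedness lemma used for Theorem \ref{th1.1}.
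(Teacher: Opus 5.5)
Your proposal is correct and follows the paper's proof essentially step by step. The paper also minimizes $J$ over the nontrivial critical set and proves the infimum is finite by rerunning the boundedness argument of Lemma~\ref{lem3.2} on critical points; it then gets a convergent minimizing sequence from Lemma~\ref{lem3.5}, and rules out $u_0=0$ by testing $J'(u)=0$ against $u^+$ and $u^-$ with the $(f_1)$ estimate, which is equivalent to your single test function $u^+-u^-$. The one thing to trim is Step~1: the translations, $(f_3)$ and Lions' lemma are unnecessary there, and the claim $\int F(x,u_n)/\|u_n\|^2\to0$ is unjustified. It suffices to split on the weak limit $v$ of $w_n$: $v\neq0$ is excluded by Fatou and \eqref{eq:5}, while $v=0$ gives $w_n^-\to0$ and hence $J(u_n)\ge\tfrac14 Q(u_n)\to+\infty$ from $(f_2)$ alone, contradicting the upper bound on $J(u_n)$, exactly as in the paper.
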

	\begin{remark} \label{rem:1.2}
		Theorem \ref{th1.5} also holds when $a=0$, which extends the result of Liu \cite{MR3656292}.
	\end{remark}
	\begin{thm}\label{th1.3}
	Suppose that $(V_1)$ and $(f_1)-(f_3)$ are satisfied,
		If $f(x,t)$ is odd, then system \eqref{eq:4} has a sequence of values $(u_n,\phi_{u_n}) \in X\times \mathcal{D} $ such that $J(u_n) \to +\infty$.
	\end{thm}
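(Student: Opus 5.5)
We prove Theorem \ref{th1.3} with the symmetric mountain pass theorem in its fountain-type form (Rabinowitz's $\mathbb{Z}_2$ version, or Bartsch's fountain theorem). Because $X^-$ is finite-dimensional, we use the version where the functional is even, satisfies (PS), and has the geometry
\[
X=Y\oplus Z,\quad \dim Y<\infty,\qquad J|_{\partial B_\rho\cap Z}\ge \alpha>0,
\]
together with: for every finite-dimensional subspace $\tilde X\subset X$ there is $R=R(\tilde X)$ with $J\le 0$ on $\tilde X\setminus B_R$. This yields an unbounded sequence of critical values. Evenness is immediate, since $f$ is odd makes $F$ even, and $\phi_{-u}=\phi_u$, so $J(-u)=J(u)$. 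For the (PS) condition we reuse the compactness established for Theorem \ref{th1.1}.

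\textbf{(PS) condition.} This is presumably proved earlier in the paper under $(V_1),(f_1)$–$(f_3)$; if not, we argue as follows. Let $(u_n)$ be a (PS) sequence. Boundedness comes from
\[
J(u_n)-\tfrac14\langle J'(u_n),u_n\rangle=\tfrac14 Q(u_n)+\int\big(\tfrac14 f(x,u_n)u_n-F(x,u_n)\big),
\]
where the nonlocal terms cancel exactly because $\int\phi_u u^2$ is $4$-homogeneous. Since $\dim X^-<\infty$, we combine this identity with a normalization argument $v_n=u_n/\|u_n\|$. If $v_n\to v\neq0$, then $(f_2)$ (superquartic growth) together with Fatou's lemma contradicts $J(u_n)/\|u_n\|^4\to 0$, using $\int\phi_{u}u^2\le C\|u\|^4$. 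If $v\equiv 0$, then $v_n^-\to0$ in the finite-dimensional space, and we get a contradiction by comparing $\tfrac14\|u_n^+\|^2$ with the bound. Strong convergence then follows from $(f_3)$: the nonlinearity is small outside a large ball, and on the ball we have local compactness. In addition, $u\mapsto \int\phi_u u^2$ has a weakly sequentially continuous derivative along bounded sequences, since $\mathcal K\le 1/|x|$ and the Hardy–Littlewood–Sobolev inequality apply.

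\textbf{Geometry.} Set $Y=X^-\oplus\mathrm{span}\{e_1,\dots,e_k\}$ and $Z=\overline{\mathrm{span}}\{e_j:j>k\}\subset X^+$, where $(e_j)$ is an orthonormal basis of $X^+$. By $(f_1)$, for $u\in X^+$ we have $J(u)\ge \tfrac12\|u\|^2-\varepsilon|u|_2^2-C_\varepsilon|u|_6^6\ge \tfrac14\|u\|^2-C\|u\|^6$, which gives $\alpha,\rho$ already with $k=0$. For the second condition we take a finite-dimensional $\tilde X$ and $u\in \tilde X$ with $\|u\|\to\infty$. All norms on $\tilde X$ are equivalent, and $\int\phi_uu^2\le C\|u\|^4$. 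Writing $u=tw$ with $\|w\|=1$, $(f_2)$ gives $F(x,tw)/t^4\to\infty$ on $\{w\ne0\}$, which has positive measure. Fatou's lemma, applied uniformly over the compact unit sphere of $\tilde X$ via a standard contradiction argument, then gives $J(tw)/t^4\to-\infty$ uniformly. Hence $J\le0$ outside a large ball.

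With these ingredients the symmetric mountain pass theorem produces critical points $u_n$ with $J(u_n)=c_n\to+\infty$, and the corresponding pairs $(u_n,\phi_{u_n})$ solve \eqref{eq:4}. To see $c_n\to\infty$, recall that the minimax levels are defined over genus classes whose members must meet $\partial B_\rho\cap Z$ where $Z$ has codimension growing with $n$. If one prefers the fountain theorem, one needs $b_k=\inf_{u\in Z_k,\|u\|=r_k}J(u)\to\infty$. Here we use $\beta_k=\sup_{u\in Z_k,\|u\|=1}|u|_6\to0$, or its analogue in a weighted norm if $|\cdot|_6$ is not compact. This step is delicate on $\mathbb{R}^3$, so we favor Rabinowitz's version, which requires only the geometry above. \textbf{The main obstacle} is the (PS) condition, specifically boundedness with the indefinite part and the loss of compactness on $\mathbb{R}^3$, which $(f_3)$ is designed to handle.
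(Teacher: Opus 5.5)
Your proposal is correct and follows the paper's route: Rabinowitz's symmetric mountain pass theorem (Proposition \ref{thm6.1}) with $Y=X^-$ and $Z=X^+$ (your $k=0$), evenness of $J$ from the oddness of $f$, the (PS) condition from Lemma \ref{lem3.5}, the lower bound on $\partial B_\rho\cap Z$ from $(f_1)$, and anti-coercivity on finite-dimensional subspaces from $(f_2)$ via Fatou's lemma, exactly as in Lemma \ref{lem6.2}. One imprecision, which does not affect the argument since (PS) is already established: in your (PS) sketch, weak sequential continuity of $\mathcal{M}'$ alone does not control $\int\phi_{u_n}u_n(u_n-u)$, and the paper also uses the weak lower semicontinuity of $\mathcal{M}$ for this (Lemma \ref{lem3.4}).
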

	
	The structure of this paper is arranged as follows. In Section 2, we present the notations and some preliminaries used in this paper. In Section 3, we prove that the Palais-Smale (for simplicity, we denote it as $(PS)$ in the sequel) sequence of $J$ is bounded and $J$ satisfies the $(PS)$ condition. Moreover, we use Morse theory and local linking argument to prove Theorems \ref{th1.1} and \ref{th1.2}. In Section 4, we  prove Theorem \ref{th1.4} by Moser iteration and obtain Theorem \ref{th1.5} via minimization techniques. In Section 5, we prove Theorem \ref{th1.3} by symmetric mountain pass theorem under the assumption that $f(x,t)$ is odd.
	
	\section{Preliminaries }
	In this section, we introduce some notations and preliminary results. In this paper, we use the following notation:
	\begin{itemize}
		\item Let $L^p(\mathbb{R}^3)$ ($1\le p<\infty$) be the usual Lebesgue space with norm $\|\cdot\|_p$, and $L^\infty(\mathbb{R}^3)$ the essentially bounded functions with norm $\|\cdot\|_\infty$;
		\item $L_{loc}^p(\mathbb{R}^3)$ is the space of measurable functions $u:\mathbb{R}^3 \to \mathbb{R}$ such that for every compact set $K \subset \mathbb{R}^3$, $\|u\|_{L^p(K)} <+\infty$;
		\item Let $\|\cdot\|$ be the norm of $H^1(\mathbb{R}^3)$ and denote $X=H^1(\mathbb{R}^3)$;
		\item We use the symbol $o(1)$ for a vanishing sequence in the specified space;
		\item For simplicity of writing, we write $\int f(x)$ for $\int_{\mathbb{R}^3} f(x) dx$;
		\item We use $C_1, C_2, \dots$ to denote suitable positive constants that may change from line to line.
	\end{itemize}
	
	In the study of SBP system, we also need a specific Hilbert space $\mathcal{D}$ to handle the electromagnetic potential $\phi$ arising from the second equation. Let $\mathcal{D}$ be the completion of $C_c^\infty(\mathbb{R}^3)$ with respect to the norm $\|\cdot\|_{\mathcal{D}}$ induced by the scalar product
	\[
	\langle \varphi, \psi \rangle_{\mathcal{D}} := \int \nabla \varphi \cdot \nabla \psi + a^2 \int \Delta \varphi \Delta \psi
	\]
	and $\|\cdot\|_\mathcal{D}=\sqrt{\langle \cdot, \cdot \rangle_{\mathcal{D}}}$, then $\mathcal{D}$ is a Hilbert space continuously embedded into $D^{1,2}(\mathbb{R}^3)$ and consequently in $L^6(\mathbb{R}^3)$.
	\begin{lem}\label{th2.4}({\cite[Lemma 3.3]{MR3957980}})
		For all $y\in \mathbb{R}^3, \mathcal{K}(\cdot-y)$ solves in the sense of distributions
		\[
		-\Delta \phi+a^2\Delta^2 \phi=4\pi\delta_y.
		\]
		Moreover
		\begin{enumerate}[$(i)$]
			\item if $f\in L_{loc}^1(\mathbb{R}^3)$ and, for a.e. $x\in \mathbb{R}^3$, the map $y\in \mathbb{R}^3\mapsto f(y)/|x-y|$ is summable, then $\mathcal{K}\ast f \in L_{loc}^1(\mathbb{R}^3) $;
			
			\item if $f\in L^p(\mathbb{R}^3)$ with $1\le p< {3/2}$, then $\mathcal{K}\ast f \in L^q(\mathbb{R}^3)$ for $q\in ({3p}/(3-2p),+\infty]$.
		\end{enumerate}
			In both cases $\mathcal{K}\ast f$ solves
			\[-\Delta \phi+a^2\Delta^2 \phi=4\pi f\]
			in the sense of distributions and we have the following distributional derivatives
			\[
			\nabla(\mathcal{K}\ast f)=\nabla(\mathcal{K})\ast f\quad \text{and}\quad \Delta (\mathcal{K}\ast f)=\Delta(\mathcal{K})\ast f\ \ a.e.\ \text{in}\ \mathbb{R}^3.
			\]
	\end{lem}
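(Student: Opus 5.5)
The plan is to check the statement directly from the explicit formula for $\mathcal{K}$. We have $\mathcal{K}(x)=(1-e^{-|x|/a})/|x|$, which is the difference of the Newtonian kernel $1/|x|$ and the Yukawa kernel $e^{-|x|/a}/|x|$. Since $-\Delta+a^2\Delta^2=(-\Delta)(I-a^2\Delta)$, we use the classical distributional identities
\[
-\Delta\Big(\tfrac{1}{|x|}\Big)=4\pi\delta_0,\qquad \Big(-\Delta+\tfrac{1}{a^2}\Big)\Big(\tfrac{e^{-|x|/a}}{|x|}\Big)=4\pi\delta_0 .
\]
Write $Y(x)=e^{-|x|/a}/|x|$. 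The second identity gives $-\Delta Y=4\pi\delta_0-a^{-2}Y$. Hence
\[
-\Delta\mathcal{K}=4\pi\delta_0-4\pi\delta_0+a^{-2}Y=a^{-2}Y .
\]
Then $(I-a^2\Delta)(-\Delta\mathcal{K})=a^{-2}(I-a^2\Delta)Y=a^{-2}\,a^2\,4\pi\delta_0=4\pi\delta_0$. Translating by $y$ gives the first claim. Both identities are verified in the usual way: compute the radial Laplacian away from the origin, then integrate against a test function on $\{|x|>\varepsilon\}$ and let $\varepsilon\to0$, using Green's formula to pick up the flux $4\pi\varphi(0)$.

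For the convolution statements, the key inequality is $0\le\mathcal{K}(x)\le\min\{1/a,\,1/|x|\}$.

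\textbf{Part (i).} The summability hypothesis gives $\mathcal{K}*|f|(x)\le\int|f(y)|/|x-y|\,dy<\infty$ for a.e. $x$. Local integrability then needs a uniform bound on compact sets, which I would get by Fubini: integrate over $x$ in a ball $B$ and split $y$ into near and far regions. This is the most delicate bookkeeping point. If the bare hypothesis proves insufficient, I would follow \cite{MR3957980} and use that $\mathcal{K}$ is bounded and $f\in L^1_{loc}$ for the near part, while the far part is controlled through the a.e. finiteness at one point $x_0$ together with $|x-y|\sim|x_0-y|$ for large $|y|$.

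\textbf{Part (ii).} Split $\mathcal{K}=\mathcal{K}\chi_{B_1}+\mathcal{K}\chi_{B_1^c}$. Since $\mathcal{K}\le 1/a$, the first piece lies in $L^1\cap L^\infty$. The second piece is at most $1/|x|$ and lies in $L^r$ for every $r>3$. Young's inequality with $1+1/q=1/p+1/r$ then gives $\mathcal{K}*f\in L^q$ for all $q$ with $1/q<1/p-2/3$, i.e. $q>3p/(3-2p)$. Endpoint $q=\infty$ follows from Hölder, because $\mathcal{K}\in L^{p'}$ exactly when $p'>3$, i.e. $p<3/2$. The lower bound $q\ge p$ needed for the bounded piece is automatic in this range.

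\textbf{Equation and derivatives.} To show $\mathcal{K}*f$ solves $-\Delta\phi+a^2\Delta^2\phi=4\pi f$, pair with $\varphi\in C_c^\infty$ and apply Fubini, which is legitimate by the integrability from (i) or (ii). This gives
\[
\langle \mathcal{K}*f,(-\Delta+a^2\Delta^2)\varphi\rangle=\int f(y)\,\langle\mathcal{K}(\cdot-y),(-\Delta+a^2\Delta^2)\varphi\rangle\,dy=4\pi\int f\varphi ,
\]
where the last step uses the first claim. The derivative formulas follow the same way. Note that $\nabla\mathcal{K}=O(1/|x|^2)$ and is bounded near $0$ (since $\mathcal{K}(x)=1/a-|x|/(2a^2)+\dots$). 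Also $\Delta\mathcal{K}=-a^{-2}Y$, which lies in $L^1$ and is integrable at the origin. Both kernels are therefore locally integrable and decay at least as fast as $\mathcal{K}$, so exchanging derivative and convolution by Fubini against test functions is justified.

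I expect the main obstacle to be the justification of Fubini in case (i) under the weak hypothesis on $f$; everything else is explicit kernel computation.
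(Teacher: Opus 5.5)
Your proof is correct and complete up to routine write-up. The paper gives no argument of its own for this lemma; it is quoted from d'Avenia--Siciliano \cite{MR3957980}, so your proposal supplies what the paper takes on trust. The factorization $-\Delta+a^2\Delta^2=(I-a^2\Delta)(-\Delta)$, together with $\mathcal{K}=|x|^{-1}-e^{-|x|/a}|x|^{-1}$ (Newton minus Yukawa), is an economical route. It reduces the first claim to two classical fundamental-solution identities. It also spares you the fourth-order Green identity, with its four boundary terms on $\partial B_\varepsilon$, that a direct computation on $\{|x|>\varepsilon\}$ would need. As a by-product it shows that the Dirac masses cancel in $-\Delta\mathcal{K}$. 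Hence $\Delta\mathcal{K}=-a^{-2}e^{-|x|/a}|x|^{-1}$ is a genuine $L^1$ function with no singular part at the origin, which is exactly what the formula $\Delta(\mathcal{K}\ast f)=(\Delta\mathcal{K})\ast f$ requires.

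There are three places to tighten.

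First, in (i), a.e.\ finiteness of $\int|f(y)|\,|x-y|^{-1}\,dy$ does not by itself give $\mathcal{K}\ast f\in L^1_{loc}$. Your near/far splitting is therefore not a fallback but the proof itself. Fix $x_0$ where the hypothesis holds and set $R'=2\max\{R,|x_0|\}$. For $x\in B_R$ the near part is at most $a^{-1}|B_R|\,\|f\|_{L^1(B_{R'})}$, and for $|y|\ge R'$ one has $|x-y|^{-1}\le 3|x_0-y|^{-1}$.

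Second, $\Delta\mathcal{K}$ is not dominated by $\mathcal{K}$ near the origin: it blows up like $a^{-2}|x|^{-1}$, while $\mathcal{K}\to 1/a$. So replace ``decay at least as fast as $\mathcal{K}$'' by what you actually use. For all $z\neq0$ one has $|\nabla\mathcal{K}(z)|+|\Delta\mathcal{K}(z)|\le C|z|^{-1}$, which matches the hypothesis of (i) directly. In case (ii), $\Delta\mathcal{K}\in L^1$ and $\nabla\mathcal{K}\in L^r$ for $r\in(3/2,\infty]$. Young's inequality then makes $|\nabla\mathcal{K}|\ast|f|$ and $|\Delta\mathcal{K}|\ast|f|$ locally integrable, which is what Fubini needs.

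Third, say explicitly that $\mathcal{K}$ is Lipschitz near $0$ (from your expansion), so its distributional gradient coincides with the pointwise one.
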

	By Lemma \ref{th2.4}, if we fix $u \in X$, the unique solution in $\mathcal{D}$ of the second equation in system \eqref{eq:4} is given by
	\begin{equation*}
		\phi_u:= \mathcal{K} \ast u^2.
	\end{equation*}\par
	The following lemma is important and will be used repeatedly throughout this paper.
	\begin{lem}\label{th2.3}({\cite[Lemma 3.4]{MR3957980}})
		For every $u \in X$ we have:
		\begin{enumerate}[$(i)$]
			\item for every $y \in \mathbb{R}^3$, $\phi_{u(\cdot + y)} = \phi_u(\cdot + y)$;
			
			\item $\phi_u \geq 0$;
			
			\item for every $s \in (3, +\infty]$, $\phi_u \in L^s(\mathbb{R}^3) \cap C_0(\mathbb{R}^3)$;
			
			\item for every $s \in (3/2, +\infty]$, $\nabla \phi_u = \nabla K * u^2 \in L^s(\mathbb{R}^3) \cap C_0(\mathbb{R}^3)$;
			
			\item $\phi_u \in \mathcal{D}$;
			
			\item $\|\phi_u\|_6 \leq C \|u\|^2$;
			
			\item $\phi_u$ is the unique minimizer of the functional
			\[
			E(\phi) = \frac{1}{2} \|\nabla \phi\|_2^2 + \frac{a^2}{2} \|\Delta \phi\|_2^2 - \int \phi u^2, \quad \phi \in \mathcal{D}.
			\]
			Moreover
			\item if $v_n \rightharpoonup v$ in $X$, then $\phi_{v_n} \rightharpoonup \phi_v$ in $\mathcal{D}$.
		\end{enumerate}
	\end{lem}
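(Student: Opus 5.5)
The statement just above is Lemma \ref{th2.3}, cited from \cite[Lemma 3.4]{MR3957980}. I would prove it directly from the explicit kernel $\mathcal{K}(x)=(1-e^{-|x|/a})/|x|$, together with Lemma \ref{th2.4} and the Hilbert structure of $\mathcal{D}$.

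\emph{Elementary properties of the kernel.} The function $\mathcal{K}$ is positive, bounded by $\min\{1/a,1/|x|\}$, and radial. Consequently $\mathcal{K}\in L^r(\mathbb{R}^3)$ for every $r\in(3,\infty]$. Its gradient satisfies $|\nabla\mathcal{K}|\le C\min\{1,|x|^{-2}\}$, so $\nabla\mathcal{K}\in L^r$ for every $r\in(3/2,\infty]$.

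\emph{Items (i)--(iv).} Item (i) follows by a change of variables in the convolution. Item (ii) is immediate from $\mathcal{K}>0$. For (iii) and (iv) I would apply Young's inequality with $u^2\in L^1\cap L^3$ (Sobolev). Since $\mathcal{K}\in L^r$ for $r>3$, this gives $\phi_u\in L^s$ for $s>3$, including $s=\infty$. Continuity and vanishing at infinity come from the fact that $\mathcal{K}$ is a uniform limit of $C_c$ functions paired with $u^2\in L^1$, so the convolution lies in $C_0$. The same argument with $\nabla\mathcal{K}$ gives (iv); the identity $\nabla\phi_u=\nabla\mathcal{K}*u^2$ is supplied by Lemma \ref{th2.4}.

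\emph{Items (v)--(vii).} These I would obtain by the Riesz route rather than by computation. The functional $\varphi\mapsto\int\varphi u^2$ is continuous on $\mathcal{D}$, since
\[
\Bigl|\int\varphi u^2\Bigr|\le\|\varphi\|_6\|u\|_{12/5}^2\le C\|\varphi\|_{\mathcal{D}}\|u\|^2 .
\]
Riesz then gives a unique $\tilde\phi\in\mathcal{D}$ with $\langle\tilde\phi,\varphi\rangle_{\mathcal{D}}=4\pi\int\varphi u^2$, and testing with $\tilde\phi$ yields $\|\tilde\phi\|_{\mathcal{D}}\le C\|u\|^2$. By Lemma \ref{th2.4}, $\mathcal{K}*u^2$ is a distributional solution of the same equation.

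\emph{Main obstacle.} The hardest step is identifying $\tilde\phi$ with $\phi_u=\mathcal{K}*u^2$. I would take the difference $w=\mathcal{K}*u^2-\tilde\phi$ and use $w\in L^s$ for some $s>3$ together with $\tilde\phi\in L^6$. Then $w$ is a tempered distribution whose Fourier transform satisfies $(|\xi|^2+a^2|\xi|^4)\hat w=0$, so $\hat w$ is supported at the origin. Hence $w$ is a polynomial, and since it lies in $L^{\max(s,6)}$ it must vanish.

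\emph{Completing (v)--(vii).} Once $\phi_u=\tilde\phi$, item (v) is immediate and (vi) follows from the embedding $\mathcal{D}\hookrightarrow L^6$. For (vii), $E$ is strictly convex and coercive on $\mathcal{D}$. Its Euler equation is the weak form above with $4\pi$ replaced by $1$, i.e.\ a rescaling of the Riesz problem. I expect the original statement intends matching normalizations of $E$ and of the Poisson-type equation, and I would adjust the constant if needed.

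\emph{Item (viii).} If $v_n\rightharpoonup v$ in $X$, then $(v_n)$ is bounded, so $(\phi_{v_n})$ is bounded in $\mathcal{D}$ and a subsequence converges weakly to some $\psi$. Fix $\varphi\in C_c^\infty$. By compactness of $H^1(\Omega)\hookrightarrow L^2(\Omega)$ on $\Omega\supset\operatorname{supp}\varphi$, we get $\int\varphi v_n^2\to\int\varphi v^2$. Passing to the limit in the weak formulation shows that $\psi$ solves the equation for $v$, so $\psi=\phi_v$ by uniqueness. Since the limit is independent of the subsequence, the whole sequence converges weakly.
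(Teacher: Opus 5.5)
The paper gives no proof of this lemma; it imports it verbatim from \cite[Lemma 3.4]{MR3957980}, so there is no in-paper argument to compare your route with. Your self-contained proof is sound in structure:
\begin{itemize}
\item Young's inequality with the explicit decay of $\mathcal{K}$ and $\nabla\mathcal{K}$ gives (iii)--(iv).
\item The Riesz representative, identified with $\mathcal{K}*u^2$ by a Fourier/Liouville argument, gives (v)--(vi).
\item A subsequence-plus-uniqueness argument gives (viii).
\end{itemize}

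Two justifications need small repairs. First, $\nabla\mathcal{K}$ is not continuous at the origin. Indeed $\mathcal{K}(x)=\frac1a-\frac{|x|}{2a^2}+O(|x|^2)$, so $\nabla\mathcal{K}(x)=-\frac{1}{2a^2}\frac{x}{|x|}+O(|x|)$ has no limit at $0$. It is therefore not a uniform limit of $C_c$ functions, and ``the same argument'' does not give $\nabla\phi_u\in C_0(\mathbb{R}^3)$. Use instead that $\nabla\mathcal{K}\in L^2$ and $u^2\in L^2$ (since $u\in L^4$). A convolution of two $L^2$ functions lies in $C_0$, as does more generally any $L^p*L^{p'}$ convolution with $1<p<\infty$.

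Second, in the identification step $w$ lies in $L^s+L^6$, not in $L^{\max(s,6)}$. Just take $s=6$ in (iii), so that $w\in L^6(\mathbb{R}^3)$; a polynomial in $L^6(\mathbb{R}^3)$ is zero.

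Your suspicion about (vii) is correct, and it is a defect of the statement rather than of your proof. As written, $E(\phi)=\frac12\|\phi\|_{\mathcal{D}}^2-\int\phi u^2$ has Euler--Lagrange equation $\langle\phi,\varphi\rangle_{\mathcal{D}}=\int\varphi u^2$ for all $\varphi\in\mathcal{D}$. Its unique minimizer is therefore $\phi_u/(4\pi)$, not $\phi_u$ (unless $u=0$).

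The functional actually minimized by $\phi_u$ is $\frac12\|\phi\|_{\mathcal{D}}^2-4\pi\int\phi u^2$. This is $-8\pi$ times the $\phi$-dependent part $\frac12\int\phi u^2-\frac{1}{16\pi}\|\phi\|_{\mathcal{D}}^2$ of $\Phi(u,\cdot)$, which is the normalization consistent with the reduction $J(u)=\Phi(u,\phi_u)$. Item (vii) is never used later in the paper, so nothing downstream is affected.
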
\par
	
	\section{Proof of Theorems \ref{th1.1} and \ref{th1.2}} 
	In this section, we verify the $(PS)$ condition and compute the critical groups for the functional $J$. Under assumption $(V_1)$, the functional \eqref{f2.3} can be rewritten as
	\begin{equation*}
		J(u)=\frac{1}{2}\left(\|u^+\|^2-\|u^-\|^2\right)+\frac{1}{4}\int\phi_u u^2-\int F(x,u),
	\end{equation*}
	where $u^\pm$ is the orthogonal projection of $u$ on $X^\pm$. By the growth condition of the nonlinearity $f$ and the properties of $\phi_u$, it is easy to verify that the functional $J$ is well defined and of class $C^1$ on $H^1(\mathbb{R}^3)$. Then for any $u,v \in H^1(\mathbb{R}^3)$, the Fre\'chet derivative of $J$ is
	\begin{equation*}
		\langle J'(u),v\rangle=\int\nabla u\cdot\nabla v+\int V(x)uv+\int\phi_u uv- \int f(x,u)v.
	\end{equation*}As previously stated, solving system \eqref{eq:4} is equivalent to finding critical points of the functional $J$. Thus, to prove Theorems \ref{th1.1} and \ref{th1.2}, we first show that the functional $J$ satisfies $(PS)$ condition.
	\subsection{Palais-Smale condition}
	\begin{lem}\label{lem3.1}
		There exists $a_0>0$ such that, for every $u \in X$,
		\begin{equation}\label{f3.11}
			0\leq\int \phi_u u^2 dx \leq a_0\|u\|^4.
		\end{equation}
	\end{lem}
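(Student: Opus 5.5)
Nonnegativity is immediate from Lemma \ref{th2.3}$(ii)$: since $\phi_u\ge 0$ and $u^2\ge 0$ pointwise, we get $\int\phi_u u^2\ge 0$. For the upper bound we estimate $\int\phi_u u^2$ by H\"older's inequality with exponents $6$ and $6/5$:
\[
\int \phi_u u^2 \le \|\phi_u\|_6\,\|u^2\|_{6/5}=\|\phi_u\|_6\,\|u\|_{12/5}^2 .
\]
The Sobolev embedding $H^1(\mathbb{R}^3)\hookrightarrow L^{12/5}(\mathbb{R}^3)$ applies because $12/5\in[2,6]$. Together with Lemma \ref{th2.3}$(vi)$, which gives $\|\phi_u\|_6\le C\|u\|^2$, this yields $\int\phi_u u^2\le C'\|u\|^4$.

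One point needs care: the norm $\|\cdot\|$ on $X$ was replaced under $(V_1)$ by an equivalent norm adapted to $Q$. Since the two norms are equivalent, the constant only changes by a fixed factor, and we obtain some $a_0>0$ independent of $u$.

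If one prefers not to rely on $(vi)$ as a black box, the bound on $\|\phi_u\|_6$ can be recovered directly. Test the equation $-\Delta\phi_u+a^2\Delta^2\phi_u=4\pi u^2$ with $\phi_u$ itself, which is legitimate by Lemma \ref{th2.3}$(v)$. This gives
\[
\|\phi_u\|_{\mathcal D}^2=4\pi\int\phi_u u^2\le 4\pi\|\phi_u\|_6\|u\|_{12/5}^2 .
\]
Combining this with the continuous embedding $\mathcal D\hookrightarrow L^6$, so that $\|\phi_u\|_6\le S\|\phi_u\|_{\mathcal D}$, we get $\|\phi_u\|_{\mathcal D}\le C\|u\|_{12/5}^2$, and the estimate follows as above. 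Alternatively, since $0\le\mathcal K(x)\le 1/|x|$, one has $\int\phi_u u^2\le\iint u^2(x)u^2(y)/|x-y|$, and the Hardy--Littlewood--Sobolev inequality bounds this by $C\|u\|_{12/5}^4$. None of these steps is a real obstacle; the only subtlety is keeping track of the norm equivalence.
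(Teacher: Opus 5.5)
Your proof is correct. Your last alternative is exactly the paper's argument, but your primary route is a different one.

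\textbf{The paper's route.} It uses the pointwise bound $0\le\mathcal{K}(x)\le 1/|x|$ to dominate $\int\phi_u u^2$ by the Coulomb energy $\iint u^2(x)u^2(y)|x-y|^{-1}\,dx\,dy$. It then applies Hardy--Littlewood--Sobolev and $H^1\hookrightarrow L^{12/5}$.

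\textbf{Your route.} You use H\"older with exponents $6$ and $6/5$ together with the cited bound $\|\phi_u\|_6\le C\|u\|^2$ from Lemma~\ref{th2.3}$(vi)$. In your self-contained version you instead use the energy identity $\|\phi_u\|_{\mathcal{D}}^2=4\pi\int\phi_u u^2$, combined with $\|\phi_u\|_6\le S\|\nabla\phi_u\|_2\le S\|\phi_u\|_{\mathcal{D}}$. The identity is valid because $\phi_u$ is the Riesz solution in $\mathcal{D}$, so the weak formulation holds for every test function in $\mathcal{D}$.

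\textbf{What each buys.} The paper's argument is a one-liner. It makes explicit that the Bopp--Podolsky term is controlled by the Schr\"odinger--Poisson one, with a constant independent of $a$. Your energy-identity argument never uses the explicit kernel or HLS, only the variational structure of the second equation and a Sobolev embedding. It also gives an $a$-independent constant, namely $\int\phi_u u^2\le 4\pi S^2\|u\|_{12/5}^4$. Invoking $(vi)$ as a black box, by contrast, inherits whatever $a$-dependence the cited constant carries.

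Your remark about the equivalent norm adapted to $Q$ addresses a point the paper passes over silently, though it is harmless.
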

	
	\begin{proof}
		By the Hardy-Littlewood-Sobolev inequality \cite{LiebLoss2001}, we have
		\[
		0\leq\int \phi_u u^2  dx \leq \int \left( \frac{1}{|x|} * u^2 \right) u^2  dx \leq C \|u\|_{\frac{12}{5}}^4\leq a_0\|u\|^4.
		\]
	\end{proof}
	
	\begin{lem}\label{lem3.2}
		Suppose that $(V_1)$ and $(f_2)$ hold. Let $\{u_n\}\subset X$ be a $(PS)$ sequence of $J$, that is
		\[
		\sup_n |J(u_n)| < \infty,\qquad J'(u_n) \to 0.
		\]
		Then $\{u_n\}$ is bounded in $X$.
	\end{lem}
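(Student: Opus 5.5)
We argue by contradiction, in the spirit of Liu and Wu \cite{MR3656292}. The first step is to combine the functional with its derivative to kill the quadratic part. Using $(f_2)$ we get
\[
J(u_n)-\tfrac14\langle J'(u_n),u_n\rangle=\tfrac14\left(\|u_n^+\|^2-\|u_n^-\|^2\right)+\int\Big(\tfrac14 f(x,u_n)u_n-F(x,u_n)\Big)\ge \tfrac14\left(\|u_n^+\|^2-\|u_n^-\|^2\right).
\]
The nonlocal term cancels exactly, since it carries the coefficient $\frac14$ in $J$ and $1$ in $J'$. This gives
\[
\|u_n^+\|^2\le \|u_n^-\|^2+C+o(1)\|u_n\|.
\]
So it suffices to bound $\|u_n^-\|$. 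Suppose instead that $\|u_n\|\to\infty$ and set $v_n=u_n/\|u_n\|$. The displayed inequality gives $\liminf\|v_n^-\|^2\ge \frac12$. Since $X^-$ is finite dimensional, after passing to a subsequence $v_n^-\to v^-\neq0$ strongly. Also $v_n\rightharpoonup v$ in $X$ and $v_n\to v$ a.e., and the component of $v$ in $X^-$ is $v^-$, so $v\neq 0$.

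The second step is to derive a contradiction from $J(u_n)$ being bounded. Divide $J(u_n)$ by $\|u_n\|^4$:
\[
o(1)=\frac{J(u_n)}{\|u_n\|^4}=o(1)+\frac14\int\phi_{v_n}v_n^2-\int\frac{F(x,u_n)}{u_n^4}v_n^4 .
\]
The quadratic part is $O(\|u_n\|^{-2})$. By Lemma \ref{lem3.1}, $\int\phi_{v_n}v_n^2\le a_0$, so the left side is bounded above by $a_0/4+o(1)$. On the set $\{v\neq0\}$, which has positive measure, we have $|u_n|\to\infty$. Then $(f_2)$ gives $F(x,u_n)v_n^4/u_n^4\to+\infty$ there, and $F\ge0$ everywhere. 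Fatou's lemma then forces $\int F(x,u_n)/\|u_n\|^4\to+\infty$. Hence $J(u_n)/\|u_n\|^4\to-\infty$, which is the desired contradiction.

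The main obstacle is the first step: showing $v\ne0$. The indefinite part is what prevents the usual Ambrosetti--Rabinowitz argument from bounding $\|u_n\|$ directly. I expect the finite dimensionality of $X^-$ from $(V_1)$ to handle this through the inequality $\|u_n^+\|^2\le\|u_n^-\|^2+C+o(1)\|u_n\|$, which prevents all the mass of $v_n$ from escaping into $X^+$. I should also check that the weak limit in $X^-$ coincides with the strong limit of the projections; this holds because the projection is continuous and has finite rank.

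The other point to check is the Fatou step with weight $\|u_n\|^4$. The nonlocal term is harmless there only because it scales like $\|u\|^4$ with a bounded constant. Nonnegativity of $F$ from $(f_2)$ lets us drop the region $\{v=0\}$.
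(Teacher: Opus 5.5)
Your proof is correct and is essentially the paper's argument. You normalize $v_n=u_n/\|u_n\|$, use $J(u_n)-\frac14\langle J'(u_n),u_n\rangle$ together with $\dim X^-<\infty$ to exclude $v=0$ (the paper treats this as a separate case), and then reach a contradiction via Fatou's lemma, the superquartic growth in $(f_2)$ and Lemma~\ref{lem3.1}. The one difference is minor: in the case $v\neq 0$ the paper divides $\langle J'(u_n),u_n\rangle$ by $\|u_n\|^4$ and uses $4F\le tf$, whereas you divide $J(u_n)$ itself, which needs only $F\ge 0$.
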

	\begin{proof}
		We argue by contradiction. Suppose $\{u_n\} \subset X$ is a $(PS)$ sequence, up to a subsequence, still denoted by $u_n$, we may assume \(\|u_n\| \to \infty\). Let \(v_n = \|u_n\|^{-1} u_n\), then
		\[
		v_n = v_n^+ + v_n^- \rightharpoonup v = v^+ + v^- \in X, \quad v_n^\pm, v^\pm \in X^\pm.
		\]
		
		If \(v = 0\), then \(v_n^- \to v^- = 0\) because \(\dim X^- < \infty\). Since
		\[
		\|v_n^+\|^2 + \|v_n^-\|^2 = 1,
		\]
		for \(n\) large enough we have
		\[
		\|v_n^+\|^2 - \|v_n^-\|^2 \geq \frac{1}{4}.
		\]
		Now, using \((f_2)\), we deduce that for \(n\) large enough
		\begin{align*}
			1+\sup |J(u_n)|+\|u_n\| &\ge J(u_n) - \frac{1}{4} \langle J'(u_n), u_n \rangle \\
			&= \frac{1}{4} \|u_n\|^2 \left( \|v_n^+\|^2 - \|v_n^-\|^2 \right) + \int \left( \frac{1}{4} f(x, u_n) u_n - F(x, u_n) \right) \\
			&\geq  \frac{1}{16}\|u_n\|^2,
		\end{align*}
		contradicting $\|u_n\|\rightarrow+\infty$.
		
		If $v\neq 0$, the set $A=\{x\in \mathbb{R}^3: v(x)\neq 0\}$ has positive Lebesgue measure. For $x\in A$, we have $|u_n(x)|\rightarrow+\infty$ and
		\begin{align*}
			\frac{F(x, u_n(x))}{u_n^4(x)} v_n^4(x) \to +\infty,
		\end{align*}
		thanks to \eqref{eq:5}. By the Fatou's lemma, for $x\in A$ we get
		\begin{equation}\label{f3.2}
		\int\frac{F(x, u_n)}{u_n^4} v_n^4 \ge \int_A \frac{F(x, u_n)}{u_n^4} v_n^4 \to +\infty.
		\end{equation}
		It follows from $\langle J'(u_n),u_n\rangle\rightarrow 0$ and  Lemma \ref{lem3.1} that
		\begin{align*}
			\int_A \frac{4F(x,u_n)}{u_n^4} v_n^4&\leq\int_A \frac{f(x,u_n)u_n}{u_n^4} v_n^4= \frac{1}{\|u_n\|^4} \int_A f(x,u_n)u_n \\
			&\leq \frac{1}{\|u_n\|^4} \int f(x,u_n)u_n \\
			&= \frac{1}{\|u_n\|^4} \left(\int \left(|\nabla u_n|^2 + V(x)u_n^2\right) + \int \phi_{u_n} u_n^2 - \langle J'(u_n),u_n\rangle\right) \\
			&\leq 2+ \frac{1}{\|u_n\|^4} \int \phi_{u_n} u_n^2 \\
			&\leq 2 + a_0,
		\end{align*}
		a contradiction to \eqref{f3.2}. Thus $\{u_n\}$ is bounded in $X$.
	\end{proof}
	
	To get a convergent subsequence of the $(PS)$ sequences, we need some compact properties of operators involving $\phi_u$. Consider the $C^1$-functional $\mathcal{M}: X\rightarrow\mathbb{R}$
	\[\mathcal{M}(u)=\frac{1}{4}\int \phi_u u^2,\]
	where $\phi_u=  \mathcal{K}\ast u^2=\int_{\mathbb{R}^3} \frac{1 - e^\frac{-|x-y|}{a}}{|x-y|} u^2(y)dy$. It is known that the derivative of $\mathcal{M}$ is given by
	\[\langle \mathcal{M}'(u),v\rangle=\int \phi_u uv,\]
	for all $u$, $v\in X$.

	\begin{lem}\label{lem3.3}
		The functional $\mathcal{M}$ is weakly lower semi-continuous. Moreover, $\mathcal{M}' : X \to X^*$ is weakly sequentially continuous, where $X^* = H^{-1}(\mathbb{R}^3) $ is the dual space of $X= H^{1}(\mathbb{R}^3)$.
	\end{lem}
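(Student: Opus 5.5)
The plan is to obtain both assertions from the weak convergence of $\phi_{u_n}$ in $\mathcal{D}$ (Lemma \ref{th2.3}(viii)) together with local compactness of Sobolev embeddings. Throughout, let $u_n \rightharpoonup u$ in $X$. Passing to a subsequence, we may assume $u_n \to u$ in $L^r_{loc}(\mathbb{R}^3)$ for $r\in[1,6)$ and $u_n\to u$ a.e. Since the limit will be identified independently of the subsequence, this costs nothing.

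\textbf{Weak lower semicontinuity of $\mathcal{M}$.} The quickest route uses the minimization characterization in Lemma \ref{th2.3}(vii). Testing the identity $-\Delta\phi_u+a^2\Delta^2\phi_u=4\pi u^2$ against $\phi_u$ gives $\|\phi_u\|_{\mathcal{D}}^2=4\pi\int\phi_u u^2$. Hence $\mathcal{M}(u)=\frac{1}{16\pi}\|\phi_u\|_{\mathcal{D}}^2$, with the normalization fixed by the $4\pi$ factor. By Lemma \ref{th2.3}(viii), $\phi_{u_n}\rightharpoonup\phi_u$ in $\mathcal{D}$. Weak lower semicontinuity of the Hilbert norm then yields $\mathcal{M}(u)\le\liminf\mathcal{M}(u_n)$. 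As a fallback, Fatou's lemma applied to the nonnegative double integral $\iint \mathcal{K}(x-y)u_n^2(y)u_n^2(x)$, using a.e. convergence, gives the same conclusion.

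\textbf{Weak sequential continuity of $\mathcal{M}'$.} We must show $\int\phi_{u_n}u_n v\to\int\phi_u uv$ for every $v\in X$. By density and a uniform bound, it suffices to treat $v\in C_c^\infty(\mathbb{R}^3)$. Indeed, Hölder's inequality with Lemma \ref{th2.3}(vi) gives
\[
\Big|\int\phi_w wv\Big|\le\|\phi_w\|_6\|w\|_{12/5}\|v\|_{12/5}\le C\|w\|^3\|v\|,
\]
and $\{u_n\}$ is bounded in $X$. For $v$ supported in a compact set $K$, split
\[
\int\phi_{u_n}u_nv-\int\phi_u uv=\int_K\phi_{u_n}(u_n-u)v+\int_K(\phi_{u_n}-\phi_u)uv .
\]
The first term is bounded by $\|\phi_{u_n}\|_6\|u_n-u\|_{L^{12/5}(K)}\|v\|_{12/5}\to0$, using strong local convergence and boundedness of $\phi_{u_n}$ in $L^6$. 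For the second term, $\phi_{u_n}\rightharpoonup\phi_u$ in $\mathcal{D}\hookrightarrow L^6$, hence weakly in $L^6$. Moreover $uv\in L^{6/5}$ because $u\in L^{2}\cap L^6$ and $v\in L^\infty_c$. So the second term tends to $0$ by weak convergence. Thus $\langle\mathcal{M}'(u_n),v\rangle\to\langle\mathcal{M}'(u),v\rangle$ for all $v\in X$, which is weak-$*$ (equivalently weak, by reflexivity) convergence in $X^*$.

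The main point requiring care is the density step: the uniform bound $\|\mathcal{M}'(u_n)\|_{X^*}\le C$ must be in place so that convergence on $C_c^\infty$ extends to all of $X$. This is handled by the Hölder estimate above. Everything else is routine given Lemma \ref{th2.3}.
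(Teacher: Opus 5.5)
Your proof is correct. It differs from the paper's mainly in that it never needs pointwise convergence of $\phi_{u_n}$.

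\textbf{Lower semicontinuity.} The paper extracts an a.e.\ convergent subsequence and states, without further argument, that $\phi_{u_n}\to\phi_u$ a.e. It then applies Fatou's lemma to $\phi_{u_n}u_n^2$. You instead use the identity $\mathcal{M}(u)=\frac{1}{16\pi}\|\phi_u\|_{\mathcal{D}}^2$, Lemma \ref{th2.3}(viii), and weak lower semicontinuity of the Hilbert norm. This needs no subsequence and no a.e.\ information on $\phi_{u_n}$. Only the positivity of the constant matters, so the $4\pi$ mismatch between the second equation and the functional $E$ in Lemma \ref{th2.3}(vii) is harmless. Your Fatou fallback on the double integral over $\mathbb{R}^6$ is also cleaner than the paper's version. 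There, though, you should first pass to a subsequence realizing the $\liminf$, rather than appealing to independence of the limit.

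\textbf{Weak continuity of $\mathcal{M}'$.} Both arguments use the same splitting $I_1+I_2$. Both treat $I_2$ identically, via $\phi_{u_n}\rightharpoonup\phi_u$ in $L^6$ tested against $uv\in L^{6/5}$. The difference is in $I_1$:
\begin{itemize}
\item The paper shows $\phi_{u_n}(u_n-u)$ is bounded in $L^2$ and tends to $0$ a.e., hence weakly in $L^2$. This handles every $v\in X$ at once, but it again relies on the a.e.\ convergence of $\phi_{u_n}$.
\item You first reduce to $v\in C_c^\infty(\mathbb{R}^3)$ using the uniform bound $|\langle\mathcal{M}'(w),v\rangle|\le C\|w\|^3\|v\|$. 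You then use strong $L^{12/5}_{loc}$ convergence, trading the a.e.\ argument for a density step.
\end{itemize}
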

	
	\begin{proof}
		Let \(\{u_n\}\) be a sequence in \(X\) such that \(u_n \rightharpoonup u\) in \(X\), we need to show
		\[
		\mathcal{M}(u) \leq \liminf_{n\rightarrow\infty} \mathcal{M}(u_n), \quad \langle \mathcal{M}'(u_n), v \rangle \to \langle \mathcal{M}'(u), v \rangle,
		\]
		for all \(v \in X\).
		
		Since \(u_n \rightharpoonup u\) in \(X\), up to a subsequence, still denoted by $u_n$, by the compactness of the embedding \(X \hookrightarrow L^s_{loc}(\mathbb{R}^3)\) with $s\in[2,6)$, we have
		\[
		u_n \to u \quad \text{in } L^s_{loc}(\mathbb{R}^3), \qquad u_n(x) \to u(x) \quad \text{a.e. in } \mathbb{R}^3.
		\]
		This implies that ${u_n}^2\rightarrow u^2$ a.e. in \(\mathbb{R}^3\) and $\phi_{u_n}\rightarrow \phi_u$ a.e. in \(\mathbb{R}^3\). Therefore, by the properties of $\phi_u\geq 0$ and the Fatou's lemma
		\[
		\mathcal{M}(u) = \frac{1}{4} \int  \phi_uu^2\leq \frac{1}{4} \liminf_{n\rightarrow\infty} \int\phi_{u_n}u_n^2 = \liminf_{n\rightarrow\infty} \mathcal{M}(u_n).
		\]
		Hence \(\mathcal{M}\) is weakly lower semi-continuous.
		
		To prove the weak sequential continuity of \(\mathcal{M}'\), we will prove that
		\begin{align*}
			\langle \mathcal{M}'(u_n) - \mathcal{M}'(u), v \rangle
			&= \int \left(\phi_{u_n}u_n v-\phi_{u}u v \right)\notag\\
			&=\int \phi_{u_n}(u_n-u)v+\int (\phi_{u_n}-\phi_u)uv\notag\\
			&=:I_1+I_2\rightarrow 0.
		\end{align*}
		By Lemmas \ref{th2.3} and \ref{lem3.1},  H\"older inequality and Sobolev embedding inequalities, we have
		\begin{align*}
			\int |\phi_{u_n}(u_n-u)|^2
			&\leq \|\phi_{u_n}\|_6^2 \|u_n - u\|_3^2 \notag\\
			&\leq C \|u_n\|^4 \|u_n - u\|^2 \leq C.
		\end{align*}
		Combining \(u_n \to u\) a.e. in \(\mathbb{R}^3\) and \(\phi_{u_n} \to \phi_u\) a.e. in \(\mathbb{R}^3\), we have \(\phi_{u_n}(u_n - u) \rightharpoonup 0\) in \(L^2(\mathbb{R}^3)\). Thus
		\begin{align}\label{I1}
			I_1=\int \phi_{u_n} (u_n - u) v \to 0.
		\end{align}
		Since $\phi_{u_n}\rightharpoonup \phi_{u}$ in $L^6(\mathbb{R}^3)$ and $uv \in L^\frac{6}{5}(\mathbb{R}^3)$, then
		\begin{align}\label{I2}
			I_2=\int \left( \phi_{u_n} - \phi_u \right) u v \to 0.
		\end{align}
		From \eqref{I1} and \eqref{I2}, for \(v \in X\), we have
		\[
		\langle \mathcal{M}'(u_n), v \rangle \to \langle \mathcal{M}'(u), v \rangle.
		\]
		Therefore, the proof of the lemma is complete.
	\end{proof}
	
	\begin{lem}\label{lem3.4}
		Let \( u_n \rightharpoonup u \) in \( X \), then
		\[
		\liminf_{n\rightarrow\infty}\int \phi_{u_n} u_n (u_n - u) \geq 0.
		\]
	\end{lem}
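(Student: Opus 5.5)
The plan is to exploit the convexity of the quadratic structure behind $\mathcal{M}$. The kernel $\mathcal{K}$ is positive definite, since its Fourier transform is $4\pi/(|\xi|^2(1+a^2|\xi|^2))>0$. Hence the bilinear form
\[
B(f,g)=\int\int \mathcal{K}(x-y)f(x)g(y)\,dx\,dy
\]
satisfies $B(h,h)\ge 0$ for all admissible $h$. Equivalently, from the $\mathcal{D}$-formulation, $B(f,g)=\frac{1}{4\pi}\langle \mathcal{K}\ast f,\mathcal{K}\ast g\rangle_{\mathcal{D}}$, because $\mathcal{K}\ast f$ solves $-\Delta\phi+a^2\Delta^2\phi=4\pi f$ by Lemma \ref{th2.4}. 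This variational characterization gives positivity directly, without any Fourier computation.

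Next I rewrite the integrand algebraically. Since $u_n(u_n-u)=\frac12(u_n^2-u^2)+\frac12(u_n-u)^2$, we get
\[
\int \phi_{u_n}u_n(u_n-u)=\frac12\int\phi_{u_n}(u_n^2-u^2)+\frac12\int\phi_{u_n}(u_n-u)^2 .
\]
The second term is nonnegative because $\phi_{u_n}\ge0$ by Lemma \ref{th2.3}(ii), so it can be dropped. For the first term, write $\phi_{u_n}=B(u_n^2,\cdot)$. Then
\[
\int\phi_{u_n}(u_n^2-u^2)=B(u_n^2,u_n^2)-B(u_n^2,u^2)=B(u_n^2-u^2,u_n^2-u^2)+B(u^2,u_n^2-u^2).
\]
The first piece is $\ge0$ by positive definiteness; concretely it equals $\frac1{4\pi}\|\phi_{u_n}-\phi_u\|_{\mathcal{D}}^2$. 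The remaining piece is $\int\phi_u(u_n^2-u^2)=\frac1{4\pi}\langle\phi_u,\phi_{u_n}-\phi_u\rangle_{\mathcal{D}}$. This tends to $0$ by Lemma \ref{th2.3}(viii), since $\phi_{u_n}\rightharpoonup\phi_u$ in $\mathcal{D}$. Combining the three pieces yields $\liminf\ge 0$.

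The main obstacle is justifying the identity $\int\phi_v w^2=\frac1{4\pi}\langle\phi_v,\phi_w\rangle_{\mathcal{D}}$ for $v,w\in X$. To obtain it, test the weak equation for $\phi_w$ with $\phi_v\in\mathcal{D}$, using Lemma \ref{th2.3}(v) and the fact that $\phi_w$ is the minimizer in (vii), whose Euler–Lagrange equation is $\langle\phi_w,\psi\rangle_{\mathcal{D}}=4\pi\int w^2\psi$ after normalizing constants. The pairing is well defined because $\psi\in L^6$ and $w^2\in L^{6/5}$. If the normalization of the $E$-functional constants is off, I would instead argue directly via Fourier positivity of $\hat{\mathcal{K}}$, with $u_n^2-u^2\in L^1\cap L^{3}$.
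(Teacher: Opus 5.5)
Your proof is correct, but it takes a different route from the paper's.

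The paper deduces the lemma from Lemma \ref{lem3.3}. It writes $\int\phi_{u_n}u_n(u_n-u)=4\mathcal{M}(u_n)-\langle\mathcal{M}'(u_n),u\rangle$ and combines two facts:
the weak lower semicontinuity of $\mathcal{M}$, obtained by Fatou's lemma along a subsequence with $u_n\to u$ and $\phi_{u_n}\to\phi_u$ a.e.; and
the weak sequential continuity of $\mathcal{M}'$, obtained from $L^2$-boundedness and a.e.\ convergence of $\phi_{u_n}(u_n-u)$.

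You instead use the Hilbert structure of $\mathcal{D}$. The weak form $\langle\phi_w,\psi\rangle_{\mathcal{D}}=4\pi\int w^2\psi$ turns your splitting into the exact identity
\[
\int\phi_{u_n}u_n(u_n-u)=\frac12\int\phi_{u_n}(u_n-u)^2+\frac{1}{8\pi}\|\phi_{u_n}-\phi_u\|_{\mathcal{D}}^2+\frac{1}{8\pi}\langle\phi_u,\phi_{u_n}-\phi_u\rangle_{\mathcal{D}}.
\]
The first two terms are nonnegative, and the last is $o(1)$ by Lemma \ref{th2.3}(viii).

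What your route buys:
it bypasses Lemma \ref{lem3.3} altogether;
it needs neither subsequence extraction nor a.e.\ convergence of $\phi_{u_n}$, which the paper asserts with little justification;
it quantifies the defect explicitly;
the same identity gives $\mathcal{M}(u)=\frac{1}{16\pi}\|\phi_u\|_{\mathcal{D}}^2$, so weak lower semicontinuity of $\mathcal{M}$ is simply that of the $\mathcal{D}$-norm.

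What the paper's route buys: it is abstract. It applies to any $4$-homogeneous $C^1$ functional that is weakly lower semicontinuous with weakly sequentially continuous derivative, without invoking positive definiteness of the kernel.

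On your normalization worry: it is justified. As printed, the Euler--Lagrange equation of $E$ in Lemma \ref{th2.3}(vii) reads $\langle\phi,\psi\rangle_{\mathcal{D}}=\int u^2\psi$, which is off by the factor $4\pi$ from the second equation. This is harmless, since only the positivity of the constant matters. The weak form with $4\pi$ follows directly from $\phi_w$ being the Riesz representative in $\mathcal{D}$ of $\psi\mapsto 4\pi\int w^2\psi$, so the Fourier fallback is unnecessary.
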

	
	\begin{proof}
		Applying Lemma \ref{lem3.3}, we have
		\[
		\liminf_{n\rightarrow\infty}\mathcal{M}(u_n) \geq \mathcal{M}(u), \quad \lim_{n\rightarrow\infty} \langle \mathcal{M}'(u_n), u \rangle = \langle \mathcal{M}'(u), u \rangle.
		\]
		Therefore
		\begin{align*}
			\liminf_{n\rightarrow\infty} \int \phi_{u_n} u_n (u_n - u)
			&= \liminf_{n\rightarrow\infty} \left( \int \phi_{u_n} u_n^2 - \int \phi_{u_n} u_n u \right) \\
			&= \liminf_{n\rightarrow\infty}\left( 4\mathcal{M}(u_n) - \langle \mathcal{M}'(u_n), u \rangle \right) \\
			&\geq 4\mathcal{M}(u) - \langle \mathcal{M}'(u), u \rangle = 0.
		\end{align*}
	\end{proof}
	
	\begin{lem}\label{lem3.5}
		Assume $(V_1)$, $(f_2)$ and $(f_3)$ are satisfied, the functional $J$ satisfies the (PS) condition.
	\end{lem}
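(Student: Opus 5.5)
Let $\{u_n\}\subset X$ be a $(PS)$ sequence for $J$. By Lemma \ref{lem3.2}, $\{u_n\}$ is bounded in $X$. Passing to a subsequence, $u_n\rightharpoonup u$ in $X$, $u_n\to u$ in $L^s_{loc}(\mathbb{R}^3)$ for $s\in[2,6)$, and $u_n\to u$ a.e. Since $\dim X^-<\infty$, we also have $u_n^-\to u^-$ strongly. It therefore suffices to show that $u_n^+\to u^+$ in $X$. We will test $J'(u_n)$ against $u_n-u$ and use the splitting $\|u_n^+-u^+\|^2-\|u_n^--u^-\|^2=Q(u_n-u)$:
\begin{align*}
o(1)&=\langle J'(u_n)-J'(u),u_n-u\rangle\\
&=\|u_n^+-u^+\|^2-\|u_n^--u^-\|^2+\int\big(\phi_{u_n}u_n-\phi_u u\big)(u_n-u)-\int\big(f(x,u_n)-f(x,u)\big)(u_n-u).
\end{align*}
Here $\langle J'(u),u_n-u\rangle\to0$ because $u_n-u\rightharpoonup0$. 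The middle Poisson-type term splits as $\int\phi_{u_n}u_n(u_n-u)-\int\phi_u u(u_n-u)$. The second piece tends to $0$ by weak convergence, since $\phi_u u\in L^{6/5}\cdot$-type duality, i.e. $v\mapsto\int\phi_u uv$ is in $X^*$. For the first piece, Lemma \ref{lem3.4} gives $\liminf\ge0$. This yields
\[
\limsup_{n\to\infty}\|u_n^+-u^+\|^2\le \limsup_{n\to\infty}\int\big(f(x,u_n)-f(x,u)\big)(u_n-u),
\]
so only one sign of the nonlocal term is needed, and it is exactly the one provided.

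The main step, and the expected obstacle, is proving $\int f(x,u_n)(u_n-u)\to0$ and $\int f(x,u)(u_n-u)\to0$ without a compact embedding. The second follows directly from weak convergence, since $f(x,u)\in X^*$ by $(f_1)$. For the first, fix $\varepsilon>0$ and split $\mathbb{R}^3$ into a ball $B_R$ and its complement, and split values of $|u_n|$ into small, moderate and large. The proposal uses the following bounds.

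\textbf{Large values.} By $(f_1)$, $|f(x,t)|\le\varepsilon|t|+C_\varepsilon|t|^5$. On the set $\{|u_n|>r\}$ with $r$ large, $|t|^5\le r^{-?}$-type improvements are not available directly, so instead we use the subcritical interpolation $|f(x,t)|\le\varepsilon(|t|+|t|^5)$ for $|t|\ge r$. This is not implied by $(f_1)$ unless $f$ is subcritical. The plan is therefore to handle the critical tail as follows. On $B_R$, the local compactness in $L^s_{loc}$ for $s<6$ together with the bound $|f(x,t)|\le C(|t|+|t|^5)$ gives convergence via Vitali/uniform integrability of $f(x,u_n)$ in $L^{6/5}$-type norms paired with $u_n-u\to0$ in $L^2(B_R)$ and bounded in $L^6$. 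Since this is delicate for the pure critical power, I would first try to use $(f_3)$ to reduce everything to a bounded region and then rely on local compactness.

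\textbf{Outside $B_R$.} Use $(f_3)$: for fixed $r$, choose $R$ so that $|f(x,t)|\le\varepsilon|t|$ whenever $|x|\ge R$ and $|t|\le r$. On $\{|x|\ge R,\ |u_n|\le r\}$ the integral is then bounded by $\varepsilon\|u_n\|_2\|u_n-u\|_2\le C\varepsilon$. On $\{|u_n|>r\}$ we use $(f_1)$ and the fact that $|u_n|\le |u_n|^{p}r^{1-p}$ there, so the $|t|$ term is $\le r^{-1}\int|u_n|^2|u_n-u|\le Cr^{-1}$, while the $|t|^5$ term is bounded by $C$ times the $L^6$ mass of $u_n$ on $\{|u_n|>r\}$. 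That mass is small uniformly only with extra care, so in practice I would combine $(f_2)$ with the $(PS)$ information to bound $\int f(x,u_n)u_n$.

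Altogether, this gives $\limsup\|u_n^+-u^+\|^2\le C\varepsilon$, hence $u_n\to u$ strongly. If the critical-growth tail control fails, I would restrict to the splitting above and invoke the argument of Liu--Wu \cite{MR3656292}, which this step mirrors.
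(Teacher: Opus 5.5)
Your reduction is correct and is essentially the paper's. You test with $u_n-u$, use $\dim X^-<\infty$, and apply Lemma~\ref{lem3.4} to the nonlocal term, rightly noting that only $\liminf\ge 0$ is needed. Everything then comes down to
\[
\limsup_{n\to\infty}\int f(x,u_n)(u_n-u)\le 0 .
\]
Whether you then argue with $\|u_n^+-u^+\|^2$, as you do, or with $\|u_n^+\|^2-\|u^+\|^2$ plus weak lower semicontinuity, as the paper does, is immaterial. But this inequality is the entire content of the lemma, and your proposal does not establish it.

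On $B_R$, the $|u_n|^5$ term makes $f(x,u_n)$ merely bounded in $L^{6/5}(B_R)$. Pairing it with $u_n-u$ therefore needs $u_n-u\to0$ in $L^6(B_R)$, i.e.\ uniform integrability of $|u_n|^6$. Boundedness in $X$ and compactness of $X\hookrightarrow L^s_{loc}$, $s<6$, do not give this, so a Vitali argument does not apply. Outside $B_R$, on $\{|u_n|>r\}$, you need a small coefficient in front of $|t|^5$. Neither $(f_1)$ nor $(f_3)$ provides one; the latter only controls bounded $|t|$. The fallbacks (``combine $(f_2)$ with the $(PS)$ information'', ``invoke Liu--Wu'') are not arguments, so your final bound $\limsup\|u_n^+-u^+\|^2\le C\varepsilon$ is unsupported.

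The paper disposes of exactly this step by quoting the inequality from \cite[p.~29]{Bartsch2004}. Be aware, though, that the argument behind it (also used in \cite{MR3656292}) needs subcritical growth $|f(x,t)|\le C(|t|+|t|^{p-1})$ with $p<6$. Under that hypothesis it runs as follows. Given $\varepsilon>0$, pick $\delta$ small and $r$ large with $|f(x,t)|\le\varepsilon|t|$ for $|t|\le\delta$ and $|f(x,t)|\le\varepsilon|t|^5$ for $|t|\ge r$. Use $(f_3)$ on $\delta\le|t|\le r$ to get $|f(x,t)|\le\varepsilon(|t|+|t|^5)$ for all $t$ when $|x|\ge R$. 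Treat $B_R$ by the compact embedding $X\hookrightarrow L^p(B_R)$.

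Your suspicion about the critical tail is justified: under the bound in $(f_1)$ alone, both the inequality and the $(PS)$ condition itself can fail. Take $f(x,t)=h(x)|t|^4t$ with $h$ continuous, $0<h\le1$ and $h(x)\to0$ as $|x|\to\infty$; this satisfies $(f_1)$--$(f_3)$. Let $w_\epsilon$ be cut-off Aubin--Talenti bubbles concentrating at a point $x_0$, rescaled by $h(x_0)^{-1/4}$. These form a bounded $(PS)$ sequence with $w_\epsilon\rightharpoonup0$ but no strongly convergent subsequence, and $\int f(x,w_\epsilon)w_\epsilon\to h(x_0)^{-1/2}S^{3/2}>0$. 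Here $J'(w_\epsilon)\to0$ follows from the standard estimate for $-\Delta w_\epsilon-h(x_0)w_\epsilon^5$, together with $Vw_\epsilon\to0$ and $\phi_{w_\epsilon}w_\epsilon\to0$ in $X^*$, which holds because $\|w_\epsilon\|_2\to0$ and $\mathcal{K}\le 1/a$. So what is missing is not a finer splitting but a subcritical growth assumption; with it, the three-range splitting above completes your proof.
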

	
	\begin{proof}
		Suppose $\{u_n\}\subset X$ is a $(PS)$ sequence. By Lemma \ref{lem3.2}, then $\{u_n\}$ is bounded in $X$. Up to a subsequence, still denoted by $u_n$, we may assume \( u_n \rightharpoonup u \) in \( X \). We have
		\[
		\int \left( \nabla u_n \cdot \nabla  u+ V(x) u_n u \right)\rightarrow\int \left( |\nabla u|^2 + V(x) u^2 \right) = \|u^+\|^2 - \|u^-\|^2.
		\]
		Consequently
		\begin{align*}
			o(1) &= \langle J'(u_n), u_n - u \rangle \\
			&= \int \big[ \nabla u_n \cdot \nabla (u_n - u) + V(x) u_n (u_n - u) \big] \\
			&\quad + \int \phi_{u_n} u_n (u_n - u) - \int f(x, u_n) (u_n - u) \\
			&=\int\left(\nabla u_n \cdot \nabla  u_n+ V(x) u_n^2\right)-\int\left( \nabla u_n \cdot \nabla  u+ V(x) u_n u \right)\\
			&\quad + \int \phi_{u_n} u_n (u_n - u) - \int f(x, u_n) (u_n - u) \\
			&= \left( \|u_n^+\|^2 - \|u_n^-\|^2 \right) - \left( \|u^+\|^2 - \|u^-\|^2 \right) \\
			&\quad + \int \phi_{u_n} u_n (u_n - u) - \int f(x, u_n) (u_n - u) + o(1).
		\end{align*}
		Because $\dim X^-<+\infty$, we have $\|u_n^-\|\rightarrow \|u^-\|$. The above equality
		\begin{align*}
			\|u_n^+\|^2 - \|u^+\|^2 =\int f(x, u_n) (u_n - u)-\int \phi_{u_n} u_n (u_n - u)  + o(1).
		\end{align*}
		With assumption $(f_3)$, the following inequality was proved in \cite[P.29]{Bartsch2004}
		\begin{align*}\label{s2}
			\limsup_{n\rightarrow\infty}\int f(x, u_n) (u_n - u)\leq 0.
		\end{align*}
		Using Lemma \ref{lem3.4}, we obtain
		\begin{align*}
			\limsup_{n\rightarrow\infty}\left(  \|u_n^+\|^2 - \|u^+\|^2\right)&=\limsup_{n\rightarrow\infty}\left\{\int f(x, u_n) (u_n - u)-\int \phi_{u_n} u_n (u_n - u) \right\}\\
			&=\limsup_{n\rightarrow\infty}\int f(x, u_n) (u_n - u)-\liminf_{n\rightarrow\infty}\int\phi_{u_n} u_n (u_n - u)\\
			&\leq 0.
		\end{align*}
		Together this with the weakly lower semi-continuity of the norm functional $u\mapsto\|u\|$, we have
		\[\|u^+\|^2\leq\liminf_{n\rightarrow \infty}\|u_n^+\|^2\leq\limsup_{n\rightarrow \infty}\|u_n^+\|^2\leq\|u^+\|^2. \]
		Thus $\|u_n^+\|\rightarrow\| u^+\|$. Combining with $\|u_n^-\|\rightarrow\| u^-\|$, we get $\|u_n\|\rightarrow\| u\|$. Together with $u_n\rightharpoonup u$ in $X$, we have $u_n\rightarrow u$ in $X$. The proof of the lemma is complete.
	\end{proof}
	\subsection{Existence of nontrivial solutions}
	In the previous part, we have proved that $J$ satisfies the $(PS)$ condition. In this part, we will prove Theorems \ref{th1.1} and \ref{th1.2}. Here, we will recall some concepts and results from infinite-dimensional Morse theory, see Chang \cite{MR1196690} and Mawhin-Willem \cite[Chapter 8]{MR982267}. Let $E$ be a Banach space, $\Phi: E\rightarrow\mathbb{R}$ be a $C^1$-functional, $u$ is an isolated critical point and $\Phi(u)=c$. Then the $i$-th critical group of $\Phi$ at $u$ defined by
	\[C_i(\Phi, u):=H_i(\Phi_c,\Phi_c\backslash\{0\})\text{,}  \quad i\in\mathbb{N}=\{0,1,2,\cdots\}\text{,} \]
	where $\Phi_c:=\Phi^{-1}(-\infty, c]$ and $H_*$ stands for the singular homology with coefficients in $\mathbb{Z}$.
	
	If $\Phi$ satisfies the $(PS)$ condition and the critical values of $\Phi$ are bounded from below by $\alpha$, then following Bartsch-Li \cite{MR1420790}, we define the $i$-th critical group of $\Phi$ at infinity by
	\[C_i(\Phi,\infty):=H_i(E, \Phi_\alpha)\text{,} \quad i\in\mathbb{N}\text{.} \]
	
	\begin{prop}[{\cite[Proposition 3.6]{MR1420790}}]
		\label{pro2}
		If $\Phi\in C^1(E,\mathbb{R})$ satisfies the (PS) condition and $C_k(\Phi,0)\neq C_k(\Phi,\infty)$ for some $k\in\mathbb{N}$, then $\Phi$ has a nonzero critical point.
	\end{prop}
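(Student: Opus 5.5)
We argue by contradiction: assume that $0$ is the only critical point of $\Phi$, and show that then $C_k(\Phi,\infty)\cong C_k(\Phi,0)$ for every $k$. The tool is the second deformation lemma for $C^1$-functionals satisfying $(PS)$ (see Chang \cite{MR1196690}). It says that if $a<b\le+\infty$, $\Phi$ has no critical values in $(a,b)$, and $K_b=\Phi^{-1}(b)\cap K$ is finite (possibly $b=+\infty$ with $\Phi_{+\infty}=E$), then $\Phi_a$ is a strong deformation retract of $\Phi_b\setminus K_b$. Here $K$ denotes the critical set.

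\textbf{Step 1: setup.} Put $c=\Phi(0)$. Under our assumption $K=\{0\}$, so $c$ is the only critical value and any $\alpha<c$ bounds the critical values from below. Fix such an $\alpha$. Recall that $C_k(\Phi,\infty)=H_k(E,\Phi_\alpha)$ does not depend on the choice of $\alpha$, again by the deformation lemma.

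\textbf{Step 2: the top part.} Since there are no critical values in $(c,+\infty)$, the deformation lemma applied on $(c,+\infty)$ shows that $\Phi_c$ is a strong deformation retract of $E$. The flow can be constructed to fix $\Phi_c$, which handles the possible lack of completeness at the top end; the $(PS)$ condition guarantees that $\|\Phi'\|$ is bounded away from $0$ on $\Phi^{-1}[c+\varepsilon,\infty)$. Because the retraction fixes $\Phi_\alpha\subset\Phi_c$, homotopy invariance of relative homology gives
\[
H_k(E,\Phi_\alpha)\cong H_k(\Phi_c,\Phi_\alpha).
\]

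\textbf{Step 3: the bottom part.} There are no critical values in $[\alpha,c)$, and $K_c=\{0\}$. Applying the deformation lemma on $(\alpha,c]$, $\Phi_\alpha$ is a strong deformation retract of $\Phi_c\setminus\{0\}$. Hence the inclusion $(\Phi_c,\Phi_\alpha)\hookrightarrow(\Phi_c,\Phi_c\setminus\{0\})$ induces isomorphisms in homology. One can see this from the long exact sequence of the triple $(\Phi_c,\Phi_c\setminus\{0\},\Phi_\alpha)$, since $H_*(\Phi_c\setminus\{0\},\Phi_\alpha)=0$. Therefore
\[
H_k(\Phi_c,\Phi_\alpha)\cong H_k(\Phi_c,\Phi_c\setminus\{0\})=C_k(\Phi,0).
\]

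\textbf{Step 4: conclusion.} Combining Steps 2 and 3 gives $C_k(\Phi,\infty)\cong C_k(\Phi,0)$ for all $k\in\mathbb{N}$, contradicting the hypothesis. Hence $\Phi$ has a critical point different from $0$.

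The main obstacle is Step 3: the deformation has to end exactly at the critical level $c$ while avoiding the critical point $0$. One needs the version of the deformation lemma that allows an isolated critical set at the upper level. The flow is the negative pseudo-gradient flow, stopped on reaching $\Phi_\alpha$. $(PS)$ ensures that a point of $\Phi_c\setminus\{0\}$ either reaches $\Phi_\alpha$ in finite time or its trajectory would have to accumulate at a critical point in $\Phi^{-1}[\alpha,c]$, which can only be $0$. Ruling out this accumulation for points starting outside $0$, and checking that the resulting retraction is continuous, is the delicate point. I would take these facts from Chang's treatment rather than reprove them.
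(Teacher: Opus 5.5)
Your argument is correct and is the standard proof of this fact. The paper gives no proof of its own and only quotes Bartsch--Li. Under $K=\{0\}$, the chain $H_k(E,\Phi_\alpha)\cong H_k(\Phi_c,\Phi_\alpha)\cong H_k(\Phi_c,\Phi_c\setminus\{0\})=C_k(\Phi,0)$ is exactly how $C_*(\Phi,\infty)\cong C_*(\Phi,0)$ is obtained, with both isomorphisms coming from the second deformation theorem and the exact sequence of a triple.

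Two things should be corrected, although neither breaks the proof.

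\textbf{The deformation theorem is quoted with its hypothesis at the wrong end.} In Chang's version \cite{MR1196690} the condition is on the \emph{lower} level: $\Phi$ has no critical values in $(a,b)$, and the critical points in $\Phi^{-1}(a)$ are isolated (Chang only needs the components of $K_a$ to be points). The conclusion is then that $\Phi_a$ is a strong deformation retract of $\Phi_b\setminus K_b$, with no condition at all on $K_b$, because critical points on the top level are simply deleted. Without a condition on $K_a$ the flow argument breaks down, since trajectories descending to level $a$ need not have a limit. In your applications both ends are harmless ($K_c=\{0\}$, $K_\alpha=K_{+\infty}=\emptyset$), so Steps 2 and 3 remain valid.

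\textbf{The delicate step is Step 2, not Step 3.}

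In Step 3, use the normalized pseudo-gradient flow, along which $\frac{d}{dt}\Phi(\sigma)=-1$. A trajectory starting at $x\in\Phi_c\setminus\{0\}$ lies in $\Phi^{-1}[\alpha,c-t_0]$ for all $t\ge t_0>0$, and there $\|\Phi'\|$ is bounded below. So it cannot approach $0$, and it reaches $\Phi_\alpha$ at time $\Phi(x)-\alpha$; there is nothing to rule out.

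In Step 2, trajectories descending from above do end on level $c$ and may converge to $0$. Isolatedness of $0$ is what yields convergence of these trajectories and continuity of the endpoint map: a trajectory that enters $B_r(0)$ and later leaves $B_{2r}(0)$ loses a fixed positive amount of $\Phi$.

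Your justification in Step 2 is also inaccurate. $(PS)$ only controls sequences with bounded values, so it gives $\inf\|\Phi'\|>0$ on $\Phi^{-1}[c+\varepsilon,M]$ for each finite $M$, not on $\Phi^{-1}[c+\varepsilon,\infty)$. For example, $\Phi(x)=\log(1+x^2)$ on $\mathbb{R}$ satisfies $(PS)$, yet $\Phi'(x)\to 0$ as $x\to\infty$. The bounded bands are all the flow needs.
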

	
	\begin{prop}[{\cite[Theorem 2.1]{MR1110119}}] \label{pro3}
		Suppose $\Phi\in C^1(E,\mathbb{R})$ has a local linking at $0$ with respect to the decomposition $E=Y\oplus Z$, i.e., for some $\rho>0$,
		\begin{align*}
			&\Phi(u)\leq 0~~\text{\quad for}~~u\in Y\cap B_\rho\text{,} \\
			&\Phi(u)>0~~\text{\quad for}~~u\in (Z\backslash\{0\})\cap B_\rho,
		\end{align*}
		where $B_\rho=\{u\in E :\|u\|\leq \rho\}$. If $l=\dim Y<\infty$, then $C_l(\Phi, 0)\neq 0$.
	\end{prop}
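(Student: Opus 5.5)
We argue by factoring the identity of a group isomorphic to $\mathbb{Z}$ through $C_l(\Phi,0)$. As in the definition of critical groups, $0$ is assumed to be an isolated critical point, and $\Phi(0)=0$ follows from the hypotheses. By excision, for any closed neighbourhood $U$ of $0$ containing no other critical point,
\[
C_l(\Phi,0)\cong H_l(\Phi_0\cap U,\ \Phi_0\cap U\setminus\{0\}).
\]
Replace the norm by the equivalent norm $\|y+z\|':=\max\{\|y\|,\|z\|\}$ for $y\in Y$, $z\in Z$. Then shrink $\rho$ so that the product neighbourhood $U:=\{y+z:\ \|y\|\le\rho,\ \|z\|\le\rho\}$ lies in the original ball on which the local linking inequalities hold. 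In particular, $U$ contains no critical point other than $0$.

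We then consider the chain of inclusions of pairs
\[
(Y\cap U,\ Y\cap U\setminus\{0\})\ \xrightarrow{\ i\ }\ (\Phi_0\cap U,\ \Phi_0\cap U\setminus\{0\})\ \xrightarrow{\ j\ }\ (U,\ U\setminus Z).
\]
The map $i$ is well defined because $\Phi\le 0$ on $Y\cap U$. The map $j$ is well defined because $\Phi>0$ on $(Z\setminus\{0\})\cap U$, so every $u\in\Phi_0\cap U$ with $u\neq0$ lies outside $Z$.

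Next we show that $j\circ i$ is a homotopy equivalence of pairs. Consider the homotopy $h_t(y+z)=y+(1-t)z$ for $t\in[0,1]$. It maps $U$ into $U$ because the norm is a product norm. It preserves $U\setminus Z$, since $y+z\notin Z$ means $y\neq0$, and hence $y+(1-t)z\notin Z$. Moreover, it fixes $Y\cap U$. Therefore $h_1=P_Y$ is a deformation retraction of the pair $(U,U\setminus Z)$ onto $(Y\cap U,\ Y\cap U\setminus\{0\})$. Consequently $(j\circ i)_*$ is an isomorphism on $H_l$.

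Finally, $Y\cap U$ is a closed $l$-dimensional ball in the finite-dimensional space $Y$. Hence
\[
H_l(Y\cap U,\ Y\cap U\setminus\{0\})\cong H_l(D^l,S^{l-1})\cong\mathbb{Z}.
\]
Since $(j\circ i)_*=j_*\circ i_*$ is an isomorphism from $\mathbb{Z}$ onto $\mathbb{Z}$ that factors through $C_l(\Phi,0)$, the map $i_*$ is injective. Therefore $C_l(\Phi,0)\neq0$. The case $l=0$ is the same argument with $Y=\{0\}$, where $H_0(\{0\},\emptyset)=\mathbb{Z}$.

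The only delicate point is the choice of neighbourhood. The homotopy $h_t$ must keep $U$ invariant, which is why we use a product-shaped neighbourhood rather than a round ball. This is harmless because critical groups do not depend on the choice of neighbourhood, by excision. If one prefers to keep the original norm, one can instead use the set $\{\|P_Yu\|\le\rho_1,\ \|P_Zu\|\le\rho_2\}$ with $\rho_1,\rho_2$ small, which also works since the projections $P_Y$ and $P_Z$ are bounded.
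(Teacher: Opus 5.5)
Your argument is correct, and it is the standard proof of Liu's theorem. The paper gives no proof and only cites \cite{MR1110119}, where the isomorphism $H_l(Y\cap U, Y\cap U\setminus\{0\})\cong\mathbb{Z}\to H_l(U,U\setminus Z)$ is likewise factored through $C_l(\Phi,0)$ via the local-linking inclusions; your product-norm neighbourhood is simply a convenient way to keep the homotopy $y+(1-t)z$ inside $U$ when the splitting is not orthogonal. One minor caveat: $\Phi(0)=0$ follows from the hypotheses only when $Z\neq\{0\}$, by continuity along $Z\setminus\{0\}$, so in the degenerate case $E=Y$ it must be assumed (it holds in the paper's application, where $Z=X^+$).
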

	
	To prove Theorems \ref{th1.1} and \ref{th1.2}, given that the functional $J$ is assumed to have only finitely many critical points and satisfies the $(PS)$ condition, its critical groups at infinity are well-defined. To establish the critical group $C_*(J,\infty)$ of $J$ at infinity, we need the following lemma.
	
	\begin{lem}\label{lem4}
		There exists \( A > 0 \) such that, if \( J(u) \le -A \), then
		\[
		\left.\frac{\mathrm{d}}{\mathrm{d}t} \right|_{t=1}J(tu) < 0.
		\]
	\end{lem}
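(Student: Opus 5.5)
Write $\frac{d}{dt}\big|_{t=1}J(tu)=\langle J'(u),u\rangle$ and compare it with $4J(u)$. Since $\phi_{tu}=t^2\phi_u$, the nonlocal term is homogeneous of degree four: $\mathcal{M}(tu)=t^4\mathcal{M}(u)$. This is exactly why the comparison with $4J$ is the natural one. A direct computation gives
\begin{align*}
\langle J'(u),u\rangle - 4J(u) &= \|u^+\|^2-\|u^-\|^2+\int\phi_u u^2-\int f(x,u)u\\
&\quad -2\left(\|u^+\|^2-\|u^-\|^2\right)-\int\phi_u u^2+4\int F(x,u)\\
&= -\left(\|u^+\|^2-\|u^-\|^2\right)-\int\big(f(x,u)u-4F(x,u)\big).
\end{align*}
By $(f_2)$ the last integral is nonnegative, so
\[
\langle J'(u),u\rangle\le 4J(u)-\|u^+\|^2+\|u^-\|^2 .
\]

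The plan is therefore to control $\|u^-\|$ in terms of $J(u)$ when $J(u)\le -A$. The only obstruction is the finite-dimensional negative term $\|u^-\|^2$, which the inequality above does not absorb. I would rather avoid estimating $\|u^-\|$ directly and instead use a second combination,
\[
\langle J'(u),u\rangle - 2J(u)=\frac12\int\phi_uu^2-\int\big(f(x,u)u-2F(x,u)\big).
\]
This does not obviously help either. The cleaner route is an argument by contradiction.

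Suppose there are $u_n$ with $J(u_n)\to-\infty$ and $\langle J'(u_n),u_n\rangle\ge 0$. From the first identity,
\[
0\le 4J(u_n)-\|u_n^+\|^2+\|u_n^-\|^2 ,
\]
which forces $\|u_n^-\|^2\ge -4J(u_n)\to\infty$, and hence $\|u_n\|\to\infty$. Set $v_n=u_n/\|u_n\|$ with $v_n\rightharpoonup v$. Two cases arise, as in the proof of Lemma \ref{lem3.2}.

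If $v\ne0$, divide $J(u_n)\le 0$ by $\|u_n\|^4$. The quadratic part is $O(\|u_n\|^{-2})$ and the $\phi$-term is bounded by $a_0/4$ (Lemma \ref{lem3.1}). By $(f_2)$ and Fatou's lemma, $\int F(x,u_n)/\|u_n\|^4\to+\infty$. This gives $J(u_n)/\|u_n\|^4\to-\infty$, which by itself is no contradiction. So I would use the hypothesis $\langle J'(u_n),u_n\rangle\ge0$ instead. It gives
\[
\int f(x,u_n)u_n\le \|u_n^+\|^2-\|u_n^-\|^2+\int\phi_{u_n}u_n^2\le \|u_n\|^2+a_0\|u_n\|^4 .
\]
Since $f(x,u_n)u_n\ge 4F(x,u_n)$, dividing by $\|u_n\|^4$ makes the left-hand side tend to $+\infty$, contradicting the bound $\le 1+a_0$ on the right. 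This is exactly the computation in Lemma \ref{lem3.2}.

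If $v=0$, then $v_n^-\to0$ because $\dim X^-<\infty$, so $\|u_n^-\|^2=o(\|u_n\|^2)$ while $\|u_n^+\|^2=(1-o(1))\|u_n\|^2$. The first inequality then gives
\[
0\le 4J(u_n)-\tfrac12\|u_n\|^2<0
\]
for large $n$, a contradiction. Hence some $A>0$ works.

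The main obstacle is the unbounded negative-space term. The point is that $\langle J'(u),u\rangle\ge0$ together with $J(u)\to-\infty$ forces $\|u\|\to\infty$, after which the blow-up dichotomy of Lemma \ref{lem3.2} applies, with the $\phi$-term controlled by Lemma \ref{lem3.1} and its exact quartic homogeneity.
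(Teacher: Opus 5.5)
Your proof is correct and follows essentially the same route as the paper. Both use the identity $4J(u)-\langle J'(u),u\rangle=\|u^+\|^2-\|u^-\|^2+\int\big(f(x,u)u-4F(x,u)\big)$ and a contradiction sequence normalized by $\|u_n\|$, then split on the weak limit (the paper splits on $v^-$, you split on $v$) and handle the superlinear case by Fatou and Lemma \ref{lem3.1} as in Lemma \ref{lem3.2}; your explicit step $\|u_n^-\|^2\ge -4J(u_n)\to\infty$, giving $\|u_n\|\to\infty$ as the Fatou argument requires, is left implicit in the paper.
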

	
	\begin{proof}
		We argue by contradiction and assume that there exists a sequence \(\{u_{n}\}\subset X\) such that \(J(u_{n})\leq -n\) but
		\begin{align}\label{s4}
			\langle J^{\prime}(u_{n}),u_{n}\rangle = \left.\frac{\mathrm{d}}{\mathrm{d}t} \right|_{t=1}J(tu_{n}) \geq 0.
		\end{align}
		Thus, by $(f_2)$, we have
		\begin{align}\label{s3}
			\|u_{n}^{+}\|^{2} - \|u_{n}^{-}\|^{2}
			&\leq \left( \|u_{n}^{+}\|^{2} - \|u_{n}^{-}\|^{2} \right) + \int [f(x,u_{n})u_{n} - 4F(x,u_{n})] \notag\\
			&= 4J(u_{n}) - \langle J^{\prime}(u_{n}),u_{n}\rangle \leq -4n.
		\end{align}
		Let \(v_{n} = \|u_{n}\|^{-1}u_{n}\) and \(v_{n}^{\pm}\) be the orthogonal projection of \(v_{n}\) on \(X^{\pm}\). Then \(v_{n}^{-} \to v^{-}\) for some \(v^{-} \in X^{-},\) because \(\dim X^{-} < \infty.\)
		
		If \(v^{-} \neq 0\), then there exists some \(v \in X \backslash \{0\}\) such that \(v_{n} \rightharpoonup v\) in \(X\). Similar to \eqref{f3.2}, we have
		\[
		\begin{aligned}
			\int \frac{f(x,u_{n})u_{n}}{\|u_{n}\|^{4}}
			&\geq 4 \int \frac{F(x,u_{n})}{\|u_{n}\|^{4}} \\
			&\geq 4 \int_{v \neq 0} \frac{F(x,u_{n})}{u_{n}^{4}} v_{n}^{4} \to +\infty.
		\end{aligned}
		\]
		By \eqref{s4}, we get
		\[
		\begin{aligned}
			0 &\leq \frac{\langle J'(u_{n}), u_{n}\rangle}{\|u_{n}\|^{4}} \\
			&= \frac{\|u_{n}^{+}\|^{2} - \|u_{n}^{-}\|^{2}}{\|u_{n}\|^{4}} + \frac{1}{\|u_{n}\|^{4}} \int \phi_{u_{n}} u_{n}^{2} \,  - \int \frac{f(x,u_{n})u_{n}}{\|u_{n}\|^{4}} \,  \\
			&\leq 1 + a_{0} - \int \frac{f(x,u_{n})u_{n}}{\|u_{n}\|^{4}} \, \to -\infty,
		\end{aligned}
		\]
		a contradiction.
		
		Therefore \(v^{-} = 0\). From
		\[
		\|v_{n}^{+}\|^{2} + \|v_{n}^{-}\|^{2} = 1,
		\]
		we have \(\|v_{n}^{+}\| \to 1.\) Consequently, for \(n\) large enough,
		\[
		\|u_{n}^{+}\| = \|u_{n}\| \|v_{n}^{+}\| \geq \|u_{n}\| \|v_{n}^{-}\| = \|u_{n}^{-}\|,
		\]
		violating \eqref{s3}. Hence the proof of the lemma is complete.
	\end{proof}
	
	\begin{lem}\label{group1}
		$C_l(J,\infty)=0$ for all $l=0,1,2,\cdots$.
	\end{lem}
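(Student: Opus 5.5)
Following the standard scheme of Bartsch--Li and Liu--Wu, I will show that for $A>0$ large the sublevel set $J_{-A}=J^{-1}(-\infty,-A]$ is a deformation retract of $X\setminus\{0\}$. Since $X$ is an infinite-dimensional Hilbert space, the unit sphere $S=\{u\in X:\|u\|=1\}$ is contractible, so $X\setminus\{0\}$ is contractible. We may choose $-A$ below all critical values, because we assume only finitely many critical points. By definition of $C_*(J,\infty)$ and the exact sequence of the pair, we then get $C_l(J,\infty)=H_l(X,J_{-A})\cong H_l(X,X\setminus\{0\})=0$ for all $l$.

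The first step is to study the fibres $t\mapsto J(tu)$ for $u\in S$. Write
\begin{equation*}
J(tu)=\frac{t^2}{2}\left(\|u^+\|^2-\|u^-\|^2\right)+\frac{t^4}{4}\int\phi_u u^2-\int F(x,tu).
\end{equation*}
I will show that $J(tu)\to-\infty$ as $t\to\infty$. By $(f_2)$, on the positive-measure set $\{u\neq0\}$ we have $F(x,tu)/t^4\to+\infty$. Fatou's lemma then gives $\int F(x,tu)/t^4\to+\infty$, while the other terms grow at most like $t^4(\tfrac12+a_0)$ by Lemma \ref{lem3.1}. Hence for each $u\in S$ there is some $t$ with $J(tu)\le -A$.

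Next, Lemma \ref{lem4} says that at any point $tu$ with $J(tu)\le -A$ we have $\frac{d}{ds}J(su)|_{s=t}<0$. Therefore, once the fibre enters $J_{-A}$ it stays there and crosses the level $-A$ exactly once. This defines a unique $T(u)>0$ with $J(T(u)u)=-A$, and $J(tu)<-A$ for all $t>T(u)$.

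The implicit function theorem, applied using the strict negativity of the derivative from Lemma \ref{lem4}, gives continuity of $T$ on $S$. There is a subtlety when the fibre first reaches $-A$ only tangentially from above, but Lemma \ref{lem4} rules this out: at every point of the level set the derivative is strictly negative, so the crossing is transversal.

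Finally, I will write down the deformation. Extend $T$ to $X\setminus\{0\}$ by $0$-homogeneity, $T(u):=T(u/\|u\|)$, and set
\begin{equation*}
\eta(s,u)=\begin{cases}(1-s)u+s\,T(u)\frac{u}{\|u\|}, & J(u)\ge -A,\\ u, & J(u)\le -A.\end{cases}
\end{equation*}
The two cases agree on $J(u)=-A$, because there $\|u\|=T(u)$. This map is continuous, keeps $J_{-A}$ fixed, and never passes through $0$. At $s=1$ it maps $X\setminus\{0\}$ into $J_{-A}$. Note also that $J_{-A}\subset X\setminus\{0\}$ since $J(0)=0$. Hence $J_{-A}$ is a strong deformation retract of $X\setminus\{0\}$. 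The main obstacle is the continuity of $T$, together with checking that $\{J\ge -A\}$ along each ray is exactly $\{t\le T(u)\}$, and Lemma \ref{lem4} is precisely what handles both.
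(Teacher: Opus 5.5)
Your proposal is correct and follows essentially the same route as the paper. Both arguments take the unique transversal crossing time $T$ on each ray from Lemma~\ref{lem4}, obtain continuity of $T$ by the implicit function theorem, and deform radially onto $J_{-A}$. The only differences are cosmetic: you retract $X\setminus\{0\}$ (using $J(0)=0$) and check the ray structure $\{J\ge -A\}=\{\|u\|\le T(u)\}$ explicitly, whereas the paper retracts $X\setminus B$ after normalizing $-A<\inf_{\|u\|\le 2}J$. Either way, the conclusion rests on the contractibility of the unit sphere in infinite dimensions.
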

	\begin{proof}
		Let \( B = \{ v \in X \mid \|v\| \leq 1 \} \), \( S = \partial B \) be the unit sphere in \( X \), and \( A > 0 \) be the number given in Lemma \ref{lem4}. Without loss of generality, we may assume that
		\[
		-A < \inf_{\|u\| \leq 2} J(u).
		\]
		By \eqref{eq:5}, it is clear that for any \( v \in S \),
		\[
		\begin{aligned}
			J(sv)
			&= \frac{s^2}{2} \left( \|v^+\|^2 - \|v^-\|^2 \right) + \frac{s^4}{4} \int \phi_v v^2 - \int F(x, sv) \\
			&= s^4 \left\{ \frac{\|v^+\|^2 - \|v^-\|^2}{2s^2} + \frac{1}{4} \int \phi_v v^2 - \int \frac{F(x, sv)}{s^4} \right\} \to -\infty,
		\end{aligned}
		\]
		as \( s \to +\infty \). So there exists \( s_v > 0 \) such that \( J(s_v v) = -A \). Let \( u = s_v v \). Then, by a direct computation together with Lemma \ref{lem4}, we obtain
		\[
		\left. \frac{d}{ds} \right|_{s=s_v} J(sv) = \frac{1}{s_v} \left. \frac{d}{dt} \right|_{t=1} J(tu) < 0.
		\]
		By the implicit function theorem, then \( T : v \mapsto s_v \) is a continuous function on \( S \). Using this map we can construct a deformation from $\eta :X\backslash B \to J_{-A}$
		\[
		\eta(u) =
		\begin{cases}
			u, & \text{if } J(u) \leq -A, \\
			T\left( \frac{u}{\|u\|} \right) \frac{u}{\|u\|}, & \text{if } J(u) > -A.
		\end{cases}
		\]
		Thus, we deduce via the homotopy invariance of singular homology
		\[C_l(J,\infty)=H_l(X, J_{-A})\cong H_l(X, X\backslash B)=0\text{,\quad}\text{for}~~ l\in\mathbb{N}\text{.}\]
		The proof of the lemma is complete.
	\end{proof}
	\noindent
	\textbf{Proof of Theorem \ref{th1.1}}
	We have proved that $J$ satisfies the $(PS)$ condition thanks to Lemma \ref{lem3.5}. Next, we claim $J$ has a local linking at $0$ with respect to the decomposition $X=X^+\oplus X^-$. In fact, by $(f_1)$, for any $\varepsilon>0$, there is $C_\varepsilon>0$ such that
	\begin{align}\label{f3.01}
		|f(x,u)|\leq \varepsilon |u|+C_\varepsilon |u|^5.
	\end{align}
	Then, by  Sobolev embedding inequality, integrating yields
	\[
	\int F(x,u)\le \frac{\varepsilon}{2}\|u\|_2^2+\frac{C_\varepsilon}{6} \|u\|_6^6 \le \frac{\varepsilon}{2}\|u\|^2+\frac{C_\varepsilon}{6} \|u\|^6.
	\]
	Together with \eqref{f3.11}, we obtain
	\[J(u)=\frac{1}{2}\left(\|u^+\|^2-\|u^-\|^2\right)+o(\|u\|^2)\text{\quad as\quad}\|u\|\rightarrow 0.\]
	There exists $c>0$ and if $u\in (X^+\setminus\{0\})\cap B_c $, we have
	\[J(u)=\frac{1}{2}\left(\|u^+\|^2-\|u^-\|^2\right)+o(\|u\|^2)\ge\frac{1}{2}\|u^+\|^2+o(\|u\|^2)>0.\]
	Similarly, if $u \in X^-\cap B_c$, then we obtain
	\[J(u)=\frac{1}{2}\left(\|u^+\|^2-\|u^-\|^2\right)+o(\|u\|^2)\le-\frac{1}{2}\|u^-\|^2+o(\|u\|^2)\le 0.\]
	Hence $J$ has a local linking at $0$ with respect to the decomposition $X=X^+\oplus X^-$. Together with Proposition \ref{pro3}, we obtain
	\[C_l(J,0)\neq 0\]
	for $l=\dim X^-$. From Lemma \ref{group1}, we have
	\[C_l(J, \infty)\neq C_l(J, 0)\text{.}\]
	This concludes, by Proposition \ref{pro2}, that $J$ has a nontrivial critical point $u$.  
	Thus, $u$ is a nontrivial solution of \eqref{eq:4}.\qed
	
	We prove Theorem \ref{th1.2} by employing a similar approach to that used in the literature \cite{Jiang2023}, we give the details below.
	\begin{lem}\label{lem5.1}
		If $f:\mathbb{R}^3\times\mathbb{R}\to\mathbb{R}$ is continuous and satisfies $(f_4)$. Then the functional $K:X\to\mathbb{R}$
		\[
		K(u)=\int F(x,u)
		\]
		is well defined and of class $C^1$ with
		\[
		\langle K'(u),\phi\rangle=\int f(x,u)\phi,\quad \forall\,\phi\in X.
		\]
		Therefore, $K'$ is compact.
	\end{lem}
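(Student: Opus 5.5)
We prove Lemma \ref{lem5.1} in three steps: well-definedness, differentiability, and compactness of $K'$.

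\emph{Well-definedness.} Integrating $(f_4)$ gives
\[
|F(x,t)|\le \tfrac12 a(x)t^2+\tfrac1s b(x)|t|^s.
\]
Since $a\in L^\infty\cap L^p$ with $p\ge 3/2$, interpolation gives $a\in L^{3/2}\cap L^\infty$. H\"older then yields
\[
\int a u^2\le \|a\|_{3/2}\|u\|_6^2 \quad\text{or}\quad \|a\|_\infty\|u\|_2^2 .
\]
Since $b\in L^\infty\cap L^q$ with $q\ge 6$, we have $b\in L^r$ for every $r\in[q,\infty]$. For $3\le s\le 5$ we need $\int b|u|^s<\infty$. Take exponents with $\frac1r+\frac{s}{t}=1$, $t\in[2,6]$. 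With $b\in L^\infty$ and $u\in L^s$, $s\in[3,5]\subset[2,6]$, this is immediate. Hence $K$ is finite on $X$.

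\emph{$C^1$ with the stated derivative.} This is standard. The mean value theorem together with dominated convergence shows $K$ is Gateaux differentiable with $\langle K'(u),\phi\rangle=\int f(x,u)\phi$. Continuity of $u\mapsto f(x,u)$ from $X$ into $X^*$ follows from continuity of the Nemytskii operators. Concretely, $u\mapsto a(x)\,\cdot$ is a bounded linear map $L^2\to L^2$. The map $u\mapsto f(x,u)$ splits into a piece from $L^2\cap L^6$ into $L^{2}+L^{6/5}$ and a piece from $L^s$ into $L^{s/(s-1)}$, using $(f_4)$ and Krasnosel'skii's theorem. This gives $C^1$.

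\emph{Compactness of $K'$.} This is the main step, and the plan is to use the integrability of $a$ and $b$ to kill mass at infinity. Let $u_n\rightharpoonup u$ in $X$, so $\|u_n\|\le C$. We show $K'(u_n)\to K'(u)$ in $X^*$. Fix $\varepsilon>0$ and split $\mathbb{R}^3=B_R\cup B_R^c$.
\begin{itemize}
\item On $B_R^c$, for $\|\phi\|\le1$:
\[
\int_{B_R^c}|f(x,u_n)\phi|\le \|a\|_{L^{3/2}(B_R^c)}\|u_n\|_6\|\phi\|_6+\|b\|_{L^{q}(B_R^c)}\|u_n\|_{t}^{s-1}\|\phi\|_{t},
\]
where $t$ is chosen so that $\frac1q+\frac{s}{t}=1$, i.e.\ $t=\frac{sq}{q-1}$. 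Since $q\ge6$ and $s\le5$ we have $t\le 6$, and since $s\ge3$ we have $t>2$. If $t$ exceeds $6$ in some edge case, enlarge $q$ using $b\in L^\infty$. The same bound holds for $u$. Because $a\in L^{3/2}$ and $b\in L^q$, choosing $R$ large makes the tails smaller than $\varepsilon$, uniformly in $n$ and in $\|\phi\|\le1$.
\item On $B_R$, the compact embedding $X\hookrightarrow L^r(B_R)$ for $r<6$ gives $u_n\to u$ in $L^2(B_R)\cap L^{t}(B_R)$ whenever $t<6$. The Nemytskii operator is continuous $L^{t}(B_R)\to L^{t/(s-1)}(B_R)$ by the growth bound $|f|\le C(|t|+|t|^{s-1})$ with bounded $a,b$. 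Hence $\sup_{\|\phi\|\le1}\int_{B_R}|f(x,u_n)-f(x,u)||\phi|\to0$.
\end{itemize}

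The delicate case is $s=5$ with $q=6$, where $t=6$ and local compactness fails. There I would first use $b\in L^\infty\cap L^6$ to pick a slightly larger exponent $q'>6$, so that $t<6$. This forces me to trade a bit of integrability, and it is the step I expect to need the most care. Combining the tail and local estimates gives $\|K'(u_n)-K'(u)\|_{X^*}\le 2\varepsilon+o(1)$, so $K'$ is compact.
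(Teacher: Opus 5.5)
Your strategy matches the paper's: on $B_R^c$ the tails are small because $a$ and $b$ are integrable, and on $B_R$ Rellich compactness applies. Your treatment of the $b$-term, with $t=\frac{sq}{q-1}\in(2,6]$, is the paper's exactly.

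\emph{The $a$-term rests on a false step.} From $a\in L^\infty\cap L^p$ with $p\ge 3/2$, interpolation gives $a\in L^r$ only for $r\in[p,\infty]$. It does not give $a\in L^{3/2}$ when $p>3/2$. For example, $a(x)=(1+|x|)^{-2}$ lies in $L^\infty\cap L^p$ for every $p>3/2$ but not in $L^{3/2}$. So $\|a\|_{L^{3/2}(B_R^c)}$ may be infinite, and your tail bound $\|a\|_{L^{3/2}(B_R^c)}\|u_n\|_6\|\phi\|_6$ gives nothing.

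In the well-definedness step this is harmless, because your alternative bound $\|a\|_\infty\|u\|_2^2$ suffices. In the compactness step, however, $L^\infty$ alone cannot make tails small, so the false claim carries the argument. The correct estimate, which is the one the paper uses, applies H\"older with exponents $p,\frac{2p}{p-1},\frac{2p}{p-1}$:
\[
\int_{B_R^c}|a|\,|u_n|\,|\phi|\le \|a\|_{L^p(B_R^c)}\,\|u_n\|_{\frac{2p}{p-1}}\,\|\phi\|_{\frac{2p}{p-1}}.
\]
Here $\frac{2p}{p-1}\in(2,6]$ precisely because $p\ge 3/2$, so both factors are controlled by $\|\cdot\|$ via Sobolev. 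This is exactly what the hypothesis $p\ge 3/2$ is for.

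\emph{The case $s=5$, $q=6$ is not actually delicate.} It arises only because on $B_R$ you pair $\phi\in L^t$ with $f(x,u_n)-f(x,u)\in L^{t/(s-1)}$. On $B_R$ you need only $b\in L^\infty$, which gives $|f(x,\tau)|\le C(1+|\tau|^{s-1})$ there. Krasnosel'skii's theorem then yields $f(x,u_n)\to f(x,u)$ in $L^{6/5}(B_R)$ as soon as $u_n\to u$ in $L^{6(s-1)/5}(B_R)$, and $\frac{6(s-1)}{5}\le\frac{24}{5}<6$. Pairing with $\|\phi\|_{L^6}\le C\|\phi\|$ gives the required uniform convergence on $B_R$. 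Your workaround of raising $q$ is a correct use of interpolation, since it moves toward $L^\infty$, so it also works; it is just unnecessary.
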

	
	\begin{proof}
		From \eqref{eq:7} we have
		\[
		|f(x,t)|\leq \|a\|_\infty|t|+\|b\|_\infty|t|^{s-1},
		\]
		then
		\[
		|F(x,t)|\leq C_1\|a\|_\infty|t|^2+C_2\|b\|_\infty|t|^s.
		\]
		Since $s \in [3,5]$, it is well known that $K$ is well defined and of class $C^1$.
		
		To prove that $K':X\to X^*$ is compact, let $u_n\rightharpoonup u$ in $X$. Because $a\in L^p(\mathbb{R}^3),b\in L^q(\mathbb{R}^3)$, for any $\varepsilon>0$, there exists $R>0$ such that
		\begin{equation}\label{5.1}
			\int_{B_R^c}|a|^p<\varepsilon^p,\quad \int_{B_R^c}|b|^q<\varepsilon^q,
		\end{equation}
		where $B_R$ represents the ball centered at the origin with radius $R>0$ in $\mathbb{R}^3$, and $B_R^c=\mathbb{R}^3\setminus B_R$ is the complement set. For $\phi\in X$, let $\|\phi\|=1$, by invoking \eqref{eq:7}, \eqref{5.1} and H\"older inequality, we have
		\begin{equation*}
			\begin{aligned}
				\int_{B_R^c}|f(x,u)|\,|\phi|
				&\leq \int_{B_R^c}|a||u||\phi|+\int_{B_R^c}|b||u|^{s-1}|\phi| \\
				&\leq C\|a\|_{p}\|u\|_{2p/(p-1)}\|\phi\|_{2p/(p-1)}+C\|b\|_q\|u\|_{sq/(q-1)}^{s-1}\|\phi\|_{sq/(q-1)} \\
				&\leq \varepsilon \|u\|_{2p/(p-1)}\|\phi\|_{2p/(p-1)}+\varepsilon \|u\|_{sq/(q-1)}^{s-1}\|\phi\|_{sq/(q-1)} \\
				&\leq \varepsilon C_1\|u\|+\varepsilon C_2\|u\|^{s-1}\\
				&\leq M\varepsilon,
			\end{aligned}
		\end{equation*}
		where $\|\,\cdot\,\|_\gamma$ is the standard $L^\gamma(B_R^c)$ norm, $M$ is a constant that depends on $\sup\limits_n\|u_n\|$ but not on $\phi$.
		
		Similarly, 
		\[
		\int_{B_R^c}|f(x,u_n)|\,|\phi|\leq M\varepsilon. 
		\]
		Therefore
		\begin{equation}\label{5.3}
			\int_{B_R^c}\big|f(x,u_n)-f(x,u)\big|\,|\phi|
			\leq \int_{B_R^c}|f(x,u_n)|\,|\phi|+\int_{B_R^c}|f(x,u)|\,|\phi|\leq 2M\varepsilon.
		\end{equation}
		By the compactness of the embedding $X\hookrightarrow L^r_{loc}(\mathbb{R}^3)$, $r \in[2,6)$, we have
		\begin{equation}\label{5.4}
			\sup_{ \|\phi\|=1}\int_{B_R}\big|f(x,u_n)-f(x,u)\big|\,|\phi|\to 0.
		\end{equation}
		It follows from \eqref{5.3} and \eqref{5.4} that
		\[
		\begin{aligned}
			\varlimsup_{n\to\infty}\|K'(u_n)-K'(u)\|
			&= \varlimsup_{n\to\infty}\sup_{\|\phi\|=1}\left|\int\big(f(x,u_n)-f(x,u)\big)\phi\right| \\
			&\leq \varlimsup_{n\to\infty}\sup_{\|\phi\|=1}\Bigg(
			\int_{B_R}\big|f(x,u_n)-f(x,u)\big|\,|\phi| \\
			&\quad + \int_{B_R^c}\big|f(x,u_n)-f(x,u)\big|\,|\phi|\Bigg) \\
			&\leq 2M\varepsilon.
		\end{aligned}
		\]
		Let $\varepsilon\to 0$, we deduce $K'(u_n)\to K'(u)$ in $X^*$. Therefore $K'$ is compact. The proof of the lemma is complete.
	\end{proof}
	\noindent
	\textbf{Proof of Theorem \ref{th1.2}} According to the proof of Lemma \ref{lem3.5}, we need to obtain
	\[
	\varlimsup_{n\to\infty}\int f(x,u_n)(u_n-u)=0,
	\]
	from $u_n\rightharpoonup u$ in $X$.
	
	Indeed, if $u_n\rightharpoonup u$ in $X$, by Lemma \ref{lem5.1}, we have $K'(u_n)\to K'(u)$ and
	\[
	\begin{aligned}
		\left|\int f(x,u_n)(u_n-u)\right|
		&= \bigl|\langle K'(u_n),u_n-u\rangle\bigr| \\
		&\leq \bigl|\langle K'(u_n)-K'(u),u_n-u\rangle\bigr|+\bigl|\langle K'(u),u_n-u\rangle\bigr| \\
		&\leq \|K'(u_n)-K'(u)\|\,\|u_n-u\|+o(1)\to 0.
	\end{aligned}
	\]
	Therefore, similar to the proof of Lemma \ref{lem3.5}, we can show that the functional $J$ satisfies the $(PS)$ condition. Moreover, by invoking Lemmas \ref{lem4} and \ref{group1}, using the same proof we conclude that under the assumptions of Theorem \ref{th1.2}, $J$ has a nontrivial critical point $u$. Thus, $u$ is a nontrivial solution of \eqref{eq:4}. \qed
	
	\section{Proof of Theorems \ref{th1.4} and \ref{th1.5}}
	In this section, let $u$ denote the solution of system \eqref{eq:4} which we have obtained in the preceding section. We will consider the regularity and exponential decay of $u$, and thus complete the proof of Theorem \ref{th1.4}. Lastly, we obtain the existence of a ground state solution of the system \eqref{eq:4} to complete the proof of Theorem \ref{th1.5}.
	\subsection{Exponential decay}
	\begin{lem}\label{lem5.11}
		Under conditions $(V_1)$ and $(f_1)$, if $u$ is a nontrivial solution of \eqref{eq:4}, then $u \in C^{1,\alpha}_{loc}(\mathbb{R}^3)$.
	\end{lem}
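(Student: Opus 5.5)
Write the first equation as $-\Delta u = g(x) := -V(x)u - \phi_u u + f(x,u)$ and obtain $u\in L^\infty_{loc}$, in fact $u\in L^r$ for every $r<\infty$, by a Moser iteration (or Brezis--Kato argument). Then apply Calderón--Zygmund $W^{2,r}_{loc}$ estimates and Morrey's embedding to conclude $C^{1,\alpha}_{loc}$.

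\textbf{Step 1: the potential term.} By Lemma \ref{th2.3}$(iii)$, $\phi_u\in L^\infty(\mathbb{R}^3)\cap C_0(\mathbb{R}^3)$ since $u\in X$. Together with the boundedness of $V$ from $(V_1)$, the linear part $(V+\phi_u)u$ has a bounded coefficient.

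\textbf{Step 2: the nonlinearity.} By $(f_1)$, $|f(x,u)|\le C(|u|+|u|^5)$. Hence
\[
|g(x)|\le h(x)\,(1+|u|),\qquad h:=C\big(1+\|V\|_\infty+\|\phi_u\|_\infty+|u|^4\big),
\]
and $|u|^4\in L^{3/2}(\mathbb{R}^3)$ because $u\in L^6$. This is exactly the critical Brezis--Kato setting $-\Delta u = a(x)u$ with $a\in L^{3/2}+L^\infty$.

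\textbf{Step 3: Moser iteration.} Test the equation with $u\,\min\{|u|,L\}^{2(\beta-1)}$ (truncation at level $L$), multiplied by a cutoff if one works locally. Split $|u|^4=|u|^4\chi_{\{|u|>K\}}+|u|^4\chi_{\{|u|\le K\}}$, choosing $K$ so large that the $L^{3/2}$ norm of the first piece is small enough to absorb the corresponding term into the gradient term via the Sobolev inequality. Letting $L\to\infty$ gives $u\in L^{6\beta}$ whenever $u\in L^{2\beta}$. Iterating from $\beta=3$ shows $u\in L^r$ for all $r<\infty$; one could also push the iteration to $L^\infty$, but that is not needed here.

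\textbf{Step 4: elliptic regularity.} From Step 3, $g\in L^r_{loc}$ for every $r<\infty$, since $|u|^5\in L^r_{loc}$. The interior $L^r$ estimates give $u\in W^{2,r}_{loc}$ for all $r<\infty$. Taking $r>3$, Morrey's embedding yields $u\in C^{1,\alpha}_{loc}(\mathbb{R}^3)$ with $\alpha=1-3/r$.

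The main obstacle is Step 3. Because of the critical exponent 5, the coefficient $|u|^4$ lies only in $L^{3/2}$, and the absorption argument must be carried out carefully with truncated test functions. The remaining steps are standard.
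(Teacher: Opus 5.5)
Your proposal is correct and follows essentially the same route as the paper: the paper also writes $-\Delta u = a(x)u$ with $a = f(x,u)/u - V - \phi_u \in L^{3/2}_{loc}$ (using $\phi_u\in L^\infty$ from Lemma \ref{th2.3}), invokes the Brezis--Kato theorem (which you unpack as a truncated Moser iteration) to get $u\in L^q_{loc}$ for all $q<\infty$, and concludes via interior Calder\'on--Zygmund $W^{2,q}$ estimates and Sobolev embedding. Your explicit requirement $r>3$ in the Morrey step is the correct threshold, slightly more precise than the paper's phrase ``for any $q\ge 2$''.
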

	\begin{proof}
		For any $r>0$, let $B_1=B_r(0)$ and $B_2=B_{2r}(0)$. Obviously, $B_1 \subset\subset B_2$.
		Noting that $u$ is also a nontrivial solution of the following:
		\[
		- \Delta u=f(x,u)-V(x)u-\phi_u u:=g(u).
		\]
		Let
		\[
		a(x):= \frac{g(u)}{u}= \frac{f(x,u)}{u}-V(x)-\phi_u.	\]
		From  $(V_1)$, $(f_1)$ and H\"older inequality, it is clear to see that 
		\[
		\int_{B_2}|a(x)|^\frac{3}{2}< +\infty,
		\]
		that is
		\[
		a(x) \in L^\frac{3}{2}(B_2).
		\]
		It follows from the Br\'ezis-Kato Theorem \cite{Brezis1978} that $u \in L^q(B_2)$ for any $1 \le q <\infty$. From Lemma \ref{th2.3}, we have $\phi_u \in L^\infty$. Since $(V_1)$, $(f_1)$, $\phi_u \in L^\infty$ and $u \in L^q(B_2)$ for any $1 \le q <\infty$, we have
		\begin{align*}
			\int_{B_2}|a(x)u|^q&=\int_{B_2}|f(x,u)-V(x)u-\phi_{u}u|^q\\
			&\leq\int_{B_2}|u|^q+C_1\int_{B_2}|u|^{5q}+C_2\int_{B_2}|u|^{q}+C_3\int_{B_2}|u|^{q}< +\infty,
		\end{align*}
		that is
		\[
		a(x)u=f(x,u)-V(x)u-\phi_{u}u \in L^q(B_2).
		\]
		We have
		\[
		-\Delta u=a(x)u \in L^q(B_2).
		\]
		By the Calderon-Zygmund inequality and Schauder estimates, see Theorem 9.9 of \cite{Gilbarg2001}, we have $u \in W^{2,q}(B_2)$. It follows from classic interior $L^q$-estimates that
		\[
		\|u\|_{W^{2,q}(B_1)} \le C(\|a(x)u\|_{L^q(B_2)}+\|u\|_{L^q(B_2)}),
		\]
		where $C=C(r,q)$. For any $ q \ge 2$, by the Sobolev embedding theorem we get $u \in C^{1,\alpha}(\bar{B_1})$ for some $\alpha \in (0,1)$. Consequently, we deduce that $u \in L^\infty(B_1)$ and $u \in C^{1,\alpha}_{loc}(\mathbb{R}^3)$ with $\alpha \in (0,1)$.\par
	\end{proof}
	\begin{lem}\label{lem5.12}
		Under conditions $(V_2)$ and $(f_1)$, if $u$ is a nontrivial solution of \eqref{eq:4}, then 
		\[ |u(x)| \to 0 \quad \text{as} \quad |x| \to +\infty. \]
	\end{lem}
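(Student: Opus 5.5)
The plan is to prove decay by a local uniform $L^\infty$ estimate of Moser type, applied on unit balls $B_1(y)$ with $|y|$ large, combined with the fact that $\int_{B_2(y)} u^2\to 0$ as $|y|\to\infty$ because $u\in H^1(\mathbb{R}^3)$. Write the first equation as
\[
-\Delta u + \bigl(V(x)+\phi_u\bigr)u = f(x,u).
\]
By Lemma \ref{th2.3}$(ii)$ we have $\phi_u\ge 0$, and by $(V_2)$, $V\ge M_1>0$ for $|x|\ge R_1$. Hence for $|y|\ge R_1+2$ the coefficient $c(x):=V(x)+\phi_u(x)$ is nonnegative on $B_2(y)$. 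Testing against $u|u|^{2(\beta-1)}\eta^2$ (truncated), with $\eta$ a cutoff, lets us simply drop the term $\int c(x)u^2|u|^{2(\beta-1)}\eta^2$. We are then left with the subsolution-type inequality $-\Delta|u|\le |f(x,u)|$ in the weak sense on $B_2(y)$.

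For the right-hand side, $(f_1)$ gives $|f(x,u)|\le C(|u|+|u|^5) = C(1+|u|^4)|u| =: h(x)|u|$. The critical term $|u|^4$ needs care. From Lemma \ref{lem5.11} (or directly from Brézis--Kato as in that proof) we know $u\in L^q_{loc}$, but we need bounds uniform in $y$. So I would run Moser iteration in the standard Brézis--Kato/Trudinger manner, using the smallness $\sup_{|y|\ge R}\|u\|_{L^6(B_2(y))}\to 0$ as $R\to\infty$, which holds since $u\in L^6(\mathbb{R}^3)$. This smallness lets us absorb $\int |u|^4 (|u|^{\beta}\eta)^2$ into the left via Hölder and Sobolev:
\[
\int |u|^4 w^2\le \|u\|_{L^6(B_2(y))}^4\|w\|_6^2\le \varepsilon S\|\nabla w\|_2^2+\dots.
\]
This first yields a uniform bound $\|u\|_{L^{q_0}(B_{3/2}(y))}\le C$ for some $q_0>6$. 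After that, $h\in L^{q_0/4}$ with $q_0/4>3/2$, and the standard local maximum principle (Gilbarg--Trudinger, Theorem 8.17) applies:
\[
\sup_{B_1(y)}|u|\le C\|u\|_{L^2(B_2(y))},
\]
with $C$ independent of $y$ for $|y|$ large.

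Finally, $\|u\|_{L^2(B_2(y))}\to 0$ as $|y|\to\infty$ because $u\in L^2(\mathbb{R}^3)$, so $|u(x)|\to 0$. (Continuity from Lemma \ref{lem5.11} makes the pointwise statement meaningful.)

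The main obstacle is uniformity of the constants in the iteration with respect to the center $y$, given the critical growth $|u|^5$. This is exactly where the uniform tail smallness of $\|u\|_{L^6}$ on far-away balls is used. An alternative route is to first establish $u\in L^\infty(\mathbb{R}^3)$ globally by a global Moser iteration, which would make $h$ bounded and the local estimate immediate.
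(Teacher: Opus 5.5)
Your proposal is correct and is essentially the paper's argument. First, a Moser step in which the smallness of the $L^6$ tail of $u$ absorbs the critical $|u|^4$ term and gives integrability beyond $L^6$ (the paper gets $L^{18}(|x|\ge R)$). Then a Moser-type $L^\infty$ bound in terms of a lower Lebesgue norm on the far region, which tends to $0$ because $u\in H^1(\mathbb{R}^3)$.

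The differences are in packaging. You work on unit balls $B_2(y)$, which makes uniformity in $y$ automatic, and you quote the local maximum principle instead of iterating by hand on exterior domains $\{|x|\ge R\}$ for $u^+$ and $u^-$ separately. One citation point: Gilbarg--Trudinger Theorem 8.17 assumes bounded lower-order coefficients. So either cite the version allowing a zero-order coefficient in $L^{q}$, $q>3/2$ (Trudinger), or keep $|f(x,u)|\in L^{q/2}$, $q>3$, as a right-hand side, whose norm on far balls also tends to $0$ by your first step.
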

	\begin{proof}
		The proof is similar to the proof of Theorem 1.11 in \cite{1990Some}. For completeness, we give the
		proof of this Lemma. For any $R>2R_1>0$, $0<r<\frac{R}{2}$, let \(\eta \in C^\infty(\mathbb{R}^3)\), \(0 \leq \eta \leq 1\) with
		\[
		\eta(x) =
		\begin{cases}
			1 & \text{if } |x| \geq R, \\[4pt]
			0 & \text{if } |x| < R - r,
		\end{cases}
		\quad \text{and} \quad
		|\nabla \eta| \leq \frac{2}{r}.
		\]
		We set $u^+=\max\{0,u\}$, $u_L^+=\min\{u^+,L\}$, where $L>0$. To this end, we suppose that
		\[
		v=\eta^2 u^+ (u_L^+)^{2(\gamma-1)} \quad \text{and} \quad W_L=\eta u^+(u_L^+)^{\gamma-1},
		\]
		for any $\gamma \ge 1$. Let $v$ be the test function, multiplying both sides of the equation by $v$ and integrating over $\mathbb{R}^3$, we have
		\[
		\int\big(\nabla u\cdot \nabla v+ V(x)uv+\phi_u uv\big)=\int f(x,u)v
		\]
		By \eqref{f3.01}, let $\varepsilon=\frac{M_1}{2}$ and $C_\varepsilon=C$, one has
		\begin{equation}\label{7.1}
			\begin{aligned}
				\int f(x,u)v &= \int f(x,u)\eta^2 u^+ (u_L^+)^{2(\gamma-1)} \\
				&\le\frac{M_1}{2}\int\eta^2 (u^+)^2 (u_L^+)^{2(\gamma-1)}+C\int\eta^2 (u^+)^6 (u_L^+)^{2(\gamma-1)}.
			\end{aligned}
		\end{equation}
		By direct calculation, we obtain
		\begin{equation}\label{7.3}
			\begin{aligned}
				\int \nabla u\cdot \nabla v &= 2\int\nabla u^+\cdot \nabla\eta \eta u^+(u_L^+)^{2(\gamma-1)}+\int|\nabla u^+|^2\eta^2(u_L^+)^{2(\gamma-1)} \\
				&\quad +2(\gamma-1)\int\nabla u^+\cdot\nabla u_L^+\eta^2u^+(u_L^+)^{2(\gamma-1)-1}\\
				&\ge 2\int\nabla u^+\cdot \nabla\eta \eta u^+(u_L^+)^{2(\gamma-1)}+\int|\nabla u^+|^2\eta^2(u_L^+)^{2(\gamma-1)}.
			\end{aligned}
		\end{equation}
		It follows from \eqref{7.1}, \eqref{7.3} and $(V_2)$ that,
		\begin{equation*}
			\begin{aligned}
				\int|\nabla u^+|^2\eta^2(u_L^+)^{2(\gamma-1)}
				&\le -2\int\nabla u^+\cdot \nabla\eta \eta u^+(u_L^+)^{2(\gamma-1)}+\int \nabla u\cdot \nabla v \\
				&=-2\int\nabla u^+\cdot \nabla\eta \eta u^+(u_L^+)^{2(\gamma-1)}+\int (f(x,u)v-V(x)uv-\phi_u uv)\\
				&\le -2\int\nabla u^+\cdot \nabla\eta \eta u^+(u_L^+)^{2(\gamma-1)}+\frac{M_1}{2}\int\eta^2 (u^+)^2 (u_L^+)^{2(\gamma-1)}\\
				&\quad +C\int\eta^2 (u^+)^6 (u_L^+)^{2(\gamma-1)}-M_1\int\eta^2 (u^+)^2 (u_L^+)^{2(\gamma-1)}.
			\end{aligned}
		\end{equation*}
		Applying the Young's inequality to the cross term, we can obtain
		\begin{equation*}
			\begin{aligned}
				\int|\nabla u^+|^2\eta^2(u_L^+)^{2(\gamma-1)}
				&\le \frac{1}{2}\int|\nabla u^+|^2\eta^2(u_L^+)^{2(\gamma-1)}+C\int|\nabla\eta|^2 (u^+)^2 (u_L^+)^{2(\gamma-1)}\\
				&\quad +C\int\eta^2 (u^+)^6 (u_L^+)^{2(\gamma-1)}.
			\end{aligned}
		\end{equation*}
		This implies that
		\begin{equation}\label{7.4}
			\int|\nabla u^+|^2\eta^2(u_L^+)^{2(\gamma-1)}
			\le C\int|\nabla\eta|^2 (u^+)^2 (u_L^+)^{2(\gamma-1)}+C\int\eta^2 (u^+)^6 (u_L^+)^{2(\gamma-1)}.
		\end{equation}
		According to the definition of $W_L$,
		combining \eqref{7.4} and Sobolev embedding inequality, let $\gamma=3$, we have,
		\begin{equation}\label{7.6}
			\begin{aligned}
				\|W_L\|_{2\gamma}^2 &\le C \int |\nabla W_L|^2 dx \\
				&\le C \left\{ \int |\nabla \eta|^2 (u^+)^2 (u_L^+)^{2(\gamma-1)} + \gamma^2\int (\nabla u^+)^2 \eta^2 (u_L^+)^{2(\gamma-1)} \right\} \\
				&\le C\gamma^2\left\{\int|\nabla\eta|^2 (u^+)^2 (u_L^+)^{2(\gamma-1)}+\int\eta^2 (u^+)^6 (u_L^+)^{2(\gamma-1)}\right\}.\\
			\end{aligned}
		\end{equation}
		We claim that
		\[
		u \in L^{18}(|x| \ge R)
		\]
		for \( R \) large enough. In fact, from \eqref{7.6} we have
		\begin{equation*}
			\begin{aligned}
				\left\{\int(\eta u^+(u_L^+)^2)^6\right\}^\frac{1}{3}
				&\le 9C \left\{ \int (\eta u^+ (u_L^+)^2)^6\right\}^\frac{1}{3} \left\{ \int_{|x|>R-r} (u^+)^6\right\}^\frac{2}{3}\\
				&+9C\int |\nabla \eta|^2 (u^+)^2 (u_L^+)^4. \\
			\end{aligned}
		\end{equation*}
		Since $u^+ \in L^6(\mathbb{R}^3)$, $|u^+|_{L^6(|x|>R)} <\frac{1}{9C}$ for \( R \) large enough.
		Hence, we get
		\begin{align*}
			\left\{ \int_{|x|\ge R} \left( u^+ (u_L^+)^2 \right)^{6} \right\}^{\frac{1}{3}}
			&\le \left\{ \int \left( \eta u^+ (u_L^+)^2 \right)^{6} \right\}^{\frac{1}{3}} \\
			&\le \int |\nabla \eta|^2 (u^+)^2 (u_L^+)^4 \\
			&\le \frac{C}{r^2} \int (u^+)^6 <+\infty,
		\end{align*}
		which implies that \( u \in L^{18}(|x| \ge R) \). Next, we let \( \gamma = \frac{3(t-1)}{t} \) with \( t = \frac{9}{2} \) and \( u^+ \in L^{\frac{2\gamma t}{t-1}}(|x| \ge R-r) \). Then by \eqref{7.6} we have
		\begin{align*}
			\|W_L\|_6^2 &=\|W_L\|_{\frac{2\gamma t}{t-1}}^2\\
			&\le C \gamma^2 \left( \int_{|x|\ge R-r} \left( \eta^2 (u^+)^{2\gamma} \right)^{\frac{t}{t-1}} dx \right)^{1-\frac{1}{t}} \left( \int_{|x|\ge R-r} (u^+)^{4t} dx \right)^{\frac{1}{t}} \\
			&\quad + C \gamma^2 \frac{[R^3 - (R-r)^3]}{r^2}^\frac{1}{t} \left( \int_{|x|\ge R-r} (u^+)^{{2\gamma} \frac{t}{t-1}} dx \right)^{1-\frac{1}{t}}\\
			&\le C \gamma^2 \left(1+\frac{R^\frac{3}{t}}{r^2}\right)\left( \int_{|x|\ge R-r} (u^+)^{{2\gamma} \frac{t}{t-1}} dx \right)^{1-\frac{1}{t}}.
		\end{align*}
		Let \( L \to +\infty \) in above inequality, then, we obtain
		\[
		\|u^+\|_{L^{6\gamma}(|x|\ge R)}^{2\gamma} \le C \gamma^2 \left(1 + \frac{R^{\frac{3}{t}}}{r^2}\right) \left(\int_{|x|\ge R-r} (u^+)^{\frac{2\gamma t}{t-1}} dx\right)^{1-\frac{1}{t}}.
		\]
		Let \( \chi = \frac{3(t-1)}{t} \), \( s = \frac{2t}{t-1} \), then
		
		\begin{equation}\label{7.7}
			\|u^+\|_{L^{\gamma\chi s}(|x|\ge R)} \le C^{\frac{1}{2\gamma}} \gamma^{\frac{1}{\gamma}} \left(1 + \frac{R^{\frac{3}{t}}}{r^2}\right)^{\frac{1}{2\gamma}} \|u^+\|_{L^{\gamma s}(|x|\ge R-r)}.
		\end{equation}
		If we set \( \gamma = \chi^m \) (\( m = 1,2,\dots \)), from \eqref{7.7} then we get
		\begin{equation}\label{7.8}
			\|u^+\|_{L^{\chi^{m+1} s}(|x|\ge R)} \le C^{\frac{1}{2\chi^m}} \chi^{m\chi^{-m}} \left(1 + \frac{R^{\frac{3}{t}}}{r^2}\right)^{\frac{1}{2\chi^m}} \|u^+\|_{L^{\chi^m s}(|x|\ge R-r)}.
		\end{equation}
		It is clear that \( 2 > \frac{N}{t} \). Let \( r_m = 2^{-(m+1)}R \), by \eqref{7.8} and the Moser iteration implies
		\begin{align*}
			\|u^+\|_{L^{\chi^{m+1} s}(|x|\ge R)}
			&\le \|u^+\|_{L^{\chi^{m+1} s}(|x|\ge R-r_{m+1})} \\
			&\le C^{\frac{1}{2}\chi^{-m}} \chi^{m\chi^{-m}} \left(1 + \frac{(R - r_{m+1})^{\frac{3}{t}}}{r_m^2}\right)^{\frac{1}{2\chi^m}} \|u^+\|_{L^{\chi^m s}(|x|\ge R-r_m)} \\
			&\le C^{\frac{1}{2}\chi^{-m}+\frac{1}{2}\chi^{-(m-1)}} \chi^{m\chi^{-m}+(m-1)\chi^{-(m-1)}} (1+2^{2(m+1)})^{\frac{1}{2\chi^m}}\\ &(1+2^{2m})^{\frac{1}{2\chi^{m-1}}} \|u^+\|_{L^{\chi^{m-1} s}(|x|\ge R-r_{m-1})} \\
			&\quad\vdots \\
			&\le C^{\frac{1}{2}\sum_{i=1}^m \chi^{-i}} \chi^{\sum_{i=1}^m i\chi^{-i}} \exp\left(\sum_{i=1}^m \frac{\ln(1+2^{2(i+1)})}{2\chi^i}\right) \|u^+\|_{L^{\chi s}(|x|\ge R-r_1)} \\
			&\le C\|u^+\|_{L^6(|x|\ge \frac{1}{2}R)}.
		\end{align*}
		Letting \( m \to +\infty \) in the last inequality, we obtain
		\[
		\|u^+\|_{L^\infty(|x|\ge R)} \le C\|u^+\|_{L^6(|x|\ge \frac{1}{2}R)}\le \frac{\varepsilon}{2}.
		\]
		By the same argument, we can show that
		\[
		\|u^-\|_{L^\infty(|x|\ge R)} \le C\|u^-\|_{L^6(|x|\ge \frac{1}{2}R)}\le \frac{\varepsilon}{2},
		\]
		where \( u^- = \max(-u,0) \). Hence, for any \( \varepsilon > 0 \) fixed, choose \( R > 1 \) large enough one infers
		\[
		\|u\|_{L^\infty(|x|\ge R)} \le \varepsilon.
		\]
		This implies that
		\[
		|u(x)| \to 0 \quad \text{as} \quad |x| \to +\infty.
		\]
	\end{proof}
	\noindent
	\textbf{Proof of Theorem \ref{th1.4}}  
	By Lemma \ref{lem5.12} and $(f_1)$, see \cite{MR4143621,Moroz2013}, we have
	\[
	\lim_{|x| \to \infty} \frac{f(x,u)u}{|u|^2} = 0.
	\]
	Thus, there exists $\rho > R_1 > 0$ such that for $|x| \geq \rho$
	\[
	\frac{f(x,u)u}{|u|} \leq \frac{3M_1}{4}|u|.
	\]
	Let
	\begin{equation*}
		\operatorname{sign}(u) = 
		\begin{cases}
			\dfrac{u}{|u|}, & \text{if } u(x) \neq 0, \\[1em]
			0, & \text{if } u(x) = 0.
		\end{cases}
	\end{equation*}
	Then Kato's inequality \cite{Kato1972} states that $-\Delta |u| \leq -\operatorname{sign}(u)\Delta u$, which yields that in $\mathbb{R}^3 \setminus B_\rho$
	\begin{align*}
		-\Delta |u| &\leq -\operatorname{sign}(u)\Delta u \\
		&= \frac{u}{|u|} \bigl( -V(x)u -\phi_u u+ f(x,u) \bigr) \\
		&= -V(x)|u| -\phi_u |u|+ \frac{f(x,u)u}{|u|} \\
		&\le -M_1|u| + \frac{3M_1}{4}|u| = -\frac{M_1}{4}|u|.
	\end{align*} 
	That is
	\[
	-\Delta |u| + \frac{M_1}{4}|u| \leq 0 \quad \text{for } |x| \geq \rho.
	\]
	
	By Lemma 6.4 in \cite{Moroz2013}, there exists a nonnegative radial function $v \in C^2(\mathbb{R}^3 \setminus B_\rho)$ such that
	\[
	\begin{cases}
		-\Delta v + \frac{M_1}{4}v = 0, & \text{if } x \in \mathbb{R}^3 \setminus B_\rho, \\[4pt]
		v(x) = |u(x)|, & \text{if } |x| = \rho, \\[4pt]
		v(x) \to 0, & \text{as } |x| \to +\infty.
	\end{cases}
	\]
	Moreover, there exists $C > 0$ such that for $x \in \mathbb{R}^3 \setminus B_\rho$
	\[
	v(x) \leq C|x|^{-1} e^{-\frac{\sqrt{M_1}}{2}|x|}.
	\]
	Hence, by the comparison principle, for sufficiently large $|x|$
	\[
	|u(x)| \leq v(x) \leq C|x|^{-1} e^{-\frac{\sqrt{M_1}}{2}|x|}.
	\]
	This completes the proof of Theorem \ref{th1.4}.
	\qed
	\subsection{A ground state solution}
	In this section, to show the existence of a ground state solution of the system \eqref{eq:4}, we first construct a
	suitable constraint set
	\[S=\{u \in X\setminus\{0\}|J'(u)=0\}\]
	and let
	\[m=\inf_{u \in S} J(u).\]
	From Theorem \ref{th1.1}, there exists \(\bar u\) such that \(J(\bar u)<+\infty\). Hence
	\[m=\inf_{u \in S} J(u)\le J(\bar u)<+\infty.\]
	In the next lemma, we will prove that $m>-\infty$.
	\begin{lem}
		Under the assumptions of Theorem 1.4, we have $m>-\infty$.
	\end{lem}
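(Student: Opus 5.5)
For $u\in S$ we have $\langle J'(u),u\rangle=0$. Combining this with $(f_2)$ gives
\[
J(u)=J(u)-\tfrac14\langle J'(u),u\rangle=\tfrac14\left(\|u^+\|^2-\|u^-\|^2\right)+\int\Big(\tfrac14 f(x,u)u-F(x,u)\Big)\ge \tfrac14\left(\|u^+\|^2-\|u^-\|^2\right).
\]
Both the Bopp--Podolsky term and the nonlinear remainder drop out with the right sign. So it suffices to show that $\|u^-\|^2-\|u^+\|^2$ is bounded above on $S$. This is the only place where the indefiniteness of $Q$ enters, and it is the main obstacle.

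I would argue by contradiction, following the scheme of Lemma \ref{lem4}. Suppose $u_n\in S$ with $J(u_n)\to-\infty$. By the inequality above, $\|u_n^+\|^2-\|u_n^-\|^2\le 4J(u_n)\to-\infty$, so $\|u_n^-\|\to\infty$ and hence $\|u_n\|\to\infty$.

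Set $v_n=u_n/\|u_n\|$. Since $\dim X^-<\infty$, up to a subsequence $v_n^-\to v^-$ strongly and $v_n\rightharpoonup v$.

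\textbf{Case $v^-=0$.} Then $\|v_n^+\|\to1$, so $\|u_n^+\|\ge\|u_n^-\|$ for large $n$. This contradicts $\|u_n^+\|^2-\|u_n^-\|^2\to-\infty$.

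\textbf{Case $v^-\neq0$.} Then $v\neq0$, since its $X^-$-component is $v^-$. Using $\langle J'(u_n),u_n\rangle=0$ and Lemma \ref{lem3.1},
\[
\int\frac{f(x,u_n)u_n}{\|u_n\|^4}=\frac{\|u_n^+\|^2-\|u_n^-\|^2}{\|u_n\|^4}+\frac{1}{\|u_n\|^4}\int\phi_{u_n}u_n^2\le 1+a_0 .
\]
On the other hand, $(f_2)$ and Fatou's lemma, applied on $\{v\neq0\}$ exactly as in \eqref{f3.2}, give
\[
\int\frac{f(x,u_n)u_n}{\|u_n\|^4}\ge 4\int_{\{v\ne0\}}\frac{F(x,u_n)}{u_n^4}v_n^4\to+\infty,
\]
which is a contradiction.

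Both cases are impossible, so $J$ is bounded below on $S$, i.e. $m>-\infty$. The assumption $(f_1)$ is needed only to ensure that $J$ is $C^1$ and that $S$ is nonempty by Theorem \ref{th1.1}. The only delicate point is the second case, where the superquartic growth has to beat the quartic bound coming from the nonlocal term. Lemma \ref{lem3.1} supplies exactly that quartic bound.
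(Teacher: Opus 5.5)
Your proof is correct and is essentially the paper's argument: contradiction, normalization $v_n=u_n/\|u_n\|$, and the Fatou/superquartic estimate set against the quartic bound of Lemma \ref{lem3.1}. The differences are cosmetic: you get the blow-up from $J\ge\frac14\bigl(\|u^+\|^2-\|u^-\|^2\bigr)$ on $S$ and split on $v^-$ as in Lemma \ref{lem4}, whereas the paper uses $J(u_n)\ge-\frac{a_0}{4}\|u_n\|^4$ and splits on $v$ as in Lemma \ref{lem3.2}.
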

	\begin{proof}
		By contradiction, we assume that $m=-\infty$. Then there exists $\{u_n\}\subset S$ such that $J(u_n)\to -\infty$ and $J'(u_n)=0$.
		From $(f_2)$ and \eqref{f3.11}, we have
		\begin{align*}
			J(u_n)
			&= \frac{1}{2} \int |\nabla u_n|^2 +\frac{1}{2} \int V(x) u_n^2 + \frac{1}{4} \int \phi_{u_n} u_n^2 - \int F(x,u_n) \\
			&= \frac{1}{2} \int f(x,u_n)u_n - \frac{1}{2} \int \phi_{u_n} u_n^2 + \frac{1}{4} \int \phi_{u_n} u_n^2 - \int F(x,u_n) \\
			&= -\frac{1}{4} \int \phi_{u_n} u_n^2 + \frac{1}{2} \int f(x,u_n)u_n - \int F(x,u_n) \\
			&\ge \frac{1}{4} \int f(x,u_n)u_n - \frac{a_0}{4} \|u_n\|^4 \\
			&\ge -\frac{a_0}{4} \|u_n\|^4,
		\end{align*}
		then $\|u_n\|\to +\infty$.  Let \(v_n = \|u_n\|^{-1} u_n\), then
		\[
		v_n = v_n^+ + v_n^- \rightharpoonup v = v^+ + v^- \in X, \quad v_n^\pm, v^\pm \in X^\pm.
		\]
		If \(v = 0\), then \(v_n^- \to v^- = 0\) because \(\dim X^- < \infty\). Since
		\[
		\|v_n^+\|^2 + \|v_n^-\|^2 = 1,
		\]
		for \(n\) large enough we have
		\[
		\|v_n^+\|^2 - \|v_n^-\|^2 \geq \frac{1}{2}.
		\]
		Hence
		\begin{align*}
			J(u_n) - \frac{1}{4} \langle J'(u_n), u_n \rangle &= \frac{1}{4} \|u_n\|^2 \left( \|v_n^+\|^2 - \|v_n^-\|^2 \right) + \int \left( \frac{1}{4} f(x, u_n) u_n - F(x, u_n) \right) \\
			&\geq  \frac{1}{8}\|u_n\|^2,
		\end{align*}
		contradicting $\|u_n\|\rightarrow+\infty$.
		
		If $v\neq 0$, the set $A=\{x\in \mathbb{R}^3: v(x)\neq 0\}$ has positive Lebesgue measure. For $x\in A$ we have $|u_n(x)|\rightarrow+\infty$ and
		\begin{align*}
			\frac{F(x, u_n(x))}{u_n^4(x)} v_n^4(x) \to +\infty,
		\end{align*}
		thanks to \eqref{eq:5}. By Fatou lemma, for $x\in A$ we get
		\[\int\frac{F(x, u_n)}{u_n^4} v_n^4 \ge \int_A \frac{F(x, u_n)}{u_n^4} v_n^4 \to +\infty.\]
		It follows from $\langle J'(u_n),u_n\rangle\rightarrow 0$ and  Lemma \ref{lem3.1} that
		\begin{align*}
			\int_A \frac{4F(x,u_n)}{u_n^4} v_n^4&\leq\int_A \frac{f(x,u_n)u_n}{u_n^4} v_n^4= \frac{1}{\|u_n\|^4} \int_A f(x,u_n)u_n \\
			&\leq \frac{1}{\|u_n\|^4} \int f(x,u_n)u_n \\
			&= \frac{1}{\|u_n\|^4} \left(\int \left(|\nabla u_n|^2 + V(x)u_n^2\right) + \int \phi_{u_n} u_n^2 - \langle J'(u_n),u_n\rangle\right) \\
			&\leq 2+ \frac{1}{\|u_n\|^4} \int \phi_{u_n} u_n^2 \\
			&\leq 2 + a_0,
		\end{align*}
		a contradiction. Thus $\{u_n\}$ is bounded in $X$ and $m>-\infty$.
	\end{proof}
	To show that the chosen minimizing sequence does not converge to $0$, the following lemma plays a key role in the proof.
	\begin{lem}\label{8.1}
		The zero function $0$ is an isolated critical point of $J$.
	\end{lem}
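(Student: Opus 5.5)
We argue by contradiction. Suppose there is a sequence of nontrivial critical points $u_n\in S$ with $u_n\to 0$ in $X$. The idea is to show that for $\|u\|$ small, $J'(u)$ is essentially the linear operator $u^+-u^-$, which is invertible by the non-degeneracy in $(V_1)$. This is incompatible with $J'(u_n)=0$ and $u_n\neq 0$.

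First we test $J'(u_n)=0$ against $u_n^+-u_n^-$. Write $u_n=u_n^++u_n^-$ and use the orthogonality of $X^+$ and $X^-$ with respect to both $\|\cdot\|$ and $Q$. This gives
\[
0=\langle J'(u_n),u_n^+-u_n^-\rangle=\|u_n^+\|^2+\|u_n^-\|^2+\int\phi_{u_n}u_n(u_n^+-u_n^-)-\int f(x,u_n)(u_n^+-u_n^-).
\]
So $\|u_n\|^2$ equals the sum of two integrals, and it suffices to show each of them is $o(\|u_n\|^2)$.

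For the nonlocal term, Lemma \ref{th2.3}(vi), H\"older's inequality with exponents $6,3,2$ (using $\tfrac16+\tfrac13+\tfrac12=1$), and Sobolev embedding give
\[
\Big|\int\phi_{u_n}u_n(u_n^+-u_n^-)\Big|\le\|\phi_{u_n}\|_6\|u_n\|_3\|u_n^+-u_n^-\|_2\le C\|u_n\|^4 .
\]
Here $\|u_n^+-u_n^-\|=\|u_n\|$ by orthogonality.

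For the nonlinear term, we use \eqref{f3.01} with an arbitrary $\varepsilon>0$, together with H\"older and Sobolev:
\[
\Big|\int f(x,u_n)(u_n^+-u_n^-)\Big|\le\varepsilon\|u_n\|_2\|u_n^+-u_n^-\|_2+C_\varepsilon\|u_n\|_6^5\|u_n^+-u_n^-\|_6\le C\varepsilon\|u_n\|^2+C_\varepsilon C\|u_n\|^6 .
\]

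Combining the three displays gives
\[
\|u_n\|^2\le C\varepsilon\|u_n\|^2+C\|u_n\|^4+C_\varepsilon\|u_n\|^6 .
\]
We fix $\varepsilon$ with $C\varepsilon<\tfrac12$ and divide by $\|u_n\|^2>0$, which is allowed since $u_n\neq0$. This yields $\tfrac12\le C\|u_n\|^2+C_\varepsilon\|u_n\|^4\to0$, a contradiction. Hence there is a neighbourhood of $0$ containing no nonzero critical point, so $0$ is isolated.

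The only delicate point is the choice of test function. Testing with $u_n$ itself fails because $Q(u_n)=\|u_n^+\|^2-\|u_n^-\|^2$ is indefinite and gives no control of $\|u_n\|^2$. Testing with $u_n^+-u_n^-$ converts the indefinite form into the full norm, which is exactly where the non-degeneracy in $(V_1)$ enters. Note also that only $(V_1)$ and $(f_1)$ are used, so no compactness (and in particular not $(f_3)$) is needed.
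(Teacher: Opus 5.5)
Your proof is correct and is essentially the paper's argument. The paper tests $J'(u)=0$ separately against $u^+$ and $u^-$ and adds the two estimates, which amounts to your single test with $u^+-u^-$, using the same H\"older/Sobolev bounds from Lemma~\ref{th2.3}(vi) and \eqref{f3.01}. The only cosmetic difference is that the paper derives a uniform bound $\|u\|\ge\delta$ for all $u\in S$ directly, whereas you argue by contradiction along a sequence $u_n\to 0$.
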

	\begin{proof}
		By a direct computation, for any $u \in S$, where $u=u^++u^-$ with $u^\pm \in X^\pm$. We have
		\begin{align*}
			\langle J'(u), u^+ \rangle = \|u^+\|^2+\int \phi_{u}uu^+-\int f(x,u)u^+=0.
		\end{align*}
		Similarly,
		\[\langle J'(u), u^- \rangle=-\|u^-\|^2+\int\phi_uuu^-- \int f(x,u)u^-=0.\]
		From \eqref{f3.01}, Sobolev embedding inequality and  H\"older inequality, it is clear to see that
		\begin{align}\label{6.21}
			\| u^+ \|^2
			&= -\int \phi_uuu^+ + \int f(x,u) u^+ \nonumber \\
			&\le \|\phi_u\|_6\|u\|_2\|u^+\|_3 + \varepsilon \|u\|_2 \|u^+\|_2 + C_\varepsilon \|u\|_6^5 \|u^+\|_6 \nonumber \\
			&\le C\left( \|u\|^3 \|u^+\| + \varepsilon \|u\|\|u^+\|+ C_\varepsilon \|u\|^5 \|u^+\| \right).
		\end{align}
		Since $\|u\|^2=\|u^+\|^2+\|u^-\|^2$, we have $\|u\|\ge\|u^+\|$, $\|u\|\ge\|u^-\|$. It follows that
		\begin{align}
			\| u^+ \|^2\le C\left( \|u\|^4 + \varepsilon \|u\|^2+ C_\varepsilon \|u\|^6\right)
		\end{align}
		Similarly,
		\begin{align}\label{6.22}
			\| u^- \|^2
			\le  C\left( \|u\|^4 + \varepsilon \|u\|^2+ C_\varepsilon \|u\|^6\right).
		\end{align}
		Taking $\varepsilon=\frac{1}{4C}$, it follows from \eqref{6.21} and \eqref{6.22} that
		\begin{align*}
			\frac{1}{2}\|u\|^2 
			\le C\left( \|u\|^4 + C_\varepsilon \|u\|^6 \right).
		\end{align*}
		This implies that $\|u\|\ge \delta$ for some $\delta > 0$. 
	\end{proof}
	\noindent
	\textbf{Proof of Theorem \ref{th1.5}} 
	Taking a minimizing sequence $\{u_n\} \subset S$ such that $J'(u_n) = 0$ and $J(u_n) \to m$. Then  $\{u_n\}$ is the $(PS)$ sequence, i.e.,
	\[\sup_n |J(u_n)| < +\infty \quad \text{and} \quad J'(u_n) = 0.\]
	Then, by Lemmas \ref{lem3.2} and \ref{lem3.5}, we know that this sequence is bounded and converges strongly in $X$.  
	Thus, there exists $u_0$ such that $u_n \to u_0$ in $X$. By Lemma \ref{8.1}, we also get $\|u_0\|\ge \delta$ for some $\delta > 0$. Consequently, we have $u_0\neq 0$ and $u_0 \in S$. Therefore, $u_0$ is a ground state solution.
	\qed
	\section{Proof of Theorem \ref{th1.3}}
	In this section, we assume that $f(x,u)$ is odd with respect to $u$ and prove the existence of multiple solutions via the symmetric mountain pass theorem of Ambrosetti–Rabi-nowitz \cite{MR0345031}.
	
	\begin{prop}\label{thm6.1}\cite{Rabinowitz1984}
		Let $X$ be an infinite dimensional Banach space, $J \in C^1(X,\mathbb{R})$ be even, satisfies (PS) condition and $J(0)=0$. If $X=Y \oplus Z$ with $dim Y<\infty$. Moreover, $J$ satisfies
		\item[(i)] there are constants $\rho,\alpha > 0$, such that $J|_{\partial B_\rho \cap Z} \ge \alpha$;
		\item[(ii)] for any finite dimensional subspace $W\subset X$, there is an $R=R(W)$ such that $J\le0$ on $W\setminus B_{R(W)}$.
		\par\noindent Then $J$ has a  sequence of critical values $c_j \to \infty$.
	\end{prop}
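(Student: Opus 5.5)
The plan is the classical genus-based minimax of Ambrosetti–Rabinowitz, as presented in \cite{Rabinowitz1984}. Set $k=\dim Y$ and fix a sequence $e_1,e_2,\dots$ spanning a dense subspace of $Z$. Put $E_m=Y\oplus\mathrm{span}\{e_1,\dots,e_m\}$. By (ii) there is $R_m>\rho$ with $J\le 0$ on $E_m\setminus B_{R_m}$; set $D_m=B_{R_m}\cap E_m$. Let
\[
G_m=\{h\in C(D_m,X):\ h \text{ odd},\ h=\mathrm{id} \text{ on } \partial B_{R_m}\cap E_m\}.
\]
For $j\in\mathbb{N}$, let $\Gamma_j$ be the family of sets $h(\overline{D_m\setminus W})$, where $m\ge j$, $h\in G_m$, and $W\subset D_m$ is closed, symmetric, with $0\notin W$ and Krasnoselskii genus $\gamma(W)\le m+k-j$. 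Define
\[
c_j=\inf_{A\in\Gamma_j}\max_{u\in A}J(u).
\]
Then $c_j\le c_{j+1}$ because $\Gamma_{j+1}\subset\Gamma_j$.

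\textbf{Step 1 (lower bound).} I will prove an intersection lemma: if $j>k$ and $A\in\Gamma_j$, then $A\cap\partial B_\rho\cap Z\neq\emptyset$. Consequently (i) gives $c_j\ge\alpha>0$ for $j>k$. For the proof, consider $\mathcal{O}=\{x\in D_m: h(x)\in B_\rho\}$. The component $\tilde{\mathcal O}$ of $\mathcal{O}$ containing $0$ is a bounded symmetric neighbourhood of $0$ in $E_m$, so $\gamma(\partial\tilde{\mathcal O})=m+k$ by Borsuk–Ulam. Since $h=\mathrm{id}$ on $\partial B_{R_m}\cap E_m$ and $R_m>\rho$, the set $\partial\tilde{\mathcal O}$ lies in the interior of $D_m$, and $h$ maps it into $\partial B_\rho$. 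Let $P$ be the projection onto $Y$ along $Z$. If no point of $\overline{D_m\setminus W}\cap\partial\tilde{\mathcal O}$ were sent into $Z$, the odd map $P\circ h$ would be nonvanishing there, giving genus at most $k$. Subadditivity of the genus then yields $m+k\le \gamma(W)+k\le m+2k-j$, contradicting $j>k$. This is the delicate topological point, and I expect it to be the main obstacle.

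\textbf{Step 2 (criticality).} Because $J$ is even and satisfies (PS), the equivariant deformation lemma applies. Whenever $c$ is not a critical value, it gives an odd homeomorphism $\eta$ with $\eta(J^{c+\varepsilon})\subset J^{c-\varepsilon}$ that is the identity on $J^{0}$. Since $J\le0$ on $\partial B_{R_m}\cap E_m$, composition with $\eta$ preserves $G_m$, and hence $\Gamma_j$. The usual argument therefore shows that each $c_j$ with $j>k$ is a critical value.

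\textbf{Step 3 (unboundedness).} The key multiplicity fact is that if $c_j=\dots=c_{j+p}=c$, then $\gamma(K_c)\ge p+1$. To prove it, note first that $K_c$ is compact by (PS) and symmetric, and $0\notin K_c$ because $c>0$. Take a symmetric neighbourhood $N$ of $K_c$ with $\gamma(\overline N)=\gamma(K_c)$, and deform $J^{c+\varepsilon}\setminus N$ into $J^{c-\varepsilon}$. The excision property of $\Gamma_j$ states that $A\in\Gamma_{j+p}$ and $\gamma(\overline N)\le p$ imply $\overline{A\setminus N}\in\Gamma_j$; this gives a contradiction with the definition of $c_j$.

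Now suppose $c_j\to\bar c<\infty$. Then $K_{\bar c}$ is compact with finite genus $q$. Applying the same deformation around $K_{\bar c}$, and using that $c_j\in(\bar c-\varepsilon,\bar c]$ for all large $j$, leads to $c_{j+q}\le\bar c-\varepsilon$ for large $j$. This contradicts $c_{j+q}\to\bar c$. Hence $c_j\to+\infty$.

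Handling the levels that accumulate without being equal, in this final excision/deformation argument, is the second place where I expect care to be needed.
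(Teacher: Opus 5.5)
Your proposal is correct. It is exactly the classical Ambrosetti--Rabinowitz genus argument: the finite-dimensional spaces $E_m\supset Y$, the classes $\Gamma_j$ with their monotonicity, invariance and excision properties, the intersection lemma via Borsuk--Ulam and the projection onto $Y$, and the equivariant deformation around $K_{\bar c}$. The paper itself gives no proof of this proposition and simply quotes it from \cite{Rabinowitz1984}, whose standard argument is the one you reproduce.

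Two cosmetic points. First, density of $\mathrm{span}\{e_i\}$ in $Z$ is never used, so any linearly independent sequence in $Z$ suffices; this also covers nonseparable $Z$. Second, in Step 3 excision sends $A\in\Gamma_{j+q}$ into $\Gamma_j$, so the deformation gives $c_j\le\bar c-\varepsilon$, which contradicts $c_j>\bar c-\varepsilon$; it does not give $c_{j+q}\le\bar c-\varepsilon$ as you wrote. The accumulating levels need no extra care, because the (PS)-based deformation lemma around $K_{\bar c}$ already controls all levels in $[\bar c-\varepsilon,\bar c+\varepsilon]$ off $N$.
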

		From $(V_1)$, we set $Y:=X^-$ and $ Z:=X^+$, then $X=Y \oplus Z$. Moreover, there exists a positive constant $\lambda\in (0,1)$ such that
	\begin{equation*} 
		Q(u) \ge \lambda\|u\|^2, \quad \text{for all } u \in Z.
	\end{equation*}  
	\begin{lem}\label{lem6.2}
		Suppose that $(V_1)$, $(f_1)$ and $(f_2)$ hold,
		\item[(i)] there are constants $\rho,\alpha > 0$, such that $J|_{\partial B_\rho \cap Z} \ge \alpha$;
		\item[(ii)]let $W$ be a finite dimensional subspace of $X$, then $J$ is anti-coercive on $W$.
	\end{lem}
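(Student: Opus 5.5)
For part (i) I would work on $Z=X^+$, where $u^-=0$ and hence $Q(u)=\|u\|^2$ in the adapted norm; if one prefers the original $H^1$ norm, the constant $\lambda$ above plays the same role. The term $\frac14\int\phi_u u^2$ is nonnegative by Lemma \ref{th2.3}(ii), so it can simply be dropped. For the nonlinear term, \eqref{f3.01} gives, for every $\varepsilon>0$,
\[
\int F(x,u)\le \frac{\varepsilon}{2}\|u\|_2^2+\frac{C_\varepsilon}{6}\|u\|_6^6\le C\varepsilon\|u\|^2+C C_\varepsilon\|u\|^6 .
\]
Hence for $u\in Z$,
\[
J(u)\ge \Big(\tfrac{1}{2}-C\varepsilon\Big)\|u\|^2-CC_\varepsilon\|u\|^6 .
\]
I would choose $\varepsilon$ with $C\varepsilon\le\frac14$ and then take $\rho$ small enough that $CC_\varepsilon\rho^4\le\frac18$. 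This gives $J\ge\alpha:=\rho^2/8>0$ on $\partial B_\rho\cap Z$. This step is routine and is essentially the local linking computation already done in the proof of Theorem \ref{th1.1}.

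For part (ii), I would argue by contradiction. Suppose there are $u_n\in W$ with $\|u_n\|\to\infty$ and $J(u_n)\ge -C_0$ for some constant $C_0$. Set $v_n=u_n/\|u_n\|$. Since $\dim W<\infty$, after passing to a subsequence $v_n\to v$ strongly in $W$ with $\|v\|=1$, so the set $A=\{v\neq0\}$ has positive measure. Dividing $J(u_n)$ by $\|u_n\|^4$ gives
\[
\frac{J(u_n)}{\|u_n\|^4}\le \frac{1}{2\|u_n\|^2}\big(\|v_n^+\|^2-\|v_n^-\|^2\big)+\frac{a_0}{4}-\int_A\frac{F(x,u_n)}{u_n^4}v_n^4 ,
\]
where Lemma \ref{lem3.1} bounds the nonlocal term and the positivity $F>0$ from $(f_2)$ allows restriction to $A$. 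On $A$ we have $|u_n(x)|=\|u_n\||v_n(x)|\to\infty$ almost everywhere, so \eqref{eq:5} together with Fatou's lemma, exactly as in \eqref{f3.2}, makes the last integral tend to $+\infty$. The right-hand side therefore tends to $-\infty$, while the left-hand side is at least $-C_0/\|u_n\|^4\to0$. This is a contradiction, so $J(u)\to-\infty$ as $\|u\|\to\infty$ in $W$, which is the claimed anti-coercivity.

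The main obstacle is the quartic nonlocal term $\frac14\int\phi_uu^2$. It grows like $\|u\|^4$, so the natural normalisation is by $\|u\|^4$ rather than $\|u\|^2$, and we need the $4$-superlinear growth of $F$ to beat the constant $a_0/4$. This is exactly what \eqref{eq:5} supplies. Beyond that, the only subtle points are using strong convergence in finite dimensions to obtain almost-everywhere convergence of $v_n$, and the positivity of $F$ so that Fatou's lemma applies on $A$. Anti-coercivity on $W$ then gives hypothesis (ii) of Proposition \ref{thm6.1} immediately, with $R(W)$ chosen so that $J<0$ outside $B_{R(W)}$.
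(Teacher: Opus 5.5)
Your proposal is correct and follows essentially the same route as the paper. In (i) you drop the nonnegative term $\frac14\int\phi_u u^2$ and use \eqref{f3.01} with $\rho$ small. In (ii) you normalize in the finite-dimensional space $W$, bound the quartic nonlocal term by Lemma \ref{lem3.1}, and use \eqref{eq:5} with Fatou's lemma to get $\int F(x,u_n)/\|u_n\|^4\to+\infty$. Arguing (ii) by contradiction rather than by the paper's direct estimate is only a cosmetic difference, and it handles the passage to subsequences cleanly.
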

	
	\begin{proof}
		\item[(i)]
		Let $u \in Z$ and $\|u\|=\rho$.
		Then by \eqref{f3.01}
		\begin{align*}
			J(u) &= \frac{1}{2}Q(u) + \frac{1}{4}\int_{\mathbb{R}^3} \phi_u u^2 \,\mathrm{d}x - \int_{\mathbb{R}^3} F(x,u) \,\mathrm{d}x \\
			&\geq  \frac{\lambda}{2} \|u\|^2 - \frac{1}{2}\varepsilon \|u\|_2^2 - \frac{1}{6}C_\varepsilon \|u\|_6^6 \\
			&\geq \left( \frac{\lambda}{2}-\frac{1}{2}\varepsilon C_2^2-\frac{1}{6}C_6^6 C_\varepsilon\|u\|^4\right)\|u\|^2.
		\end{align*}
		We choose $\varepsilon=\frac{\lambda}{2C_2^2} > 0$ such that $\frac{\lambda}{2}-\frac{1}{2}\varepsilon C_2^2=\frac{\lambda}{4}>0$ and let $\frac{1}{6}C_6^6 C_\varepsilon=C$. Then for sufficiently small $\rho$, there exists $\alpha$ such that
		\[
		J(u)\ge\left( \frac{\lambda}{4}-C\rho^4\right)\rho^2\ge \alpha,
		\]for any $u \in \partial B_\rho \cap Z $.
		\item[(ii)] Suppose that $\{u_n\} \subset W$ and $\|u_n\| \to \infty$, let $h_n=\|u_n\|^{-1}u_n$, then $ \|h_n\|=1$. Thus, $\{h_n\}$ is bounded. Because of $\text{dim} W<\infty$, there exists $h \in W\setminus \{0\}$, such that
		\[
		h_n \to h \in  W\setminus \{0\} \quad \text{and} \quad h_n(x) \to h(x)\quad \text{a.e. in} \quad \mathbb{R}^3.
		\]
		Similar to \eqref{f3.2}, we obtain
		\begin{equation}\label{6.2}
			\int\frac{F(x, u_n(x))}{\|u_n\|^4} \to +\infty.
		\end{equation}
		Combining \eqref{6.2} and \eqref{f3.11}, we have
		\begin{align*}
			J(u_n)&= \frac{1}{2}Q(u_n) + \frac{1}{4}\int \phi_{u_n} u_n^2 - \int_{\mathbb{R}^3} F(x,u_n) \\
			&\le \|u_n\|^4\left\{\frac{1}{2\|u_n\|^2}+\frac{a_0}{4}-\int\frac{F(x, u_n(x))}{\|u_n\|^4}\right\} \to -\infty.
		\end{align*}
		The proof of the lemma is complete.
	\end{proof}
	\noindent
	\textbf{Proof of Theorem \ref{th1.3}} By Lemma \ref{lem6.2}, parts (i) and (ii) of Proposition \ref{thm6.1} hold. Moreover, since $f(x,u)$ is odd, the functional $J \in C^1(X,\mathbb{R})$ is even. And Lemma \ref{lem3.5} guarantees that $J$ satisfies $(PS)$ condition. Consequently, by Proposition \ref{thm6.1}, there exists a sequence of solutions $(u_n, \phi_{u_n}) \in X \times \mathcal{D}$ such that each $u_n$ is a critical point of $J$ and
	\[
	J(u_n) \to +\infty \quad \text{as } n \to \infty.
	\]
	Moreover, $\phi_{u_n}$ is the unique solution of the second equation in \eqref{eq:4} corresponding to $u_n$.\qed
\section*{Acknowledgments} 
L. F. Yin was supported by the Natural Science Foundation of Sichuan Province, China (No. 2024NSFSC1343) and
National Natural Science Foundation of China (No.124\\01140). F. Wang was supported by the Fundamental Research Funds for the Central Universities(2682026ZTPY010), China.

\section*{Author contributions}
All authors contributed equally in writing, analyzing, and reviewing this manuscript.

\section*{Data availability}
No data was used for the research described in the article.

\section*{Conflict of interest}
The authors state no conflict of interest.

	\providecommand{\href}[2]{#2}
	\providecommand{\arxiv}[1]{\href{http://arxiv.org/abs/#1}{arXiv:#1}}
	\providecommand{\url}[1]{\texttt{#1}}
	\providecommand{\urlprefix}{URL }


\begin{thebibliography}{1}
		
		\bibitem{MR0345031}
		Ambrosetti,~A., Rabinowitz,~P. H. :
		\newblock Dual variational methods in critical point theory and applications,
		\newblock \emph{J. Funct. Anal.}, \textbf{14},  349-381 (1973).
		
		\bibitem{MR3998534}
		\newblock Bonin,~C. A., Pimentel,~B. M., Ortega,~P.~H. :
		\newblock Multipole expansion in generalized electrodynamics,
		\newblock \emph{Internat. J. Modern Phys. A}, \textbf{34}, 1950134 (2019).
		
		\bibitem{Brezis1978}
		\newblock Brezis,~H., Kato,~T. :
		\newblock Remarks on the Schroedinger operator with singular complex potentials. Technical summary report,
		\newblock \textbf{}(1978).
		
		\bibitem{MR3689587}
		\newblock Bertin,~M. C., Pimentel,~B. M., Valc\'arcel,~C. E., Zambrano,~G.~E.~R. :
		\newblock Hamilton-{J}acobi formalism for {P}odolsky's electromagnetic theory on the null-plane, 
		\newblock \emph{J. Math. Phys.}, \textbf{58}, 082902 (2017).
		
		\bibitem{PhysRevD.90.085012}
		\newblock Bufalo,~R., Pimentel,~B. M., Soto,~ D. E. :
		\newblock Causal approach for the electron-positron scattering in generalized quantum electrodynamics,
		\newblock \emph{Phys. Rev. D}, \textbf{90}, 085012 (2014).
		
		\bibitem{MR3707699}
		\newblock Bufalo,~R., Pimentel,~B. M., Soto,~ D. E. :
		\newblock Normalizability analysis of the generalized quantum electrodynamics from the causal point of view,
		\newblock \emph{Int. J. Mod. Phys. A}, \textbf{32}, 1750165 (2017).
		
		\bibitem{MR1420790}
		\newblock Bartsch,~T., Li,~S. J. :
		\newblock Critical point theory for asymptotically quadratic functionals and
		applications to problems with resonance,
		\newblock \emph{Nonlinear Anal.}, \textbf{28}, 419--441 (1997).
		
		\bibitem{Bartsch2004}
		\newblock Bartsch,~T., Liu,~Z. L., Weth,~T. :
		\newblock Sign changing solutions of superlinear {S}chr\"odinger equations,
		\newblock \emph{Comm. Partial Differential Equations}, \textbf{29}, 25--42 (2004).
		
		\bibitem{MR4119256}
		Chen,~H. Y., Liu,~S. B. :
		\newblock Standing waves with large frequency for 4-superlinear {S}chr\"odinger--{P}oisson systems,
		\newblock \emph{Ann. Mat. Pura Appl. (4)}, \textbf{194}, 43--53 (2015).
		\newblock 
		
		\bibitem{MR1196690}
		\newblock Chang,~K. C. :
		\newblock Infinite-dimensional {M}orse theory and multiple solution problems,
		\newblock\emph{Progress in Nonlinear Differential Equations and their
			Applications}, \textbf{6}
		\newblock Birkh\"{a}user Boston, Inc., Boston, MA, (1993).
		
		\bibitem{ChenLiRadulescuTang2021}
		\newblock Chen,~S. T., Li,~L., R\u{a}dulescu,~V. D., Tang,~X. H. :
		\newblock Ground state solutions of the non-autonomous Schr\"odinger--Bopp--Podolsky system,
		\newblock \emph{Anal. Math. Phys.} \textbf{12}, 27 (2022).
		
		\bibitem{MR4050783}
		\newblock Chen,~S. T., Tang,~X. H. :
		\newblock On the critical Schr\"odinger--Bopp--Podolsky system with general nonlinearities,
		\newblock \emph{Nonlinear Anal.} \textbf{195} , 111734 (2020).
		
		\bibitem{MR5014952}
		\newblock Du,~Y. Q., Su,~Y. :
		\newblock Ground state solution of {S}chr\"odinger--{B}opp--{P}odolsky system in the mass subcritical case, 
		\newblock \emph{Appl. Math. Lett.},\textbf{176}, 109870 (2026).
		
		\bibitem{MR3957980}
		\newblock D'Avenia,~P., Siciliano,~G. :
		\newblock Nonlinear {S}chr\"odinger equation in the {B}opp-{P}odolsky
		electrodynamics: solutions in the electrostatic case,
		\newblock \emph{J. Differential Equations}, \textbf{267}, 1025--1065 (2019). 
		
		\bibitem{MR4626835}
		\newblock Figueiredo,~G. M., Siciliano,~G. : 
		\newblock Multiple solutions for a {S}chr{\"o}dinger-{B}opp-{P}odolsky system with positive potentials, 
		\newblock \emph{Math. Nachr.}, \textbf{296}, 2332--2351 (2023).
		
		\bibitem{Gilbarg2001}
		\newblock Gilbarg,~D., Trudinger,~N. S. :
		\newblock Elliptic partial differential equations of second order, reprint of the 1998th edn.
		\newblock \emph{Classics in Mathematics}, Springer-Verlag, Berlin (2001).
		
		\bibitem{MR4889191}
		\newblock Huang,~J., Wang,~S. :
		\newblock Normalized ground states for the mass supercritical Schr\"odinger--Bopp--Podolsky system: existence, uniqueness, limit behavior, strong instability,
		\newblock \emph{J. Differential Equations}, \textbf{437}, 113282 (2025).
		
		\bibitem{Jiang2023}
		\newblock Jiang,~S., Liu,~S. B. :
		\newblock Standing waves for 6-superlinear {C}hern-{S}imons-{S}chr\"odinger
		systems with indefinite potentials,
		\newblock \emph{Nonlinear Anal.}, \textbf{230}, 113234 (2023).
		
		\bibitem{Kato1972}
		\newblock Kato,~T. :
		\newblock Schr{\"o}dinger operators with singular potentials,
		\newblock \emph{Israel J. Math.}, \textbf{13}, 135–148 (1972).
		
		\bibitem{Liu2022}
		\newblock Liu,~C. :
		\newblock Existence and stability of standing waves with prescribed $L^2$-norm for a class of Schrödinger-Bopp-Podolsky system, 
		\newblock \emph{J. Appl. Math. Phys.}, \textbf{10}, 2245–2267 (2022).
		
		\bibitem{LiebLoss2001}
		\newblock Lieb,~E., Loss,~M. :
		\newblock Analysis,
		\newblock \emph{ Grad. Stud. Math.} \textbf{14}, American Mathematical Society, Providence, RI (2001).
		
		\bibitem{1990Some}
		\newblock Li,~G. B. :
		\newblock Some properties of weak solutions of nonlinear scalar field equations,
		\newblock \emph{Ann. Fenn. Math.}, \textbf{15}, 25--36 (1990).
		
		\bibitem{MR1110119}
		\newblock Liu,~J. Q. :
		\newblock The {M}orse index of a saddle point,
		\newblock \emph{Systems Sci. Math. Sci.}, \textbf{2}, 32--39 (1989).
		
		\bibitem{MR4081540}
		\newblock Liu,~S. B., Mosconi,~S.:
		\newblock On the Schr\"odinger--Poisson system with indefinite potential and 3-sublinear nonlinearity,
		\newblock \emph{J. Differential Equations} \textbf{269}, 689--712 (2020).
		
		\bibitem{MR3656292}
		\newblock Liu,~S. B., Wu,~Y. :
		\newblock Standing waves for 4-superlinear {S}chr\"odinger-{P}oisson systems with indefinite potentials, 
		\newblock \emph{Bull. Lond. Math. Soc.}, \textbf{49}, 226--234 (2017).
		
		\bibitem{LI19956} 
		\newblock Li,~S. J., Willem,~M. : 
		\newblock Applications of {L}ocal {L}inking to {C}ritical {P}oint {T}heory, 
		\newblock \emph{J. Math. Anal. Appl.}, \textbf{189}, 6--32(1995).
		
		\bibitem{MR982267}
		\newblock Mawhin,~J., Willem,~M. :
		\newblock Critical point theory and {H}amiltonian systems,
		\newblock \emph {Appl. Math. Sci.}, 
		\textbf{74}
		\newblock Springer-Verlag, New York, 1989.
		
		\bibitem{Moroz2013}
		\newblock Moroz,~V., Van Schaftingen,~J. :
		\newblock Groundstates of nonlinear Choquard equations: Existence, qualitative properties and decay asymptotics,
		\newblock \emph{Journal of Functional Analysis}, \textbf{265}, 153--184 (2013).
		
		\bibitem{Rabinowitz1984}
		\newblock Rabinowitz,~P. H. : 
		\newblock Minimax methods and their application to partial differential equations,
		\newblock in \emph{Seminar on Nonlinear Partial Differential Equations (Berkeley, Calif., 1983)},
		\newblock Math. Sci. Res. Inst. Publ., Springer, New York, 307--320 (1984).
		
		\bibitem{MR4119258}
		\newblock Siciliano,~G., Silva,~K. :
		\newblock The fibering method approach for a non-linear {S}chr\"odinger equation coupled with the electromagnetic field,
		\newblock \emph{Publ. Mat.}, \textbf{64}, 373--390 (2020).
		
		\bibitem{MR4982514}
		\newblock Siciliano,~G. :
		\newblock Normalized solutions for {S}chr{\"o}dinger-{B}opp-{P}odolsky systems in bounded domains,
		\newblock \emph{Commun. Math.}, \textbf{34}, 3 (2026).
		
		\bibitem{MR4198483}
		\newblock Teng,~K. M., Yan,~Y. X. :
		\newblock Existence of a positive bound state solution for the nonlinear {S}chr\"odinger-{B}opp-{P}odolsky system,
		\newblock \emph{Electron. J. Qual. Theory Differ. Equ.}, \textbf{}19 (2021).
		
		\bibitem{MR4143621}
		\newblock Xia,~J.~K., Wang,~Z. Q. :
		\newblock Saddle solutions for the Choquard equation,
		\newblock \emph{Calc. Var. Partial Differential Equations}, \textbf{58}, 85 (2019).
		
		\bibitem{MR4143620}
		\newblock Yang,~J., Chen,~H. B., Liu,~S. L. :
		\newblock The existence of nontrivial solution of a class of {S}chr\"odinger-{B}opp-{P}odolsky system with critical growth,
		\newblock \emph{Bound. Value Probl.}, \textbf{}144 (2020).
	
		\bibitem{MR4519835} 
		\newblock Yang,~H., Yuan,~Y. X., Liu,~J. : 
		\newblock On nonlinear {S}chr{\"o}dinger-{B}opp-{P}odolsky system with asymptotically periodic potentials, 
		\newblock \emph{J. Funct. Spaces}, \textbf{}, 9287998 (2022). 
		
		\bibitem{MR4944207}
		\newblock Zheng,~H. N., Li,~L., Chen,~S. J., O'Regan,~D. :
		\newblock Ground state sign-changing solutions for asymptotically cubic or super-cubic Schr\"odinger--Bopp--Podolsky systems,
		\newblock \emph{Rend. Circ. Mat. Palermo (2)} \textbf{74}, 28 (2025).
		
		\bibitem{MR4217993} 
		\newblock Zhu,~ Y. T., Chen,~C. F., Chen,~J.~H. : 
		\newblock The {S}chr{\"o}dinger-{B}opp-{P}odolsky equation under the effect of nonlinearities, 
		\newblock \emph{Bull. Malays. Math. Sci. Soc.}, \textbf{44}, 953--980 (2021). 
		
		
	\end{thebibliography}
\end{document}